%
%
%
%
%
%

\documentclass[11pt]{article}
\usepackage{amsmath, amsfonts, amsthm, amssymb,sectsty,latexsym,dsfont,textcomp}
\usepackage{graphics}
\usepackage{fullpage}
\usepackage{color}
\usepackage{epsfig,psfrag}
\usepackage[]{pstricks,pst-node,pst-text,pst-3d}
\usepackage{courier}


\newtheorem{theorem}{Theorem}[section]
\newtheorem{proposition}[theorem]{Proposition}

\newtheorem{lemma}[theorem]{Lemma}
\newtheorem{remark}[theorem]{Remark}

\def\del{\partial}
\newcommand{\T}{\top}
\newcommand{\Id}{{\bf I}}

\newcommand{\RR}{\mathbb{R}}
\newcommand{\eps}{\varepsilon}

\newcommand{\cD}{\mathcal{D}}
\newcommand{\mddplus}[1]{M^{#1\times #1}_+}
\newcommand{\SP}{\hspace{1pt}}


\numberwithin{equation}{section}

\newcommand{\avar}{\dot{\varphi}}
\newcommand{\bvar}{\frac{\varphi}{s}}
\newcommand{\bvart}{\tfrac{\varphi}{s}}
\newcommand{\bvarp}[1]{\frac{\varphi(#1)}{#1}}

\title{On the construction and properties of weak solutions describing dynamic cavitation
       \thanks{Research partially supported by the EU FP7-REGPOT project "Archimedes Center for
Modeling, Analysis and Computation", the "Aristeia"  program of the Greek Secretariat of Research,
and the EU EST-project "Differential Equations and Applications in Science and
Engineering". Part of this work was completed at the Institute of Applied and Computational Mathematics, FORTH, Greece.} \\{\footnotesize
(In: {\it Journal of Elasticity (2015), 118-2, 141-185,} DOI: 10.1007/s10659-014-9488-z)}
}

\author{Alexey Miroshnikov\thanks
{Department of Mathematics and Statistics, University of
Massachusetts Amherst, USA. amiroshn@gmail.com } \; and \, Athanasios E. Tzavaras
\thanks{Division of Computer, Electrical, Mathematical Sciences \& Engineering,
King Abdullah University of Science and Technology (KAUST), Thuwal, Saudi Arabia.  athanasios.tzavaras@kaust.edu.sa}}

\date{}

\begin{document}

\maketitle

\begin{abstract}
\noindent

We consider the problem of dynamic cavity formation in isotropic compressible nonlinear elastic media. For the equations
of radial elasticity we construct self-similar weak solutions that describe a cavity emanating from a state of uniform deformation.
For dimensions  $d =2, 3$ we show that cavity formation is necessarily associated with a unique precursor shock.
We also study the bifurcation diagram and do a detailed analysis of the singular asymptotics associated to cavity initiation
as a function of the cavity speed of the self-similar profiles. We show that
for stress free cavities the critical stretching associated with dynamically cavitating solutions coincides with the critical stretching in the bifurcation diagram of equilibrium elasticity. Our analysis treats both stress-free cavities and cavities with contents.\\
\end{abstract}



%
%

{\noindent {\bf Keywords}: Cavitation, Shock wave, Polyconvex elasticity }\\

\noindent {{\bf Mathematics Subject Classification}: 35L67, 35L70, 74B20, 74H20, 74H60}

\section{Introduction}

The motion of a continuous medium with nonlinear elastic response
is described by the system of partial differential equations
\begin{align}
&y_{tt} - \mathrm{div}  \frac{\del W}{\del F} (\nabla{y}) = 0 \label{MPDE}
\end{align}
where $y  : \RR^d \times \RR_{+} \to{\RR}^d$ stands for the motion, $F=\nabla y$ is the deformation gradient,
and we have employed the constitutive theory of  hyperelasticity,  $S = \frac{\del W}{\del F}(F)$,  that the
Piola-Kirchhoff stress $S$ is given as the gradient  of a stored energy function
\begin{equation*}
W \, : \, M^{d \times d}_{+}:=\{\,F\in \RR^{d\times d} : \det (F) >0 \,\} \, \longrightarrow \, \RR\,.
\end{equation*}

For isotropic elastic materials the stored energy reads  $W(F) = \Phi(v_1,v_2,\dots,v_d)$\,,
where $\Phi$ is a symmetric function of the eigenvalues $v_1,\dots,v_d$ of the positive square root $( F^{\T} F)^{\frac{1}{2}}$; see \cite{Antman,Tr}.
 In that case \eqref{MPDE} admits solutions that are radially symmetric motions,
\begin{equation*}
  y(x,t) = w(|x|,t)\frac{x}{|x|} \, , \quad R = |x| \, ,
\end{equation*}
and  are generated by solving for the amplitude $w:\RR_+ \times \RR_{+} \to \RR_+$
the scalar second-order equation
\begin{equation}\label{RPDEINTRO}
\begin{aligned}
       &w_{tt}
        =\frac{1}{R^{d-1}} \frac{\del}{\del R}\bigg(R^{d-1} \frac{\del\Phi}{\del v_1} \big ( w_R,\frac{w}{R},\dots,\frac{w}{R} \big )\bigg)
         -\frac{1}{R}(d-1)\frac{\del \Phi}{\del v_2} \big (w_R,\frac{w}{R},\dots,\frac{w}{R} \big )  \, .
\end{aligned}
\end{equation}
This equation admits  the special solution $w_h (R,t)=\lambda R$ corresponding to a homogeneous deformation of stretching $\lambda > 0$.
The question was posed \cite{Ball82} if discontinuous solutions of  \eqref{RPDEINTRO} can be constructed and it has
been tied to a possible explanation of the phenomenon of cavitation in stretched rubbers \cite{GL58, GT69}.

Ball \cite{Ball82} in a seminal paper proposed to use continuum mechanics for modeling cavitation
and used methods of the calculus of variations and bifurcation theory to construct cavitating solutions for the equilibrium
version of  \eqref{RPDEINTRO}:  There is a critical stretching $\lambda_{cr}$ such that for $\lambda < \lambda_{cr}$ the homogeneous deformation is
the only minimizer of the elastic stored energy; by contrast,  for $\lambda > \lambda_{cr}$ there exist nontrivial equilibria corresponding
to a (stress-free) cavity in the material with energy less than the energy of the homogenous deformation \cite{Ball82}.
We refer to \cite{MSP, SSP00, SSP08, LP09, MS11, MC14} and references therein for developments concerning
equilibrium or quasistatic cavitating solutions.

In an important development, K.A. Pericak-Spector and S. Spector \cite{Sp,Sp2}  use the self-similar
{\it ansatz}
\begin{equation}\label{SSIM}
  w(R,t) = t \SP \varphi(\tfrac{R}{t})
\end{equation}
to construct a weak solution for the dynamic problem \eqref{RPDEINTRO} that corresponds to  a
spherical cavity emerging at time $t = 0$ from a homogeneously deformed state. The cavitating solution
is constructed in dimension $d \geq 3$ for special classes of polyconvex energies \cite{Sp,Sp2} and sufficiently large initial stretching.
Remarkably,  the cavitating solution has lower mechanical energy than the associated homogeneously deformed state from where it emerges \cite{Sp},
and thus provides a striking example of nonuniqueness of entropy weak solutions (for polyconvex energies). {The dynamic cavitation problem is a little studied subject. Apart from \cite{Sp,Sp2}, there is an interesting almost explicit example of a dynamic
solution that oscillates constructed by Chou-Wang and Horgan \cite{CH89} for the dead load problem of an incompressible elastic material.
Due to the incompressibility constraint the response is markedly different from the compressible case: beyond a critical load
a cavity opens and then closes again, see \cite{CH89}.
The reader is referred  to Choksi \cite{CH97} for a discussion of the limit from compressible to incompressible response
in radial elasticity, and to Hilgers \cite{Hilgers12} for other examples of non-uniqueness in multi-dimensional hyperbolic conservation laws
due to radial point singularities.
}

The objective of the present work is to complement \cite{Sp,Sp2} by establishing various further properties of weak solutions describing dynamic
cavitation.
First, we prove that cavity formation is always associated with
a precursor shock, namely it is not possible to construct a cavitating solution that connects "smoothly" to a
uniformly deformed state.
Second, we study the bifurcation diagram for dynamically cavitating solution and provide a formula that determines
the critical stretch required for opening a cavity. The critical stretch turns out (for traction free cavities) to be the same as that predicted from
the equilibrium cavitation analysis of Ball \cite{Ball82}. In a companion paper \cite{GT14} we reassess the issue of nonuniqueness of weak solutions,
and show that local averaging of the cavitating weak solution contributes a surface energy when opening a cavity that renders the uniform deformation
the energetically preferred solution, see \cite{GT14} for details and comments on the ramifications.

We now provide an outline of the technical contents of the article:
Throughout we work with stored energies of the form
\begin{equation}\tag{H0}\label{spseis}
\begin{aligned}
&\Phi (v_1, v_2, ... , v_d) =  \sum_{i =1}^d g(v_i) + h (v_1 v_2
\dots v_d)
\end{aligned}
\end{equation}
where $ g(x)\in C^3[0,\infty), \;\; h(x)\in C^3(0,\infty)$ satisfy
\begin{align}
& g''(x)  > 0,  \quad  h''(x)  > 0   \, , \quad
 \lim_{x \to \infty} h(x) = +\infty
 \label{GHPROP1}\tag{H1}
 \\
 &g'''(x) \leq  0,  \quad h'''(x) < 0 \,.\label{GHPROP2} \tag{H2}
\end{align}
Hypothesis \eqref{GHPROP1} refers to polyconvexity \cite{Ball82}, while \eqref{GHPROP2} indicates elasticity with softening.
{
The reader is referred to Appendix \ref{app1} where properties of isotropic stored energies are reviewed.
}
\par\smallskip

Following \cite{Sp,Sp2}, we introduce the self-similar {\it ansatz} \eqref{SSIM} and the problem of cavity formation becomes to find
solutions of the problem
\begin{align}
\label{SSIMODE}
\bigg(s^2 - \frac{\del^2\Phi}{\del v_1^2 }\bigg)  \SP \ddot{\varphi}
&\;= \frac{d-1}{s}
\bigg [(\dot\varphi - \frac{\varphi}{s}) \frac{\del^2 \Phi}{\del v_1 \del v_2} + \frac{\del\Phi}{\del v_1} - \frac{\del \Phi}{\del v_2}\bigg]
\\
\label{initcav}
\varphi_0 &:=  \lim_{s\to 0_+}\varphi(s)   > 0
\end{align}
and to check whether such solutions can be connected to a uniformly deformed state, namely
\begin{align}
\label{SSIMTRIV}
\varphi(s) &= \lambda s \;\;\, \mbox{for} \;\; s> \sigma.
\end{align}
Here, $\varphi_0>0$ represents the speed of the cavity surface, $\lambda > 0$ the stretching of the (initially) uniform deformation and $\sigma$
is the shock speed.
We remark that  \eqref{SSIMODE} with \eqref{SSIMTRIV} admit the special solution $\bar{\varphi}=\lambda s$ corresponding
to a homogenous deformation ${y_h} (x) = \lambda x$; therefore, according to this scenario,   cavity formation
is associated  to nonuniqueness for the initial value problem of  the radial elasticity equation \eqref{RPDEINTRO}.

To make the problem \eqref{SSIMODE}-\eqref{initcav} determinate it is necessary to specify the value of the radial component of the Cauchy stress $\SP T_{rad}(0)\SP$ at the cavity surface. Two types of boundary conditions are pursued (see Section \ref{secCauchy}) corresponding to stress-free cavities
or to a cavity with content:
\begin{equation}
\label{CSTRESSINTRO}
\begin{aligned}
 \mbox{either} \quad  T_{rad}(0)&=0 && \Leftrightarrow \quad \lim_{s\to 0} \SP \dot{\varphi}\Big(\frac{\varphi}{s}\Big)^{d-1} \SP = \SP h'^{-1}(0):=H \quad \mbox{(stress-free cavity)}\\
 \mbox{or}  \quad  T_{rad}(0)&=G(\varphi_0) && \Leftrightarrow \quad \lim_{s\to 0} \SP \dot{\varphi}\Big(\frac{\varphi}{s}\Big)^{d-1} \SP = \SP h'^{-1}(G(\varphi_0)) \quad \mbox{(cavity with content)}\,.
\end{aligned}
\end{equation}
Under the growth condition $|g'(x)| \le C |x|^{d-2}$ (see  \eqref{GGROWTH} in section \ref{seccavi}) and for dimension $d \ge 2$,
the problem \eqref{SSIMODE}, \eqref{initcav} and \eqref{CSTRESSINTRO}
is desingularized at the origin and a solution $\varphi(s)$ is constructed (see Theorem \ref{CAVSOLEXISTSP}).
The question  arises whether this cavitating solution can be connected to the uniform deformation \eqref{SSIMTRIV} through a shock
(or through a sonic singularity). This leads to studying the algebraic equation
\begin{equation}
\label{rhintro}
\sigma = \left .  \sqrt{ \frac{\frac{\del \Phi}{\del v_1}( \dot \varphi, \frac{\varphi}{s}, ... , \frac{\varphi}{s} )-
    \frac{\del \Phi}{\del v_1}(\frac{\varphi}{s}, \frac{\varphi}{s}, ... , \frac{\varphi}{s})}{\dot \varphi -  \frac{\varphi}{s} } } \; \;  \right  |_{s = \sigma}
\end{equation}
which manifests the Rankine-Hugoniot jump condition. In Theorems \ref{EXISTCPT} and \ref{UNIQCPT} we show there
exists a unique $\sigma$ where the connection can be effected, and that the connection either happens  through a Lax shock or
through a sonic singularity ({\it i.e.} a point where the coefficient $(s^2 - \Phi_{11})$  in \eqref{SSIMODE} vanishes).
Then, in Theorem \ref{MAINRESULTD3}, we restrict to dimensions $d=2,3$ and exclude the possibility of a connection through a sonic singularity.
{
Our analysis is inspired and extends the results of \cite{Sp, Sp2} where  cavitating solutions are constructed for sufficiently large stretchings
$\lambda$.
In particular, we  show that, for dimensions $d=2,3$, it is impossible to connect a cavitating solution smoothly to a uniformly deformed state and thus
any cavitating solution is associated with a precursor shock.
}

The next objective is to study the bifurcation diagram of the  cavitating weak solution and determine
the critical stretching for dynamic cavitation. The bifurcation diagram is visualized as follows:  The boundary condition \eqref{CSTRESSINTRO}
is expressed for the specific volume $v(s) : = \det F = \dot \varphi \Big(\frac{\varphi(s)}{s}\Big)^{d-1}$ in  the general form $v(0)= V(\varphi_0)$. Given the cavity speed $\varphi_0 > 0$,
let $(\varphi,v)(s \SP; \varphi_0 , V(\varphi_0))$ be the  cavitating solution  (constructed in Section \ref{seccavi}) emanating from data $\varphi_0$,  $v_0 = V(\varphi_0)$.
 Denoting by  $\sigma = \sigma(\varphi_0 , V(\varphi_0))$ the connection point,  the associated stretching defines the map
\begin{equation}
\label{mapintro}
\varphi_0 \longmapsto \Lambda(\varphi_0 , V(\varphi_0) )   \quad \mbox{ where } \;  \;
\Lambda(\varphi_0 , V(\varphi_0) ) := \frac{\varphi \big ( \sigma \SP; \varphi_0 , V(\varphi_0) \big)}{\sigma } \, ,
\end{equation}
which is precisely the dynamic bifurcation diagram (see Fig. \ref{DYNSTAT} for a numerical computation of this map).
The limit $\lim_{\varphi_0 \to 0+} \Lambda(\varphi_0 , V(\varphi_0))$ will determine the critical stretching.
{The technique of recovering the bifurcation point $\lambda_{cr}$  by computing the cavitating solution and sending the inner radius of the cavity to zero
 is espoused in \cite{MS08}, where the authors use it to devise a numerical scheme for computing $\lambda_{cr}$ in equilibrium elasticity. }

To understand the limiting behavior of cavitating solutions as $\varphi_0 \to 0$, we introduce the rescaling
\begin{equation}
\label{rescalingintro}
\begin{aligned}
    \psi(\xi; \varphi_0 , V(\varphi_0) ) := \frac{\varphi \big(\varphi_0\xi\SP; \varphi_0 , V(\varphi_0) \big)}{\varphi_0},\quad
    \delta(\xi; \varphi_0 , V(\varphi_0) ) := v \big ( \varphi_0\xi\SP; \varphi_0 , V(\varphi_0) \big )\,
\end{aligned}
\end{equation}
which captures the inner asymptotics of the cavitating solution $(\varphi, v)$ to \eqref{SSIMODE}-\eqref{CSTRESSINTRO}.
Rescalings have been useful in the study of cavitation for equilibrium elasticity \cite{Ball82} and will play an instrumental
role in determining  the critical stretching for dynamic cavitation.

 It is proved
in Proposition \ref{proprescl}  that the rescaled solutions converge to a limiting profile,
\begin{equation*}
(\psi,\delta)(\xi\SP ; \varphi_0, V(\varphi_0))  \; \to \;  (\psi_0,\delta_0)(\xi\SP ; V(0)) \, , \quad \mbox{as} \; \;  \varphi_0  \to 0_+ \, ,
\end{equation*}
uniformly on compact subsets of $[0, \infty)$.

The limiting profile $(\psi_0 (\xi), \delta_0(\xi))$, where $\delta_0 = \psi'_0 \big ( \frac{\psi_0}{\xi} \big )^{d-1}$, is defined on $[0,\infty)$ and solves
the initial value problem
\begin{equation}\label{LIMODEINTRO}
\begin{aligned}
-  \frac{ \del^2 \Phi }{\del v_1^2}   ({\psi_0'},   \tfrac{\psi_0}{\xi}, ... ,  \tfrac{\psi_0}{\xi})  \, \psi_0'' &=
\frac{d-1}{\xi}\Big({\psi_0'} - \frac{\psi_0}{\xi}\Big)
\bigg [ \frac{\del^2 \Phi}{\del v_1 \del v_2}
+   \frac{ \frac{\del\Phi}{\del v_1} - \frac{\del \Phi}{\del v_2} }{ {\psi_0'} - \tfrac{\psi_0}{\xi} } \bigg] ({\psi_0'},   \tfrac{\psi_0}{\xi}, ... ,  \tfrac{\psi_0}{\xi}) \, ,
\\
\psi_0 (0) &= 1 \, ,
\\
\delta_0 (0) &= V(0) \, .
\end{aligned}
\end{equation}
The solvability of \eqref{LIMODEINTRO} and properties of its solutions are discussed in Proposition \ref{LIMSOLLMM}, where it is in particular
shown that the (inner) solution is associated with a critical stretching at infinity
\begin{equation}
\label{infbeh}
\Lambda_0 (V(0)) := \lim_{\xi \to \infty} \frac{\psi_0 (\xi ; V(0) )}{\xi} \, .
\end{equation}
Equation \eqref{LIMODEINTRO}$_1$ is precisely the equation  describing cavitating solutions in equilibrium radial elasticity,
suggesting  that the critical stretch for dynamic cavitation and equilibrium cavitation might conceivably coincide. The critical stretch $\lambda_{cr}$
for cavitation in equilibrium radial elasticity is studied in \cite[Section 7.5]{Ball82} where various representation
formulas for $\lambda_{cr}$ are established. In section \ref{STATCONNECT} we pursue this analogy, we show that for a stress-free
 cavity $ \Lambda_0(H) = \lambda_{cr}$, and establish representation formulas for the
critical stretch and corresponding lower bounds.

Finally, in Theorem \ref{BIFPNTTHM}, we study  the behavior of the cavitating solution $\varphi (\cdot ; \varphi_0 , V(\varphi_0) )$ and the associated stretch
\eqref{mapintro}  as the cavity speed $\varphi_0 \to 0$. We establish that
\begin{equation*}
  \lim_{\varphi_0 \to 0_+} \Lambda(\varphi_0 , V(\varphi_0)) = \Lambda_0(V(0)) \, ,
\end{equation*}
where $\Lambda_0(V(0))$ is given by \eqref{infbeh},  that the speed and the strength of the precursor shock satisfy
\begin{equation*}
\begin{aligned}
&\lim_{\varphi_0\to 0_+} \sigma(\varphi_0 , V(\varphi_0) ) = \sqrt{\frac{\del^2\Phi}{\del v_1^2}\big(\Lambda_0(V(0)),\dots,\Lambda_0(V(0))\big)}
\\
&\lim_{\varphi_0\to 0_+} \Big[ \frac{\varphi}{s}-\dot{\varphi} \Big] (\sigma(\varphi_0 , V(\varphi_0)))=0\, ,
\end{aligned}
\end{equation*}
and that
$$
\varphi \big (s ; \varphi_0 , V(\varphi_0) \big ) \to  \Lambda_0(V(0)) s \, , \quad  \mbox{as  $\varphi_ 0 \to 0_+$}.
$$
Our analysis proves that the critical stretching for equilibrium and dynamic cavitation coincide.

The structure of the article is as follows: In Section \ref{sec2} we introduce the equations of radial elasticity for isotropic elastic materials.
In Section \ref{secselfsimilar} we derive the equations for self-similar solutions of radial elasticity, describe various special solutions, and
present the problem of cavitation. The analysis of Section \ref{secselfsimilar} follows the ideas and extends the analysis
 of \cite{Sp,Sp2} to  a more general class of (polyconvex) stored energies
and to boundary conditions of cavities with content. Section \ref{necshock}  and Section \ref{secbifurc} contain the main new results.
In Section  \ref{necshock} we establish various  properties of weak solutions describing cavity formation from a homogeneously deformed state. In Section \ref{secbifurc} we study the bifurcation curves associated with cavitating weak solutions and establish the properties of the critical stretching and its relation to the critical stretching predicted by the equilibrium elasticity equation. The Appendix lists
some properties of radial deformations, and collects information on stored energies that is widely used in various places of the text.

\section{The equations of radial elasticity}
\label{sec2}

The stored energy of an isotropic elastic material has to satisfy the symmetry requirements
\begin{align*}
\mbox{frame indifference} \quad   &W(QF)  = W(F),  \quad \forall \;  Q \in SO(d)
\\
\mbox{isotropy}  \quad  &W(F)  = W(FQ),     \quad  \forall \; Q \in SO(d).
\end{align*}
where $Q$ is any proper rotation.
These requirements are equivalent to
\begin{equation*}
 W(F)=\Phi (v_1,v_2,\dots, v_d)
\end{equation*}
where $\Phi(v_1,v_2,\dots, v_d) : \RR^d_{++} \to \RR$
is a symmetric function of its arguments and $v_1,\dots,v_d$ are the eigenvalues of
$(F^{\T}F)^{\frac{1}{2}}$ called principal stretches \cite{Antman,Ball82,Tr}.

For isotropic materials the system of elasticity \eqref{MPDE} admits radial solutions of the form
\begin{equation}
\label{radial}
y(x,t) = w( R,t) \frac{x}{R} \quad \mbox{with $R = |x|$}.
\end{equation}
The deformation gradient is computed by
\begin{equation}
\label{RADGRADY}
    \nabla y =  w_R \frac{x \otimes x}{R^2}  +
      \frac{w}{R}  \left (  {\Id} - \frac{x \otimes x}{R^2} \right )
\end{equation}
and has principal stretches $v_1=w_R,\;\; v_2 =  \dots = v_d = \frac{w}{R}$. Using results
on spectral representations of functions of matrices one computes
the first Piola-Kirchhoff stress \cite[p.564]{Ball82},
\begin{equation}\label{PKSTRESSRAD}
\begin{split}
        S(\nabla {y})&=
         \Phi_{1} \big ( w_R, \frac{w}{R} , ... , \frac{w}{R} \big )   \;  \frac{x \otimes x}{R^2} + \Phi_2 \big ( w_R, \frac{w}{R} , ... , \frac{w}{R} \big )
          \;  \Bigl( I - \frac{x \otimes x}{R^2} \Bigr) \, ,
\end{split}
\end{equation}
where we used the notation $ \Phi_{1}  \equiv \frac{\del \Phi}{\del v_1} $,
$ \Phi_{2}  \equiv \frac{\del \Phi}{\del v_2} $ and the symmetry property \eqref{phiprop1}.
(The reader is referred to Appendix \ref{app1} for properties of the stored energies $\Phi$ and details on the notation used throughout).
Using the above formulas one computes that the amplitude $w$  of the radial motion \eqref{radial} is generated by solving
 the second-order partial differential equation
\begin{equation}
\label{radialelas}
\begin{aligned}
       w_{tt}
        &=\frac{1}{R^{d-1}} \frac{\del }{\del R}\bigg(R^{d-1} \frac{\del \Phi}{\del v_1} \big ( w_R,\frac{w}{R}, ..., \frac{w}{R} \big )\bigg)
         -\frac{d-1}{R}  \frac{\del \Phi}{\del v_2} \big (w_R,\frac{w}{R}, ..., \frac{w}{R} \big ).
\end{aligned}
\end{equation}
 In order for solutions to be
interpreted as elastic motions one needs to impose the requirement
$$
\det F = w_R \big (\frac{w}{R} \big )^{d-1} > 0
$$
on solutions of \eqref{radialelas}, which for radial motions suffices to
exclude interpenetration of matter.

Equation \eqref{radialelas} can also be derived by considering the action functional for
radial, isotropic elastic materials, defined as the difference between
kinetic and potential energy
\begin{equation*}
I[w] := \int_0^T \int_0^1 R^{d-1} \Bigl( \tfrac{1}{2} w_t^2  - \Phi(w_R , \frac{w}{R},
..., \frac{w}{R} ) \Bigr)  dR \SP dt \SP .
\end{equation*}
Critical points of the functional $I[w]$ are obtained by computing the first variation
and setting it to zero,
$$
\frac{d}{d \delta} \Big |_{\delta = 0} I [ w + \delta \psi] = 0,
$$
which gives the weak form of \eqref{radialelas},
\begin{equation*}
\int_0^T \int_0^1 R^{d-1} \Bigl( w_t \psi_t - \Phi_1 \big ( w_R,\frac{w}{R}, ..., \frac{w}{R} \big ) \SP \psi_R -  \frac{d-1}{R} \SP
\Phi_2 \big ( w_R,\frac{w}{R}, ..., \frac{w}{R} \big ) \SP \psi \Bigr) dR \SP    dt \SP = 0 \, .
\end{equation*}


Finally, \eqref{radialelas} can be expressed as
a first order system  by introducing the variables
\begin{equation}\label{1ORDERVAR}
a = w_R \, , \quad b = \frac{w}{R} \, ,  \quad v = w_t
\end{equation}
where $u$ is the (longitudinal) strain, $b$ is the transverse strain,  and $v$ is the velocity in the radial direction.
It is expressed as the equivalent first order system
\begin{equation}
\label{elassys}
\begin{aligned}
a_t  &= v_R
\\
v_t  &=   \del_R \Big(  \Phi_1 (a, b, ...,  b) \Big)
   + \frac{d-1}{R} \SP  \Big  ( \Phi_1 (a,b, ... , b) - \Phi_2 ( a, b, ... , b ) \Big )
\\
b_t &= \frac{v}{R} .
\end{aligned}
\end{equation}
subject to the involution $(b R)_R =a$.  This is a system of balance laws with geometric singularity at $R=0$.
Under the hypothesis $\Phi_{11} > 0$ the system \eqref{elassys} is hyperbolic (see  \cite[Def 3.1.1]{daf}  for the usual definition).
The characteristic speeds
$\lambda_{\pm} = \pm \sqrt{ \Phi_{11} }$ are genuinely nonlinear, while $\lambda_0 = 0$
is linearly degenerate, see \cite{LAX, daf}.
The eigenvalues and the corresponding right and left
eigenvectors of the flux of the system \eqref{elassys} are given by
\begin{equation*}
\begin{aligned}
\lambda_{+} &=  \sqrt{\Phi_{11}} \,, \;\;
 r_{+} = \big(1, \; \sqrt{\Phi_{11}}, \; 0
\big)^{\T} \,, \;\;    l_{+}= \big(\sqrt{\Phi_{11}},\; 1,\;
{(d-1)\Phi_{12}}/{\sqrt{\Phi_{11}}}
\big)\\
\lambda_{-} &= - \sqrt{\Phi_{11}}  \,,
\quad r_{-} = \big( 1, \; -\sqrt{\Phi_{11}}, \;
0\big)^{\T} \,, \;\;
  l_{-}= \big(\sqrt{\Phi_{11}}, \; -1, \;
{(d-1)\Phi_{12}}/{\sqrt{\Phi_{11}}}
\big)\\
\lambda_{0} &= 0,\;\;
r_0 = \big( (d-1)\Phi_{12} \,, \;  0, \;  -\Phi_{11} \big)^{\T} \,,
\quad  l_0 =\big(0, \;  0, \; 1 \big).
\end{aligned}
\end{equation*}

\section{ The cavitating solution of Pericak-Spector and Spector}
\label{secselfsimilar}

We are interested in  \eqref{radialelas} subject to the initial-boundary conditions
\begin{equation}
\label{icbc}
\left\{\begin{aligned}
w(R,0) &= \lambda R \\
w(R,t) &= \lambda R \;,\; \mbox{for} \;\; |R| >\bar{\sigma}t\,.
\end{aligned}\right.
\end{equation}
The symmetry of $\Phi$ implies that $\Phi_1 ( \lambda ,\lambda) = \Phi_2 ( \lambda , \lambda)$ and the
 homogeneous deformation  $w_h (R,t)=\lambda R$ is a special equilibrium solution of \eqref{radialelas} associated to the stretching $\lambda > 0$.
To obtain additional solutions, it was suggested in \cite{Sp} to exploit the invariance of
\eqref{radialelas}, \eqref{icbc}  under the family of the scaling transformations
$
w_\lambda (R,t) = \lambda w ( \lambda R , \lambda t)
$
and to seek solutions in self-similar form
\begin{equation}
\label{ssansatz} w(R,t) = t \SP \varphi \Big( \frac{R}{t} \Big).
\end{equation}
Introducing the ansatz \eqref{ssansatz} to \eqref{radialelas}, and using the notations $s=\frac{R}{t}$ and $\dot{} = \frac{d}{ds}$,
 it turns out that  $\varphi(s)$ satisfies
the singular second-order ordinary differential equation
\begin{equation}
\label{ssode}
(s^2 - \Phi_{11} )  \ddot{\varphi} = \frac{d-1}{s} (\dot\varphi - \frac{\varphi}{s} )
\Big [ \Phi_{12} + \frac{\Phi_1 - \Phi_2}{\dot\varphi - \frac{\varphi}{s}} \Big ]  \, .
\end{equation}
Henceforth, we will be using the short hand notations
$$
\Phi_{i}  \big ( \dot \varphi, \frac{\varphi}{s} \big )  \equiv \frac{\del \Phi}{\del v_i}   \big ( \dot \varphi, \frac{\varphi}{s} , ... , \frac{\varphi}{s} \big )  \, \quad
\Phi_{i j }  \big ( \dot \varphi, \frac{\varphi}{s} \big )  \equiv \frac{\del^2 \Phi}{\del v_i \del v_j}   \big ( \dot \varphi, \frac{\varphi}{s} , ... , \frac{\varphi}{s} \big )
$$
and so on for higher derivatives. We refer to Appendix \ref{app1} for details, and caution the reader that the notation together with
the symmetry properties \eqref{phiprop1}, \eqref{phiprop2}
has implications on the differentiation of such formulas.

Moreover, we introduce the variables  $a,b$ defined in analogy to \eqref{1ORDERVAR} by
\begin{equation*}
\label{trans} a = \dot\varphi \, , \quad b = \frac{\varphi}{s}
\end{equation*}
and rewrite \eqref{ssode} in the form of the first order system
\begin{equation}
\label{ssys} \left\{
\begin{aligned}
\big ( s^2 - \Phi_{11} (a,b) \big ) \dot{a} & \, = \, \frac{(d-1)}{s}  (a - b)
P(a,b)
\\
\dot{b} & \, = \, \frac{1}{s} (a-b)
\end{aligned} \right.
\end{equation}
where
\begin{equation}\label{PDEF}
P(a,b) =
\begin{cases}
\Phi_{12}( a,b) + \frac{\Phi_1(a,b) - \Phi_2(a,b)}{a-b}\,, & a<b\\
\Phi_{11}(b,b)\,, & a=b
\end{cases}
\end{equation}
is a continuous function on $\big\{(a,b)\in \RR^2: 0 < a \leq b \big\}$.

\medskip

In analogy to the standard theory \cite{LAX} of the Riemann problem for conservation laws, it is instructive to
classify elementary solutions of \eqref{ssys}. There are three classes of special solutions:

\medskip

{\bf  (a) Uniformly deformed states.}
A special class of solutions of \eqref{ssys} are the constant states  $a = b = constant$, which yield a uniform
deformation $w_h (R)  = \lambda R$ for the original system.

\medskip

{\bf (b) Continuous solutions.} The balance of the convective and the production terms in \eqref{ssys}
leads to a class of solutions that are continuous  (which are not present in homogeneous conservation laws
and are of different origin than the rarefaction waves).
These will be the main object of study here. There are two features of \eqref{ssys} that need
to be addressed by the analysis: (i) the geometric singularity at $s=0$, and (ii) the difficulty
emerging from a potential free boundary at the sonic curve $s = \pm \sqrt{\Phi_{11}(a,b)}$.
It is well known that the resolution of the  Riemann problem for
multi-dimensional hyperbolic systems leads to  systems that change type across sonic-curves
 in the self-similar variables. The analog of this phenomenon for radial solutions
leads to singular ordinary differential equations across the sonic lines.

\medskip

{\bf (c) Shocks.}
One may express the system \eqref{ssys} in the equivalent form
\begin{equation}
\label{ssys2}
\begin{aligned}
\frac{d}{ds} \Big ( s^2 \SP a - \Phi_1 (a, b) \Big ) & \SP = \SP 2 s \SP a +
\frac{d-1}{s} \big( \Phi_1 (a,b) - \Phi_2 (a,b) \big)
\\
\frac{db}{ds} & \SP = \SP \frac{1}{s} (a-b).
\end{aligned}
\end{equation}
Two smooth branches of solutions to \eqref{ssys2} might be connected through
a jump discontinuity at $s = \sigma$ provided that the Rankine-Hugoniot jump conditions
\begin{align}\label{SSJCOND}
b_- = b_+ = : b\,,  \quad \sigma^2 = \frac{ \Phi_1 (a_+ ,  b) - \Phi_2 (a_- ,
b) }{ a_+ - a_-}\,
\end{align}
are satisfied, where
\begin{equation*}
(a_-,b_-) = \lim_{s\to \sigma_-} (a,b)(s)\,, \quad (a_+,b_+) = \lim_{s\to \sigma_+} (a,b)(s).
\end{equation*}

According to the Lax shock admissibility criterion (see \cite{LAX} where the criterion was introduced or \cite[Secs 8.3, 9.4]{daf}),
a shock of the $2^{nd}$ characteristic family will be admissible  if
\begin{equation}\label{LX2FAM}
\begin{aligned}
\sqrt{\Phi_{11}(a_+,b)} \, <  \, \sigma_{+} = \sqrt{\frac{ \Phi_1 (a_+ , b) - \Phi_2 (a_-
, b) }{ a_+ - a_-}} \, < \, \sqrt{\Phi_{11}(a_-,b)} \,.
\end{aligned}
\end{equation}
In particular,
\begin{align*}
&\mbox{if \, $\Phi_{111}>0$, \, then \eqref{LX2FAM} is equivalent to  $a_+ < a_-$} \notag\\
&\mbox{if \, $\Phi_{111}<0$, \, then \eqref{LX2FAM} is equivalent to  $a_+ > a_-$} \, .\label{LX2FAM2}
\end{align*}
Similarly, shocks of the $1^{st}$ characteristic family are  admissible via the Lax criterion if
\begin{equation}\label{LX1FAM}
\begin{aligned}
-\sqrt{\Phi_{11}(a_+,b)} \, <  \, \sigma_{-} = -\sqrt{\frac{ \Phi_1 (a_+ , b) - \Phi_2
(a_- , b) }{ a_+ - a_-}} \, < \, -\sqrt{\Phi_{11}(a_-,b)} \,.
\end{aligned}
\end{equation}
in which case
\begin{align*}
&\mbox{if \, $\Phi_{111}>0$, \, then \eqref{LX1FAM} is equivalent to $a_- < a_+$}\\
&\mbox{if \, $\Phi_{111}<0$, \, then \eqref{LX1FAM} is equivalent to $a_- > a_+$} \, .
\end{align*}
For radial motions shocks of the $2^{nd}$ characteristic family are outgoing while shocks of the
$1^{st}$ characteristic family are incoming to the origin. For the cavitation problem, it is natural to restrict to outgoing shocks
and the kinematics of the cavity dictates that  $a_- < a_+$. Therefore, we impose the
condition $\Phi_{111}<0$ which corresponds to softening elastic response.
Softening refers to the property that the elastic modulus decreases with an increase
of the longitudinal strain and plays an important role in cavitation analysis.

\medskip

\subsection{The cavitating solution}
\label{seccavi}
We next consider the problem of cavitation and discuss the continuous type of solutions in this context.
We employ a  constitutive relation of polyconvex class
\begin{equation*}\tag{H0}
\begin{aligned}
&\Phi (v_1, v_2, ... , v_d) =  \sum_{i =1}^d g(v_i) + h (v_1 v_2  \dots v_d) \, ,
\end{aligned}
\end{equation*}
where $g\in C^3[0,\infty)$, $h  \in C^3(0,\infty)$ satisfy \eqref{GHPROP1} and \eqref{GHPROP2}
and thus  $\Phi_{11} > 0$ and  $\Phi_{111}< 0$. Hypothesis \eqref{GHPROP1}
alludes to polyconvexity of the stored energy while Hypothesis \eqref{GHPROP2} manifests softening elastic response.

A stored energy of the form \eqref{spseis}  with $g(x) = \frac{1}{2} x^2$ was used in \cite{Sp} to establish cavitation for  $d\ge 3$.
The  generalization presented in \eqref{spseis} is necessary in order to handle the case of $d=2$,
as the hypothesis of quadratic growth is too strong to allow for a cavity when $d=2$.
The ideas presented in this section closely follow the discussion of \cite{Sp,Sp2}, nevertheless they are presented here
first for the reader's convenience but also  to set up the landscape for the forthcoming analysis in the following sections.

The differential equation \eqref{ssode} is expressed as
\begin{equation}\label{ODESP}
Q(\dot{\varphi},\tfrac{\varphi}{s},s) \SP \ddot{\varphi}
=\frac{(d-1)}{s}\Bigl(\dot{\varphi} -
\frac{\varphi}{s}\Bigr)P(\dot{\varphi},\tfrac{\varphi}{s})
\end{equation}
or equivalently
\begin{equation}
\label{SSYSSP} \left\{
\begin{aligned}
Q(a,b,s)\SP \dot{a} & \SP = \frac{(d-1)}{s} (a-b)P(a, b)
\\
\dot{b} & \SP = \SP \frac{1}{s} (a-b)
\end{aligned}\right.
\end{equation}
where
\begin{align}
Q(a,b,s)  &  \,\, =  s^2 - \Phi_{11}(a,b) \,\stackrel{ \eqref{spseis} }{=}
\, s^2 - \big[ g''(a) + b^{2d-2}h''\big(ab^{d-1})\big] \label{QDEFSP}.
\end{align}
%

\noindent{\bf Desingularization at the origin.} We next transform  \eqref{ODESP} into a system for the quantities
\begin{equation}\label{VDEF}
    \varphi(s), \,\, v(s) = \dot{\varphi}\Bigl(\frac{\varphi}{s}\Bigr)^{d-1} \;\;\;
    \mbox{with data} \;\;\; \varphi(0)=\varphi_0 > 0, \,\,\, v(0)=v_0 > 0 \, ,
\end{equation}
henceforth restricting to stored energies of class \eqref{spseis}. A lengthy but straightforward calculation shows that
$(\varphi , v)$ satisfies the initial-value problem
\begin{equation}\label{ODEIVPSYS}
\left\{
\begin{aligned}
\dot{\varphi} &= v\Big(\frac{s}{\varphi}\Big)^{d-1}\\
\dot{v} & = \bigg(\frac{d-1}{\varphi}\bigg) \frac{ \Big( \frac{s}{\varphi}\Big)^{2d-3}v
\Big( v \big(\frac{s}{\varphi}\big)^{d} - 1 \Bigr)\Big[s^2 - g''( v
(\tfrac{s}{\varphi})^{d-1})
\Big]}
{\Bigl \{ -h''(v)+\Big [ s^2-g''(v(\tfrac{s}{\varphi})^{d-1})\Big ] \big(\frac{s}{\varphi}\big)^{2d-2}\Bigr \}
\SP }
\\[2pt]
&\qquad +\bigg(\frac{d-1}{\varphi}\bigg)\frac{ \big( \frac{s}{\varphi}\big)^{d-2}
\Big[g'( v (\tfrac{s}{\varphi})^{d-1})-g'(\frac{\varphi}{s}) \Big]}
{\Bigl \{ -h''(v)+\Big [ s^2-g''(v(\tfrac{s}{\varphi})^{d-1})\Big ] \big(\frac{s}{\varphi}\big)^{2d-2}\Bigr \}
\SP }
\\
\varphi(0)&=\varphi_0>0\\
v(0)&=v_0>0.
\end{aligned}\right.
\end{equation}

\par\smallskip

In view of \eqref{GHPROP1} and the assumption $d \geq 2$,  the
only term on the right-hand side of \eqref{ODEIVPSYS} that might be singular  at $s=0$ is the term
$g'(\tfrac{\varphi}{s})(\frac{s}{\varphi})^{d-2}$. This motivates to impose the growth condition
\begin{equation}\label{GGROWTH}\tag{H3}
\lim_{x\to \infty} \bigg(\frac{g'(x)}{x^{d-2}}\bigg) = \gamma \geq 0 \,.
\end{equation}
Doing that the emerging system is not singular and one may apply the standard existence theory for ordinary
differential equations to obtain

\begin{lemma}\label{IVPEXISTLMM}
Suppose  $d \geq 2$ and hypotheses \eqref{spseis}, \eqref{GHPROP1}, and
\eqref{GGROWTH} hold. Then there exists a unique solution of the system \eqref{ODEIVPSYS}
defined on a maximal interval of existence.
\end{lemma}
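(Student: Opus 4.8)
The plan is to view \eqref{ODEIVPSYS} as a first order system $\dot U=\mathcal F(s,U)$ for $U=(\varphi,v)$ on $s\in[0,\infty)$, to show that $\mathcal F$ extends to a map that is continuous near $(s,U)=(0,\varphi_0,v_0)$ and locally Lipschitz in $U$ with a constant independent of $s$ there, and then to invoke Picard--Lindel\"of together with the standard continuation argument. The first step is bookkeeping on the right-hand side of \eqref{ODEIVPSYS}: every factor $(s/\varphi)^{k}$ that occurs carries an exponent $k\ge1$ --- the exponents present are $k\in\{d-1,\,d,\,2d-3,\,2d-2\}$, all $\ge1$ because $d\ge2$ --- with the sole exception of the factor $(s/\varphi)^{d-2}$ that multiplies $g'(\tfrac{\varphi}{s})$. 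Moreover the denominator $-h''(v)+[\,s^2-g''(v(s/\varphi)^{d-1})\,](s/\varphi)^{2d-2}$ tends to $-h''(v)\ne0$ as $s\to0_+$ by \eqref{GHPROP1}, and $g,h\in C^3$. Hence, on a small neighbourhood of $(0,\varphi_0,v_0)$ inside $\{\varphi>0,\ v>0\}$, the denominator is bounded away from zero and every term of $\mathcal F$ except $g'(\tfrac{\varphi}{s})(\tfrac{s}{\varphi})^{d-2}$ is visibly $C^1$ in $(s,\varphi,v)$ up to $s=0$; it remains to analyse that single term.

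Set $\Gamma(x):=g'(x)\,x^{-(d-2)}$ for $x\ge1$, so the term equals $\Gamma(\tfrac{\varphi}{s})$. Since $\varphi_0>0$ we have $\varphi/s\to\infty$ as $s\to0_+$, and \eqref{GGROWTH} is exactly the statement $\Gamma(x)\to\gamma$; assigning the term the value $\gamma$ at $s=0$ therefore makes $\mathcal F$ continuous on $[0,\eps)\times\{\varphi>0,\,v>0\}$, which already delivers Peano existence. For the Lipschitz bound observe that $\Gamma(\tfrac{\varphi}{s})$ does not depend on $v$, while $\del_\varphi\Gamma(\tfrac{\varphi}{s})=\tfrac1s\,\Gamma'(\tfrac{\varphi}{s})=\tfrac1\varphi\cdot\tfrac{\varphi}{s}\,\Gamma'(\tfrac{\varphi}{s})$, so it suffices to bound $x\Gamma'(x)=g''(x)\,x^{-(d-3)}-(d-2)\,g'(x)\,x^{-(d-2)}$ for large $x$. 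Here the standing hypotheses \eqref{GHPROP1}--\eqref{GHPROP2} come in: $g''>0$ and $g'''\le0$ make $g''$ positive and nonincreasing, hence bounded, and force $g'(x)\le g'(0)+g''(0)x$ to grow at most linearly. For $d\ge3$ both terms of $x\Gamma'(x)$ are then bounded on $[1,\infty)$ (indeed $\to0$ once $d\ge4$); for $d=2$ one has $x\Gamma'(x)=x\,g''(x)$, and in that case \eqref{GGROWTH} forces $g''\in L^{1}(0,\infty)$, which together with monotonicity yields $x\,g''(x)\to0$ through $\tfrac{x}{2}\,g''(x)\le\int_{x/2}^{x}g''(t)\,dt\to0$. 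The remaining $\del_\varphi$- and $\del_v$-derivatives of $\mathcal F$ are bounded near $(0,\varphi_0,v_0)$ since $g,h\in C^3$ and the surviving powers $(s/\varphi)^{k}$, $k\ge1$, vanish at $s=0$; thus $\mathcal F(s,\cdot)$ is Lipschitz in $U$, uniformly for $s\in[0,\eps)$.

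With continuity and the uniform-in-$s$ Lipschitz property in hand, the Picard--Lindel\"of theorem --- in the version allowing a right-hand side that is merely continuous in the independent variable --- produces a unique solution of \eqref{ODEIVPSYS} on some interval $[0,\eps']$, and the usual maximality argument extends it uniquely to a maximal interval $[0,s_*)$, $0<s_*\le\infty$, along which $(\varphi,v)(s)$ remains in the open set where $\mathcal F$ is defined, i.e.\ $\varphi>0$, $v>0$, and $Q(\dot\varphi,\tfrac{\varphi}{s},s)\ne0$ in the notation of \eqref{QDEFSP}. This is precisely the assertion of the lemma. The one genuinely non-routine point is the uniform-in-$s$ Lipschitz estimate at $s=0$: mere continuity of $\mathcal F$ there --- hence Peano existence --- follows from \eqref{GGROWTH} alone, whereas uniqueness needs the decay of $x\Gamma'(x)$, and that is where the convexity and softening structure of $g$ is used; everything else is routine once $d\ge2$ and \eqref{spseis} are in force.
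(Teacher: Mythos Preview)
Your proof is correct and follows essentially the same approach as the paper: both isolate the single term $(\tfrac{s}{\varphi})^{d-2}g'(\tfrac{\varphi}{s})$ as the only potentially singular factor, use \eqref{GGROWTH} to get continuity up to $s=0$, and then bound its $\varphi$-derivative via the quantity $g''(x)x^{3-d}-(d-2)g'(x)x^{2-d}$ using $g''>0$, $g'''\le0$, and \eqref{GGROWTH}. Your treatment of the case $d=2$ (via $g''\in L^1$ and monotonicity giving $xg''(x)\to0$) is slightly different from the paper's direct inequality $g''(x)x\le g'(x)-g'(0)$, but both arrive at the same bound; note that, like the paper, you tacitly invoke the softening hypothesis \eqref{GHPROP2} on $g$ even though it is not listed in the lemma's hypotheses.
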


{
\begin{proof}
Under hypotheses  \eqref{spseis}-\eqref{GHPROP1} we have $g''(0) > 0$, $h''(v_0)>0$. Moreover,
by \eqref{GGROWTH}, the limit $\lim_{x\to \infty} (g'(x)x^{2-d})$ exists and is finite and the right hand side of \eqref{ODEIVPSYS}
is continuous for $s \in [0,1]$ (up to the boundary $s=0$). A careful review of the various terms indicates that the initial value problem
\eqref{ODEIVPSYS} is expressed as
\begin{equation}\label{ODEIVPSYSN}
\left\{
\begin{aligned}
\dot{\varphi} &=  A(s, v, \varphi)
\\
\dot{v} & = B(s, v, \varphi)  +  C(s, v, \varphi)  H( s, \varphi) \\
\varphi(0)&=\varphi_0>0\\
v(0)&=v_0>0.
\end{aligned}\right.
\end{equation}
where  $A, B, C :  [0,1] \times \mathcal N ( v_0, \varphi_0 ) \to \RR$ are $C^1$ functions on $[0,1] \times \mathcal N( v_0, \varphi_0 ) $
with $\mathcal N$ a neighbourhood of $( v_0, \varphi_0 )$. The term
\begin{equation}
H(s,  \varphi) =  \begin{cases}
 \big ( \frac{s}{\varphi} \big )^{d-2} g'  \big ( \frac{\varphi}{s} \big ) & s > 0
 \\
 \gamma & s=0
 \end{cases}
\end{equation}
carries the singular behaviour in $s$ and by \eqref{GGROWTH} it is continuous on $[0,1] \times \mathcal N( v_0, \varphi_0 ) $.

Moreover, the assumptions  $g''>0$, $g''' \leq 0$ imply
\begin{equation*}
0 \leq  g''(x)x \leq \int_0^1 g'(sx)x ds = g'(x)-g'(0)  \, , \quad  x > 0
\end{equation*}
and,  using once again \eqref{GGROWTH}, it automatically implies
\begin{equation}\label{GGRCONS}
 0 \leq \limsup_{x \to \infty}  \bigg(\frac{g''(x)}{x^{d-3}} \bigg) \leq \gamma_0 = \left\{
 \begin{aligned}
 \gamma-g'(0)\SP, \quad & d =2 \\
 \gamma,\quad & d\geq 3\,.
 \end{aligned}\right.\,
\end{equation}
Now observe that
$$
\Big | \frac{ \partial H}{\partial \varphi} (s, \varphi)  \Big | =  \frac{1}{\varphi} \Big | g'' \big ( \tfrac{\varphi}{s} \big )  \big( \tfrac{s}{\varphi} \big )^{d-3}
       - (d-2)  g' \big ( \tfrac{\varphi}{s} \big )  \big( \tfrac{s}{\varphi} \big )^{d-2} \Big | \le C
$$
for $s \in [0,1]$ and $\varphi \in \mathcal{ N} (\varphi_0)$ a suitable neighbourhood of $\varphi_0 > 0$.
The standard existence and uniqueness theory for systems of ordinary differential equations then provides the result.
\end{proof}
}

\begin{remark}\rm
The sign requirements $g''>0, \, g'''\leq 0$ in hypotheses \eqref{GHPROP1}, \eqref{GHPROP2} place the restriction that $\limsup_{x\to \infty} |\frac{g'(x)}{x}|< \infty$.
Also,  if  $d=2$ then \eqref{GGROWTH} enforces that $\lim_{x\to\infty}|g'(x)| = \gamma<\infty$. This is consistent with \eqref{GHPROP1}-\eqref{GHPROP2}.
\end{remark}

\noindent{\bf Boundary data at the cavity surface.}\label{secCauchy} A natural assumption motivated from mechanical considerations is to impose that the radial Cauchy stress vanishes
at the cavity surface. Using the standard formula relating
the Cauchy stress tensor $T$ to the Piola-Kirchhoff stress $S$  ({\it e.g.} \cite{Ball82,Antman})
\begin{equation*}
    T(F) = \frac{1}{\det F} S(F) F^{\T}
\end{equation*}
and \eqref{PKSTRESSRAD} it follows that for  $\nabla y$ given by \eqref{RADGRADY} we have
\begin{equation*}
\begin{aligned}
T(\nabla y) = \frac{1}{w_R \big( \frac{w}{R}\big)^{d-1}} \bigg[\Phi_1 \SP w_R
\frac{x\otimes x}{R^2}+ \Phi_2 \frac{w}{R} \Big( \Id - \frac{x \otimes x}{R^2}\Big)
\bigg]\,.
\end{aligned}
\end{equation*}
The radial component of the Cauchy stress is given by
\begin{equation*}
T_{rad}(s)  :=   \frac{x}{R} \cdot  T(\nabla y) \frac{x}{R} \stackrel{ \eqref{spseis} }{=} \Big(\frac{s}{\varphi}\Big)^{d-1} \bigg[ g'(\dot{\varphi}(s)) +
\Big(\frac{\varphi}{s}\Big)^{d-1}h'(v(s))\bigg] \,.
\end{equation*}
For the solution $\varphi, v$  of \eqref{ODEIVPSYS} it is easy to see that
\begin{equation*}
\frac{\varphi}{s} \sim \frac{\varphi_0}{s}, \quad  \dot{\varphi}(s) \sim
    v_0\Big(\frac{s}{\varphi_0}\Big)^{d-1} \, \quad \, \mbox{as} \, \quad s\to 0_+
\end{equation*}
and therefore
\begin{equation}\label{TATZERO}
    \lim_{s\to 0_+}T_{rad}(s) =  h'(v_0) \,.
\end{equation}
This motivates to impose the growth condition
\begin{align}
h'(x) \to -\infty \; \;  \mbox{as} \quad x \to 0_+ \, ,
\qquad
h'(x) &\to +\infty \; \;  \mbox{as} \quad x \to +\infty  \, .
\qquad
\label{DHATINF}\tag{H4}
\end{align}
Under \eqref{GHPROP1}, \eqref{DHATINF} the inverse $h'^{-1}$ is a
well-defined function on $\RR$ and  the boundary condition becomes
\begin{equation}\label{CSTRESSBND1}
    T_{rad}(0)=0 \,\,\,\mbox{is equivalent to} \,\,\, v_0=H:=h'^{-1}(0)\,.
\end{equation}

One may consider more general boundary conditions that are referred in \cite{Sp} as cavities with content
and require that $T_{rad}(0)=G(\varphi_0)$, where $G$ is some prescribed function.
Such conditions could model at a phenomenological level the effect of remnant plasticity inside the cavity, and are postulated
in analogy to the form of kinetic relations in the motion of phase boundaries. For physical reasons the remnant plasticity at the cavity
should correspond to tensile forces, which dictates that $G(\varphi_0) > 0$.  One checks that
\begin{equation}\label{CSTRESSBND2}
    T_{rad}(0)=G(\varphi_0) \,\,\, \mbox{is equivalent to} \,\,\, v_0=h'^{-1}(G(\varphi_0)).
\end{equation}
It is not entirely clear if such an assumption is mechanically justified, nevertheless it can be analyzed by the mathematical
theory at no additional effort. Note that both \eqref{CSTRESSBND1} and \eqref{CSTRESSBND2} decrease the freedom of
the data by one degree. For the bifurcation analysis in section \ref{secbifurc} we assume that $G(\varphi_0)$ is continuous at $\varphi_0 = 0$.
This implies that  $V(x) := h'^{-1}(G(x))$ is also continuous at $x=0$.

\par\smallskip

\noindent{\bf A class of $C^2$ self-similar solutions.} We now construct a class of $C^2$ self-similar solutions to \eqref{ODESP}.
Proceeding along the lines of  \cite[Thm 5.1]{Sp} we have:

\begin{theorem}
\label{CAVSOLEXISTSP}
Assume that $d\geq 2$,  $\Phi$ satisfies \eqref{spseis}-\eqref{GGROWTH} and let $\varphi_0>0$, $v_0>0$. Then, there exists  a unique solution $\varphi$ of \eqref{ODESP} satisfying the initial data \eqref{VDEF} and defined on a maximal interval of existence $[0,T)$, with $T<\infty$. The solution has the following properties:
\begin{itemize}
\item [$(i)$] \; $(\varphi, v)$ solves \eqref{ODEIVPSYS} and there holds

\begin{equation}\label{PHIATZERO}
\begin{aligned}
\bvarp{s}  \, \sim \, \frac{\varphi_0}{s} \quad \mbox{and} \quad
\avar (s)  \, \sim \, v_0\Big(\frac{s}{\varphi_0}\Big)^{d-1} \, \quad \, \mbox{as} \quad
s\to 0_+ .
\end{aligned}
\end{equation}


\item [$(ii)$] \; $\avar$, $\bvar$, $(\avar -\bvar)$ are strictly monotonic and satisfy
\begin{align}\label{MONT1}
&\ddot{\varphi}(s)>0, \quad \frac{d}{ds}\Big(\bvar\Big) < 0, \quad \frac{d}{ds}\Big(\avar
- \bvar\Big) > 0  \quad \mbox{on} \quad 0<s<T\,,\\[3pt]
\label{MONT2}
& 0 <  \dot{\varphi}(s) < \dot{\varphi}(t) < \frac{\varphi(t)}{t} < \frac{\varphi(s)}{s} \qquad  \mbox{for} \qquad  \SP 0<s<t<T\,,
\end{align}
and
\begin{equation} \label{QSIGN}
\qquad Q(\avar,\bvart,s) < 0, \quad  \avar (s) - \bvarp{s} <0 \, \qquad \mbox{for} \qquad 0<s<T .
\end{equation}

\item[$(iii)$]
The following limits exist
\begin{equation}\label{QABLIM}
\lim_{s\to T_-}Q(\avar,\bvart,s)=0, \quad
\lim_{s\to T_-}\Big(\avar - \bvar \Big) = c_0 \leq 0\\
\end{equation}
(but it is not known if $c_0<0$).
\end{itemize}

\end{theorem}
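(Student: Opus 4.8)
The plan is to derive the ODE system \eqref{ODEIVPSYS} from \eqref{ODESP} via a direct (if lengthy) computation, invoke Lemma \ref{IVPEXISTLMM} to get a local $C^1$ solution $(\varphi,v)$, and then translate back to $\varphi$ to establish $C^2$ regularity and the qualitative properties. For part $(i)$, differentiating $v = \dot\varphi(\varphi/s)^{d-1}$ and substituting \eqref{ODESP} produces the stated right-hand side; Lemma \ref{IVPEXISTLMM} gives the unique solution on a maximal interval $[0,T)$. The asymptotics \eqref{PHIATZERO} follow because near $s=0$ the equation for $v$ has $\dot v \to 0$ (the bracketed factors are bounded and the prefactor $(d-1)/\varphi$ is finite), so $v(s)\to v_0$, whence $\dot\varphi(s) = v(s)(s/\varphi(s))^{d-1}\sim v_0(s/\varphi_0)^{d-1}$ and integrating gives $\varphi(s)\to\varphi_0$, so $\varphi/s\sim\varphi_0/s$.

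For part $(ii)$ I would argue by a continuity/invariant-region argument on the interval where the solution exists. First note that at $s=0^+$ one has $\dot\varphi - \varphi/s <0$ (since $\varphi/s\to+\infty$ while $\dot\varphi\to 0$) and $Q(\dot\varphi,\varphi/s,s) = s^2 - g''(\dot\varphi) - (\varphi/s)^{2d-2}h''(\dot\varphi(\varphi/s)^{d-1})$: near $0$, $\dot\varphi(\varphi/s)^{d-1}= v\to v_0>0$ so $h''(v)>0$ by \eqref{GHPROP1}, and $(\varphi/s)^{2d-2}\to\infty$ (for $d\ge 2$; for $d=2$ it is $\ge 1$ and one checks the sign directly), forcing $Q<0$ initially. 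Then I claim these two sign conditions, $Q<0$ and $a-b<0$, persist: by \eqref{PDEF}, $P(a,b)>0$ when $a<b$ (this uses convexity of $\Phi$ in $v_1$, i.e. $\Phi_{11}>0$, together with $\Phi_{12}$ and the mean-value form — I would verify $P>0$ as a separate algebraic lemma from \eqref{spseis}, \eqref{GHPROP1}), so \eqref{SSYSSP} gives $Q\dot a = \frac{d-1}{s}(a-b)P < 0$, hence $\dot a>0$, i.e. $\ddot\varphi>0$; and $\dot b = \frac1s(a-b)<0$. Since $\dot a>0$ and $\dot b<0$, the gap $a-b=\dot\varphi-\varphi/s$ is strictly increasing, which is \eqref{MONT1}, and \eqref{MONT2} follows by integrating these monotonicities from $s$ to $t$. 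The key point is that $a-b$ cannot reach $0$ in finite $s<T$: if it did, $\dot a$ and $\dot b$ would vanish there and the solution would continue as the constant state $a=b$, but then tracing backwards contradicts strict monotonicity — alternatively, $a-b$ increasing and negative means $b-a$ is positive and decreasing, so it stays positive as long as $Q<0$. Thus the sign of $Q$ is what can fail first, and \eqref{QSIGN} holds on all of $(0,T)$.

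For part $(iii)$: since $Q<0$ on $(0,T)$ and $\frac{d-1}{s}(a-b)P$ stays bounded away from $0$ (as $a-b$ is monotone), blow-up of the maximal solution can only come from $Q\to 0$ — if $T$ is the maximal time and $Q$ stayed bounded below $0$, the right-hand side of \eqref{ODEIVPSYS} would stay bounded and the solution would extend past $T$, contradicting maximality; hence $\lim_{s\to T_-}Q = 0$. Because $a-b$ is increasing and bounded above by $0$, the limit $c_0 := \lim_{s\to T_-}(\dot\varphi-\varphi/s)$ exists and satisfies $c_0\le 0$. That $T<\infty$ follows by noting $b=\varphi/s$ is decreasing while $a=\dot\varphi$ is increasing and bounded (again using the growth hypotheses to bound $\dot a$), so $Q = s^2 - \Phi_{11}(a,b)$ must cross $0$ at some finite $s$ since $s^2\to\infty$ but $\Phi_{11}(a,b)$ stays bounded.

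The main obstacle I anticipate is the bookkeeping in verifying that $Q<0$ is genuinely preserved — one must rule out the scenario where $Q$ touches $0$ from below and the solution somehow continues, and handle the $d=2$ case where the powers of $\varphi/s$ no longer blow up at the origin and the sign of $Q$ near $s=0$ needs the more careful estimate \eqref{GGRCONS}. Establishing the algebraic positivity $P(a,b)>0$ for $0<a<b$ cleanly from \eqref{spseis}–\eqref{GHPROP2} is the other place where care is needed, since it is the engine driving all the monotonicity.
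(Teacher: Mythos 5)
Your approach is the same as the paper's: derive \eqref{ODEIVPSYS} from \eqref{ODESP}, invoke Lemma~\ref{IVPEXISTLMM} for local existence, pass to the $(a,b)=(\dot\varphi,\varphi/s)$ formulation of \eqref{SSYSSP}, establish $P>0$, $\Phi_{11}>0$, $\Phi_{111}<0$ to drive the monotonicity, and conclude with a maximality argument at $s=T$.

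However, in part $(iii)$ there is a gap where the paper does real work that you skip. Your argument --- if $Q$ stayed bounded away from $0$ the right-hand side of \eqref{ODEIVPSYS} would be bounded and the solution would extend --- only establishes $\limsup_{s\to T_-} Q = 0$ (the trajectory must leave every compact subset of $\{Q<0\}$), not $\lim_{s\to T_-} Q = 0$. A priori $Q$ could oscillate, with a subsequence tending to $0$ and another staying bounded away. The paper closes this exactly: it differentiates $Q^2$ and shows $\big|\tfrac{d}{ds}Q^2\big|\le C$ on $(\eps,T)$, so if $\liminf Q < \limsup Q = 0$ the total variation bound would be violated. (A cleaner alternative, also available to you: since $a$ is increasing, $b$ decreasing, and $a(\eps)<a<b<b(\eps)$, both $a,b$ converge to finite positive limits $A\le B$ as $s\to T_-$, and continuity of $\Phi_{11}$ then gives $Q(s)\to T^2-\Phi_{11}(A,B)$ directly, which by the $\limsup$ claim must be $0$. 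Your proof mentions these monotone limits but does not connect them to the convergence of $Q$.) Either route must appear explicitly; as written, the step ``hence $\lim_{s\to T_-}Q=0$'' is not justified.

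Two smaller corrections. First, the parenthetical about $d=2$ in your computation of $Q$ near $s=0$ is off: $(\varphi/s)^{2d-2}=(\varphi/s)^2\to\infty$ also when $d=2$, so there is no special case --- your own conclusion $Q\to-\infty$ as $s\to 0_+$ holds uniformly for $d\ge 2$. Second, ``$b-a$ is positive and decreasing, so it stays positive'' is not a valid step (positive decreasing quantities can reach $0$ in finite time); your first alternative --- if $a=b$ at some interior $s_*$ with $Q(s_*)<0$ the ODE is locally Lipschitz there, so by uniqueness the solution would be the constant state, contradicting strict monotonicity for $s<s_*$ --- is the correct argument and matches the paper's remark that the solution cannot hit the diagonal unless $Q=0$.
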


\begin{proof}
By Lemma \ref{IVPEXISTLMM}   there exists a unique local solution $(\varphi,v)$ of  \eqref{ODEIVPSYS}
 with $\varphi(0)=\varphi_0>0$, $v(0)=v_0>0$
which of course satisfies (i). Next, we observe that $(a,b)=(\dot{\varphi},\frac{\varphi}{s})$ solves the system \eqref{SSYSSP} where, in view of
\eqref{QDEFSP} and \eqref{PHIATZERO},  we have $Q(0) <0$. This together with \eqref{PHIATZERO}
implies that $Q(s)<0$ and $0<a(s)<b(s)$ for $s\in (0,\eps]$ for sufficiently small $\eps>0$.
We now check that $\Phi$ satisfying \eqref{spseis}-\eqref{GGROWTH}
has the properties
\begin{equation}
\label{DPHISIGN1}
\begin{aligned}
\Phi_{11}(a,b)>0\,, \quad \Phi_{111}(a,b)<0\,, \quad P(a,b)>0
\end{aligned}
\end{equation}
which together with \eqref{SSYSSP} imply $\dot a > 0$, $\dot b < 0$ and
\begin{equation*}
    \frac{d}{ds}(a-b) = \frac{1}{sQ} (a-b)\big[ (d-1)P-Q \big] > 0.
\end{equation*}
Thus,  \eqref{MONT1},  \eqref{MONT2} and \eqref{QSIGN} must hold for $s\in(0,\eps]$. The solution can be continued in that manner for $s > \eps$ so long as $Q(s)<0$, \, $0<a(s)<\infty$ , and $0<b(s) < \infty$
on a maximal interval of existence $[0,T)$, with $T \leq \infty$. It is also clear that the solution cannot hit the diagonal
$a=b$ unless $Q = 0$. On the interval $(\eps, T)$ we clearly have
\begin{equation}
\label{abbound}
a(\eps) < a(s) < b(s) < b(\eps) \, , \quad s \in (\eps, T) \, ,
\end{equation}
that is $a(s)$ and  $b(s)$ stay
away from zero and in a bounded range as $s$ increases. Then \eqref{abbound} and the fact that $Q(s) = s^2 - \Phi_{11}(a,b)<0$, $s\in(0,T)$
imply that \, $T<\infty$ and
\begin{equation}
\label{Qsub}
\limsup_{s \to T_-} \SP Q (a(s) , b(s) , s) = 0.
\end{equation}
Also, in view of \eqref{MONT1} and \eqref{MONT2} we have $a-b \to c_0 \le 0$ as $s \to T_-$.

A computation shows that
$$
\frac{1}{2} \frac{d}{ds} Q^2 = 2 s Q - \frac{d-1}{s} (a-b)  \Big [ P(a,b) \Phi_{111}(a,b) +  \Phi_{112}(a,b)  Q \Big ]
$$
Using \eqref{QDEFSP} and the bounds \eqref{abbound} we see that for $s \in (\eps, T)$
\begin{equation}
\label{derivb}
\Big | \frac{d}{ds} Q^2  \Big | \le C \quad s \in (\eps, T) \, .
\end{equation}
Now if $T < \infty$ and
$$
\liminf_{s \to T_-} \SP Q (a(s) , b(s) , s)  <   \limsup_{s \to T_-} \SP Q (a(s) , b(s) , s) = 0
$$
then \eqref{derivb} would be violated. We thus deduce that $Q(s) \to 0$ as $s \to T_{-}$.
\end{proof}

\subsection{ Connection to a uniformly deformed state.}
 \label{secSHKCONNCT}

Next,  the smooth cavitating  solution $(\dot \varphi,\tfrac{\varphi}{s})$ of  \eqref{SSYSSP} constructed in Theorem \ref{CAVSOLEXISTSP}
will be connected to a uniformly deformed state through an outgoing shock of speed $\sigma > 0$. At the connection,
the Rankine-Hugoniot relations \eqref{SSJCOND} must be satisfied and they imply that
\begin{equation*}
    \varphi_{-}(\sigma) = \varphi_{+}(\sigma),
    \quad \sigma^2 \big( \avar_{+}(\sigma)-\avar_{-}(\sigma) \big) =
    \Phi_{1}(\avar_{+},\tfrac{\varphi_{+}}{\sigma})-
    \Phi_{1}(\avar_{-},\tfrac{\varphi_{-}}{\sigma})
\end{equation*}
where $\varphi_{-}(s) \equiv \varphi (s)$ is the given cavitating solution on the left, and $\varphi_{+} (s) = \lambda s$, for some $\lambda$, is the uniformly
deformed state on the right.

Thus to connect the cavitating solution $\varphi(s)$ to a uniformly  deformed state through a shock wave it suffices to define the function
\begin{equation}\label{SHKCOND}
p(s) := \frac{ \Phi_{1}(\avar,\bvart) -  \Phi_{1}(\bvart,\bvart)}{ \avar-\bvar} - s^2
\end{equation}
and to identify a zero of $p(s)$ for $s \in (0,T)$ the maximal interval of existence of $\varphi$. It is expedient to view the right hand side
of \eqref{SHKCOND} as a function of the principal stretches,
\begin{equation}
\begin{aligned}
R(a,b,s):&=
\begin{cases}
\frac{\Phi_{1}(a,b)-
    \Phi_{1}(b,b)}{a-b} - s^2, \, &a<b\\
\Phi_{11}(b,b)-s^2, \, &a=b\,,\\
\end{cases}
\end{aligned} \label{SHOCKR}
\end{equation}
and to observe that $R(a,b,s)$ is  continuous on $\big\{(a,b)\in \RR^2: 0 < a \leq b \big\} \times \RR$.

By  \eqref{MONT2} and \eqref{QABLIM},  the solutions $(v,\varphi)$ in Theorem \ref{CAVSOLEXISTSP} have well-defined limits
\begin{equation}\label{ABDEF}
0 <  A:= \lim_{s \to T_-} \dot{\varphi}(s) \, \leq \, B:=\lim_{s \to T_-} \Big(\frac{\varphi}{s}\Big) \, < \infty \, .
\end{equation}

\begin{theorem}[\bf existence of connection point]\label{EXISTCPT}
Assume $d\geq 2$ and $\Phi$ satisfies \eqref{spseis}-\eqref{DHATINF}. Let $(v,\varphi)$ be the solution to \eqref{ODEIVPSYS} constructed in Theorem \ref{CAVSOLEXISTSP} and $(a,b)=(\dot{\varphi},\frac{\varphi}{s})$. Let $A,B$ be the limits defined in \eqref{ABDEF}. 
Then,
\begin{itemize}

\item[$(i)$] if  $A=B$ then $(a, b)$ is connected continuously at  $\sigma=T$ to
the state $(B,B)$ associated with the uniform deformation $\varphi_+(s) = Bs$\,;

\item[$(ii)$] if $A<B$ then  $(a,b)$ is connected to a uniformly deformed
state through a Lax-admissible shock at some intermediate point $\sigma\in(0,T)$.

\end{itemize}
\end{theorem}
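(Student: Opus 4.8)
The plan is to reduce the question to a single scalar function. By the discussion preceding the statement, the cavitating profile $(\dot\varphi,\varphi/s)$ of Theorem~\ref{CAVSOLEXISTSP} connects to a uniformly deformed state through an outgoing shock at some speed $\sigma\in(0,T)$ if and only if the function $p$ of \eqref{SHKCOND}, i.e. $p(s)=R(\dot\varphi(s),\varphi(s)/s,s)$ with $R$ as in \eqref{SHOCKR}, has a zero in $(0,T)$; by \eqref{MONT2} the curve $s\mapsto(\dot\varphi(s),\varphi(s)/s)$ stays in $\{0<a<b\}$, so $p$ is continuous on $(0,T)$. First I would determine the two boundary limits $\lim_{s\to0^+}p(s)$ and $\lim_{s\to T^-}p(s)$; part (ii) then follows from the intermediate value theorem once these have opposite signs, and part (i) is read off from the second limit when $A=B$. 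Throughout I would use the mean value identity, valid for $0<a<b$,
\[
\frac{\Phi_1(a,b)-\Phi_1(b,b)}{a-b}=\int_0^1\Phi_{11}\big(b+t(a-b),b\big)\,dt ,
\]
together with the monotonicity $\Phi_{111}<0$ coming from \eqref{GHPROP2}.

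The hard part is the limit as $s\to0^+$: by \eqref{PHIATZERO} the state $(\dot\varphi(s),\varphi(s)/s)\to(0,+\infty)$ leaves the domain on which $R$ is continuous, so the sign of $p$ near $0$ must be extracted by an explicit asymptotic estimate. Using $\Phi_{11}(\tau,b)=g''(\tau)+b^{2d-2}h''(\tau b^{d-1})$ in the identity above and the substitution $u=(b+t(a-b))b^{d-1}$ in the $h''$--integral (recall $v=\dot\varphi(\varphi/s)^{d-1}$ from \eqref{VDEF}), I would rewrite
\[
p(s)=\frac{g'(\dot\varphi)-g'(\varphi/s)}{\dot\varphi-\varphi/s}+\frac{(\varphi/s)^{d-1}}{\dot\varphi-\varphi/s}\Big(h'(v)-h'\big((\varphi/s)^{d}\big)\Big)-s^{2}.
\]
By \eqref{GHPROP1}, \eqref{GHPROP2} the first quotient is $\geq 0$; as $s\to0^+$ one has $\dot\varphi\to0$, $\varphi/s\to+\infty$, $v\to v_0>0$, so $\dot\varphi-\varphi/s\sim-\varphi/s$ and, by \eqref{DHATINF}, $h'((\varphi/s)^{d})\to+\infty$, whence the middle term is asymptotic to $(\varphi/s)^{d-2}h'((\varphi/s)^{d})\to+\infty$ while $s^{2}\to0$. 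Therefore $p(s)\to+\infty$, and in particular $p>0$ on a right neighbourhood of $0$ (equivalently, the candidate shock speed blows up at the cavity surface, so no connection can be made there).

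Next I would treat the limit as $s\to T^-$, which is easier: by \eqref{ABDEF} the triple $(\dot\varphi(s),\varphi(s)/s,s)\to(A,B,T)$ with $0<A\le B<\infty$, which lies in the domain of continuity of $R$, so $p(T^-)=R(A,B,T)$. From \eqref{QABLIM} and the definition \eqref{QDEFSP} of $Q=s^2-\Phi_{11}$, the vanishing $Q\to0$ forces $T^2=\Phi_{11}(A,B)$. If $A=B$, then $p(T^-)=\Phi_{11}(B,B)-T^2=0$; since moreover $\dot\varphi(s)\to B$ and $\varphi(s)/s\to B$, the profile $\varphi$ matches onto $\varphi_+(s)=Bs$ at $s=T$ with continuous first derivative, which is exactly the continuous connection asserted in (i). If $A<B$, then by the mean value identity and $\Phi_{111}<0$ (so $\Phi_{11}(\cdot,B)$ is strictly decreasing and $B+t(A-B)>A$ for $t\in[0,1)$),
\[
p(T^-)=\int_0^1\Phi_{11}\big(B+t(A-B),B\big)\,dt-\Phi_{11}(A,B)<0 .
\]

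Finally, to finish (ii): when $A<B$, $p$ is continuous on $(0,T)$ with $p(0^+)=+\infty>0>p(T^-)$, so the intermediate value theorem gives $\sigma\in(0,T)$ with $p(\sigma)=0$. Setting $a_-=\dot\varphi(\sigma)<b:=\varphi(\sigma)/\sigma$ and taking the right state to be the uniform deformation $\varphi_+(s)=bs$, i.e. $a_+=b_+=b$, the Rankine--Hugoniot relations hold by the very definition of $p$. It remains to check Lax admissibility: since $\sigma^2=\frac{\Phi_1(b,b)-\Phi_1(a_-,b)}{b-a_-}=\int_0^1\Phi_{11}\big(a_-+t(b-a_-),b\big)\,dt$ and $\Phi_{11}(\cdot,b)$ is strictly decreasing, $\sigma^2$ lies strictly between $\Phi_{11}(b,b)$ and $\Phi_{11}(a_-,b)$, which is precisely \eqref{LX2FAM}; and since $a_+=b>a_-$ this is a nontrivial, outgoing shock of the second characteristic family. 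Together with the case $A=B$ this yields both assertions. The one delicate point in the whole argument is the $s\to0^+$ asymptotics, where one must leave the regime of continuity of $R$ and argue by hand.
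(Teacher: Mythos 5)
Your proof is correct and follows essentially the same route as the paper: define $p(s)=R(\dot\varphi,\varphi/s,s)$, show $p(s)\to+\infty$ as $s\to0^+$ using \eqref{PHIATZERO} and \eqref{DHATINF}, show $p(T^-)=R(A,B,T)$ equals $0$ when $A=B$ and is negative when $A<B$ via the mean-value representation of the difference quotient together with $\Phi_{111}<0$, then invoke the intermediate value theorem. The only (harmless) difference is that you spell out the Lax admissibility check for the resulting shock directly via $\sigma^2=\int_0^1\Phi_{11}(a_-+t(b-a_-),b)\,dt\in(\Phi_{11}(b,b),\Phi_{11}(a_-,b))$, whereas the paper defers that verification to the subsequent uniqueness theorem.
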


\begin{proof}
Recalling \eqref{SHOCKR}, we set for $s\in(0,T)$
\begin{equation}\label{pDEF}
\begin{aligned}
\!    p(s)&:=R(\avar,\bvart,s)\\[3pt]
 &= \bigg(\frac{g'(\avar)-g'(\bvar)}{\avar-\bvart}\bigg) + \Big(\bvar\Big)^{2d-2}
 \bigg(\frac{h'(\avar (\bvart)^{d-1})-h'((\bvart)^d)}{\avar (\bvart)^{d-1}-(\bvart)^d}\bigg) -
 s^2\,.
\end{aligned}
\end{equation}
 We observe that by \eqref{GHPROP1}, \eqref{DHATINF}, \eqref{PHIATZERO}, \eqref{MONT2}-\eqref{QABLIM}, \eqref{ABDEF} and \eqref{pDEF}:
\begin{itemize}
\item [$(1)$] As $s \to 0+$
\begin{equation}\label{pATZERO}
    p(s) \; \sim  \; \Big(\frac{s}{\varphi}\Big) \SP g'(\bvart) + \Big( \bvar \Big)^{d-2} h'((\bvart)^d)
    \, \to \, +\infty\, .
\end{equation}

\item [$(2)$] As $s \to T_{-}$
\begin{align}
p(s) \, &\to \, R(A,B,T)\label{pLIM}\\
Q(s) \, &\to \, T^2 - \Phi_{11}(A,B) = 0 \,.\label{QLIM}
\end{align}
\end{itemize}

\par\smallskip

Now, denote $p(T):= \lim_{s\to T_{-}} p(s) = R(A,B,T).$
If $A=B$, then  by \eqref{QLIM}
\begin{equation*}
p(T)= R(B,B,T) = \Phi_{11}(B,B) - T^2 = 0 \,.
\end{equation*}
On the other hand, if  $A<B$, then \eqref{GHPROP2} implies $\Phi_{111}<0$ and by \eqref{pLIM}, \eqref{QLIM}
\begin{equation*}
\begin{aligned}
p(T)=R(A,B,T) & \,= \, \frac{\Phi_{1}(A,B)-\Phi_{1}(B,B)}{A-B} - T^2 \\
         & \,= \, \Phi_{11}(C^*,B)-\Phi_{11}(A,B) \, <\, 0
\end{aligned}
\end{equation*}
for some $C^*\in(A,B)$. Then \eqref{pATZERO} implies that there exists $\sigma\in(0,T)$
such that $p(\sigma)=0$.
\end{proof}

\section{Necessity and uniqueness of the precursor shock} \label{necshock}

 In this section we show that there is uniqueness within the method of construction of cavitating solutions and that (when $d=2,3$) the connection must happen through a Lax-admissible shock.

\par\smallskip

We first prove that there exists at most one point in $(0,T]$ where $p(s)$ (defined in \eqref{pDEF}) vanishes and thus a unique connection (of the cavitating solution with a uniform deformation) occurs.

\begin{theorem}\label{UNIQCPT}
 Let $d\ge 2$ and $\Phi$ satisfy \eqref{spseis}-\eqref{GHPROP2}.
In Theorem \ref{EXISTCPT} there exists a unique  $\sigma \in(0,T]$ satisfying $p(\sigma) = 0$.
\begin{itemize}
\item[(i)]
If $\sigma<T$ then the connection happens via a Lax admissible shock.
\item[(ii)]
If $\sigma=T$ then the connection occurs via a sonic singularity at which $A=B$.
\end{itemize}
\end{theorem}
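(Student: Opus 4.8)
The plan is to analyze the function $p(s)$ on $(0,T]$ and show it has a single zero, exploiting the monotonicity properties of the cavitating solution established in Theorem \ref{CAVSOLEXISTSP}. Write $p(s) = R(\dot\varphi(s), \tfrac{\varphi(s)}{s}, s)$ where $R$ is given by \eqref{SHOCKR}. Along the solution trajectory we have $a = \dot\varphi$ increasing, $b = \tfrac{\varphi}{s}$ decreasing, $a < b$ on $(0,T)$ by \eqref{MONT1}--\eqref{QSIGN}, and $Q = s^2 - \Phi_{11}(a,b) < 0$ on $(0,T)$ with $Q \to 0$ as $s \to T_-$. Since $p(s) = R(a,b,s)$ and $R(a,b,s) = R_0(a,b) - s^2$ where $R_0(a,b) = \tfrac{\Phi_1(a,b) - \Phi_1(b,b)}{a-b}$ is the mean-value slope, the key is that $R_0$ exceeds $\Phi_{11}(a,b)$: by the mean value theorem $R_0(a,b) = \Phi_{11}(\xi, b)$ for some $\xi \in (a,b)$, and since $\Phi_{111} < 0$ (softening), $\Phi_{11}(\xi,b) > \Phi_{11}(b,b)$... but we actually want to compare with $\Phi_{11}(a,b)$, giving $R_0(a,b) = \Phi_{11}(\xi,b) < \Phi_{11}(a,b)$ since $\xi > a$. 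Hence $p(s) = \Phi_{11}(\xi,b) - s^2 < \Phi_{11}(a,b) - s^2 = -Q(s)$, so wherever $Q(s) < 0$ we only get $p(s) < -Q(s)$, which is not yet a sign. I will instead track the derivative of $p$ along the flow.

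First I would compute $\dot p(s)$ using \eqref{SSYSSP}, i.e. $\dot a = \tfrac{(d-1)}{sQ}(a-b)P(a,b)$ and $\dot b = \tfrac{1}{s}(a-b)$, together with $\partial_s R = -2s$. The claim to aim for is that $p$ is \emph{strictly decreasing} wherever $p(s) > 0$ — equivalently, $p(s_0) \le 0 \Rightarrow \dot p(s_0) < 0$ (or some monotone-crossing statement of this type), which immediately forces uniqueness of the zero since $p(0+) = +\infty$ by \eqref{pATZERO}. Concretely, $\dot p = \partial_a R_0 \cdot \dot a + \partial_b R_0 \cdot \dot b - 2s$; the dangerous term is $\partial_a R_0 \cdot \dot a$ which carries a $1/Q$ blow-up, but since $Q < 0$ strictly on $(0,T)$, the sign is controlled there. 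One shows $\partial_a R_0(a,b) > 0$ (this is a convexity-type statement following from $\Phi_{111} < 0$ and an integral representation of $R_0$), so $\partial_a R_0 \cdot \dot a < 0$ because $\dot a > 0$... wait, $\dot a > 0$ and $\partial_a R_0 > 0$ gives a positive contribution; so instead one needs the cancellation with $-2s$ and the $\partial_b R_0 \dot b$ term. The cleanest route: substitute $\dot a$, clear the denominator $Q$, and express $s^2 Q \dot p$ as a sum whose sign can be read off using $a < b$, $P > 0$, $\Phi_{111} < 0$, $Q < 0$, and the hypotheses \eqref{GHPROP1}--\eqref{GHPROP2}. This algebraic sign computation is the main obstacle and will require the structural form \eqref{spseis} of $\Phi$.

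Once the monotone-crossing property is in hand, existence of $\sigma$ comes from Theorem \ref{EXISTCPT} and uniqueness is immediate: if $p$ vanished at two points $\sigma_1 < \sigma_2$ in $(0,T]$, then between them $p$ would have to return upward after going down, contradicting that $p$ strictly decreases through each of its zeros. For part (i), when $\sigma < T$ we have $Q(\sigma) = \sigma^2 - \Phi_{11}(a(\sigma),b(\sigma)) < 0$, i.e. $\sigma < \sqrt{\Phi_{11}(a_-,b)}$, and from $p(\sigma) = 0$ we get $\sigma^2 = R_0(a_-,b) = \Phi_{11}(\xi,b)$ with $\xi \in (a_-,b)$, whence $\sigma > \sqrt{\Phi_{11}(b,b)} = \sqrt{\Phi_{11}(a_+,b)}$ since $a_+ = b$ for the uniform state on the right and $\Phi_{111} < 0$; combining, $\sqrt{\Phi_{11}(a_+,b)} < \sigma < \sqrt{\Phi_{11}(a_-,b)}$, which is exactly the Lax condition \eqref{LX2FAM}. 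For part (ii), if $\sigma = T$ then by \eqref{QABLIM} $Q(T) = T^2 - \Phi_{11}(A,B) = 0$, and $p(T) = R(A,B,T) = 0$ together with $Q(T)=0$ forces (via the same mean-value identity, since $R_0(A,B) = \Phi_{11}(\xi,B)$ with $\xi \in (A,B)$ strictly when $A < B$, giving $R_0(A,B) \ne \Phi_{11}(A,B)$) the equality $A = B$, i.e. the connection is at the sonic point $s = \sqrt{\Phi_{11}(B,B)}$ with no genuine jump.
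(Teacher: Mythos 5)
Your overall strategy is the same as the paper's: show that at any zero $\sigma$ of $p$ one has $\dot p(\sigma) < 0$, so that $p$ can cross zero at most once on $(0,T]$, and then read off the Lax inequalities for part (i) and force $A=B$ for part (ii). Your treatment of (i) and (ii) via the mean-value theorem applied to $\Phi_{11}$ with $\Phi_{111}<0$ is correct and essentially what the paper's earlier computations give.

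However, the proof has a genuine gap at exactly the step you yourself flag as ``the main obstacle'': you never carry out the sign computation for $\dot p$. You propose to clear the denominator $Q$ from $\dot a$ and inspect the sign of $s^2 Q \dot p$, but you do not execute this, and without it there is no proof of uniqueness. The paper handles this differently and more cleanly: rather than clearing the $1/Q$ from $\dot a$, it differentiates the product $p \cdot(a-b)$. Because the ODE \eqref{SSYSSP} expresses $\big(s^2-\Phi_{11}(a,b)\big)\dot a$ directly as $\tfrac{d-1}{s}(a-b)P(a,b)$, the potentially singular $\dot a$ only ever appears multiplied by its coefficient $Q$, and no $1/Q$ survives. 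After dividing by $(a-b)<0$ one arrives at the identity \eqref{DpSIGN},
\begin{equation*}
\dot p + p\,\frac{\dot a - \dot b}{a-b}
= -\frac{d-1}{s}\Big[\frac{\Phi_1(a,b)-\Phi_2(a,b)}{a-b}+\Phi_{12}(b,b)\Big]
  - \frac{s^2+\Phi_{11}(b,b)}{s},
\end{equation*}
whose right-hand side is shown to be strictly negative by expanding the bracket using the special form \eqref{spseis} and the mean-value theorem for $g'$ and $h'$, yielding $g''(c^*) + b^{2d-3}h''(v^*)(b-a) + b^{2d-2}h''(b^d) > 0$. Evaluated at a zero of $p$, this immediately gives $\dot p(\sigma)<0$. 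You should either reproduce this $(a-b)$-multiplication trick or complete your own $Q$-clearing computation; as written, the uniqueness claim — the main content of the theorem — is asserted but not proved.
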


\begin{proof}\vspace{3pt}
Consider $p(s)$ for $s\in(0,T)$. Multiply \eqref{SHKCOND} by $(a-b)$ and then differentiate
to get
\begin{equation*}
\begin{aligned}
\dot{p}(a-b) + p (\dot{a}-\dot{b}) &=
 -\big(s^2-\Phi_{11}(a,b)\big) \dot{a}\\
& \quad +(d-1)\big(\Phi_{12}(a,b)-\Phi_{12}(b,b)\big)\dot{b}\\
 & \quad + \big(s^2 - \Phi_{11}(b,b)\big)\dot{b} -
2s (a-b)\,.
\end{aligned}
\end{equation*}
Recall that
\begin{equation}\label{ODETMP}
\begin{aligned}
    \big(s^2-\Phi_{11}(a,b)\big)\dot{a} = (d-1)P(a,b)\dot{b}, \quad
                        \dot{b} = \frac{1}{s}(a-b), \quad s\in(0,T)
    \end{aligned}
\end{equation}
and hence
\begin{equation*}
\begin{aligned}
\dot{p}(a-b) &+ p (\dot{a}-\dot{b})  \\
&=-(d-1) \Big[ P(a,b) -\Phi_{12}(a,b)+\Phi_{12}(b,b)\Big] \dot{b} - \big(s^2 +
\Phi_{11}(b,b)\big)\dot{b}\,.
\end{aligned}
\end{equation*}
Then, divide the result by $(a-b)<0$ and use \eqref{ODETMP} to conclude for
$s\in(0,T)$
\begin{equation}\label{DpSIGN}
\begin{aligned}
&\dot{p} + p \bigg( \frac{\dot{a} - \dot{b} }{a-b} \bigg) =\\[0pt]
&=-\frac{(d-1)}{s} \Bigg[\frac{\Phi_1(a,b)-\Phi_2(a,b)}{a-b} +\Phi_{12}(b,b) \bigg]
 - \big(s^2 + \Phi_{11}(b,b)\big)\frac{1}{s} <  0\,.
\end{aligned}
\end{equation}
To obtain the sign on the left hand side of \eqref{DpSIGN} we note that by  \eqref{spseis} and \eqref{GHPROP1}
\begin{equation*}
\begin{aligned}
&\frac{\Phi_{1}(a,b)-\Phi_2(a,b)}{a-b} + \Phi_{12}(b,b) \\ &\qquad \qquad = \,\frac{g'(a)-g'(b)}{a-b} +b^{d-2}\big(h'(b^d)- h'(ab^{d-1})\big) +
b^{2d-2}
h''(b^d)\\[2pt]
&\qquad\qquad = \, g''(c^*) +b^{2d-3}h''(v^*)\big(b - a\big) + b^{2d-2} h''(b^d)  \, >\, 0
\end{aligned}
\end{equation*}
for some $c^* \in (a,b)$ and $v^*\in (ab^{d-1},b^d)$.

Let now $\sigma\in(0,T]$ satisfying $p(\sigma)=0$. There exists at least one such point
according to Theorem \ref{EXISTCPT}.  On the other hand, \eqref{DpSIGN} implies that
at any $\sigma \in (0,T)$ where $p(s)$ vanishes we have that $\dot p (\sigma) < 0$.
Therefore, there can be at most one such point in $(0,T]$.

The discussion in section \ref{secselfsimilar}(c) indicates that the shock will satisfy
the Lax shock admissibility condition if $\Phi_{111} < 0$. For stored energies of the type \eqref{spseis}
this amounts to condition \eqref{GHPROP2}.
\end{proof}

\par\smallskip

We next show that for $d\in\{2,3\}$ the nontrivial solution $(v,\varphi)$ in Theorem \ref{CAVSOLEXISTSP} {\it cannot} be connected continuously to a uniformly deformed state. In view of Theorem \ref{UNIQCPT}, this means that the cavitating solution
is always associated with a unique Lax-admissible precursor shock for dimensions $d=2,3$. (The method of proof breaks down for $d \ge 4$.)

\begin{theorem}\label{MAINRESULTD3}  Assume $d \in\{2,3\}$ and $\Phi$ satisfies \eqref{spseis}-\eqref{DHATINF}. Let $(v,\varphi)$ be the solution to \eqref{ODEIVPSYS} constructed in Theorem \ref{CAVSOLEXISTSP} and let $(a,b)=(\dot{\varphi},\frac{\varphi}{s})$. Let $A,B$ be the limits defined in \eqref{ABDEF}. Then $A<B$ and, as a   consequence,
\begin{itemize}

\item[$(i)$]  $(a,b)$, the solution to \eqref{SSYSSP}, cannot be connected continuously to a uniformly deformed state at any $s\in(0,T]$\,;

\item[$(ii)$] \SP there exists unique point $\sigma\in(0,T)$ at which the solution $(a,b)$ can be connected to a uniformly deformed state associated with ${\varphi}_+(s)= b(\sigma)s $ through a Lax-admissible shock.
\end{itemize}
\end{theorem}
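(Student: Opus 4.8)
The entire statement reduces, via Theorems \ref{EXISTCPT} and \ref{UNIQCPT}, to the single claim that $A<B$ (equivalently $c_0<0$ in \eqref{QABLIM}); once $A<B$ is known, part $(i)$ follows because a continuous connection is only possible when $A=B$, and part $(ii)$ is exactly the content of Theorem \ref{EXISTCPT}$(ii)$ together with the uniqueness from Theorem \ref{UNIQCPT}. So the plan is to argue by contradiction: suppose $A=B$, i.e. $(a,b)(s)\to(B,B)$ as $s\to T_-$, with $Q(s)=s^2-\Phi_{11}(a,b)\to T^2-\Phi_{11}(B,B)=0$ by \eqref{QLIM}. The strategy is to extract a sharp rate at which $Q\to 0$ and at which $a-b\to 0$, and show these rates are incompatible when $d\in\{2,3\}$.

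First I would set up the two basic ODEs near $s=T$. From \eqref{SSYSSP}, $\dot b=\frac1s(a-b)<0$ and $\dot a=\frac{d-1}{s}\frac{(a-b)P(a,b)}{Q}>0$ since $a-b<0$, $P>0$, $Q<0$; hence $\frac{d}{ds}(a-b)=\frac{(a-b)}{sQ}\big[(d-1)P-Q\big]$. Under the hypothesis $A=B$ we have $P(a,b)\to\Phi_{11}(B,B)=T^2>0$ and $(d-1)P-Q\to(d-1)T^2>0$, so $\frac{d}{ds}\log|a-b|\sim \frac{(d-1)P-Q}{sQ}$, and since $Q\to0^-$ the right side blows up to $-\infty$; thus $|a-b|$ decays faster than any power, in fact $\log|a-b|\to$ a rate controlled by $\int^{T}\frac{dr}{Q(r)}$. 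On the other hand, I computed in the proof of Theorem \ref{CAVSOLEXISTSP} that
\begin{equation*}
\tfrac12\tfrac{d}{ds}Q^2 = 2sQ-\tfrac{d-1}{s}(a-b)\big[P\,\Phi_{111}+\Phi_{112}Q\big],
\end{equation*}
and near $s=T$ the bracket tends to $\Phi_{111}(B,B)<0$ (here \eqref{GHPROP2} is essential), so $\tfrac{d}{ds}Q^2 = 4sQ - \tfrac{2(d-1)}{s}(a-b)\,\Phi_{111}(B,B) + o(\cdot)$. Since $Q<0$ and $a-b<0$, both terms on the right are negative near $T$, so $Q^2$ is \emph{decreasing}, consistent with $Q\to0$; the point is to get the precise rate. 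Dividing, $\tfrac{d}{ds}Q^2 \sim 4sQ + c_1|a-b|$ with $c_1>0$, and because $|a-b|$ is super-polynomially small the leading balance is $\tfrac{d}{ds}(Q^2)\sim 4TQ$, i.e. $\tfrac{d}{ds}|Q| \sim 2T$ (using $Q<0$, so $Q^2=|Q|^2$ and $2|Q|\tfrac{d}{ds}|Q|\sim -4T|Q|$, giving $\tfrac{d}{ds}|Q|\sim -2T$)—hence $|Q(s)|\sim 2T(T-s)$, a \emph{linear} vanishing of $Q$.

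Next I would feed this back: with $Q(s)\approx -2T(T-s)$, the equation $\frac{d}{ds}\log|a-b|\sim \frac{(d-1)T^2}{sQ}\sim -\frac{(d-1)T}{2(T-s)}$ integrates to $|a-b|\sim C(T-s)^{(d-1)T/2}\cdot(\text{correction})$; wait, more carefully the exponent is $(d-1)T^2/(2T\cdot T)=(d-1)/2$ after including the $1/s\sim1/T$ factor, so $|a-b|(s)\sim C(T-s)^{(d-1)/2}$. This is the key dichotomy. Now I would test consistency against the behaviour of $\dot a$: from $\dot a=\frac{d-1}{s}\frac{(a-b)P}{Q}\sim \frac{d-1}{T}\cdot\frac{C(T-s)^{(d-1)/2}\cdot T^2}{-2T(T-s)} = -C'(T-s)^{(d-1)/2-1}$. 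For $a(s)\to A=B$ finite we need this integrable near $T$, which holds iff $(d-1)/2-1>-1$, i.e. always — so that check alone does not close the argument, and this is where the real obstacle lies: one must instead examine the next-order correction in the $Q^2$ equation, where the $|a-b|$ term competes with the $sQ$ term. When $d=2,3$ the exponent $(d-1)/2\le 1$ makes $|a-b|$ vanish \emph{no faster} than $(T-s)$, so the term $\tfrac{d-1}{s}(a-b)\Phi_{111}$ in $\tfrac{d}{ds}Q^2$ is of order $(T-s)^{(d-1)/2}$, which is \emph{larger} than $Q^2\sim(T-s)^2$ for $d\le3$; this forces $Q^2$ to inherit the larger rate $(T-s)^{(d-1)/2}$, contradicting the assumed linear rate unless the coefficient vanishes — but $\Phi_{111}(B,B)<0$ strictly, contradiction. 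For $d\ge4$, $(d-1)/2\ge 3/2$ could still be $\le 2$ or $>2$ depending; the borderline is where the method breaks, which is why the theorem is restricted to $d\in\{2,3\}$.

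\textbf{Main obstacle.} The delicate part is making the asymptotic balance rigorous rather than heuristic: one is dealing with a singular ODE where $Q\to0$ in the denominator, and the two small quantities $Q$ and $a-b$ are coupled, so I would not manipulate leading-order symbols but instead derive rigorous differential inequalities — e.g. bounding $\tfrac{d}{ds}Q^2$ above and below by $4sQ\pm c|a-b|$ with explicit $c$, integrating to get two-sided bounds $c_-(T-s)\le |Q(s)|\le c_+(T-s)$, then two-sided bounds $(T-s)^{(d-1)/2+\eps}\le|a-b|\le(T-s)^{(d-1)/2-\eps}$, and finally returning to the $Q^2$ identity to see that the $|a-b|$ contribution, being of order at least $(T-s)^{(d-1)/2-\eps}\gg (T-s)^2$ when $d\in\{2,3\}$ and $\eps$ small, forces $|\tfrac{d}{ds}Q^2|$ to be much larger than what $|Q|\sim(T-s)$ allows. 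I expect the bookkeeping of these inequalities, and verifying that the exponent comparison $(d-1)/2<2$ is exactly $d<5$ while the argument as structured needs the strict gap only for $d\le 3$ (the case $d=4$ being genuinely borderline and excluded), to be the technically demanding core of the proof.
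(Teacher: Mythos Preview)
Your reduction to the single claim $A<B$ is correct, and the idea of deriving a contradiction from the coupled rates of vanishing of $Q$ and $a-b$ is the right instinct. However, the specific contradiction you extract does not close for $d=3$, so the argument as written has a genuine gap.

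The issue is the comparison you make at the end. With $|Q|\sim 2T(T-s)$ and $|a-b|\sim C(T-s)^{(d-1)/2}$, the two competing terms in $\tfrac{d}{ds}Q^2$ are $4sQ$, of order $(T-s)^1$, and the $(a-b)$--term, of order $(T-s)^{(d-1)/2}$. For $d=3$ these are the \emph{same} order, so integrating gives $Q^2\sim(T-s)^2$, i.e.\ $|Q|\sim(T-s)$, which is perfectly consistent with your starting ansatz; there is no contradiction at the level of exponents. (Your sentence ``larger than $Q^2\sim(T-s)^2$'' compares against the wrong quantity; the relevant balance is in $\tfrac{d}{ds}Q^2$, not in $Q^2$ itself.) For $d=3$ the obstruction lives in the \emph{coefficients}, not the exponents, and your rigorization plan via two-sided differential inequalities will not detect it. Your argument may be salvageable for $d=2$, where $(d-1)/2=1/2<1$ genuinely forces inconsistent exponents, but it breaks at the physically important case $d=3$.

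The paper sidesteps the bootstrap entirely by working with the ratio $H:=\dfrac{a-b}{Q}$. One shows $s^3H$ is eventually monotone, so $H\to c\in(0,\infty]$. Since both numerator and denominator tend to zero, l'H\^opital applied to $1/H=Q/(a-b)$ gives
\[
\frac{1}{c}=\lim_{s\to T_-}\frac{\dot Q}{\dot a-\dot b}=\frac{2}{d-1}\cdot\frac{1}{c}-\Phi_{111}(B,B)
\]
when $c<\infty$, i.e.\ $-\Phi_{111}(B,B)\,c=\dfrac{d-3}{d-1}\le 0$, contradicting $\Phi_{111}<0$, $c>0$; and $0=-\Phi_{111}(B,B)>0$ when $c=\infty$. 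This is exactly the coefficient-level self-consistency check that your exponent comparison misses at $d=3$, and it treats $d=2,3$ uniformly without any iterated bounds.
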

\begin{proof}
The proof utilizes the properties \eqref{PLIM} and \eqref{PMPHI11LIM} of the function $P(a,b)$ proved  in the appendix.
Since $Q<0$, $s\in(0,T)$, we rewrite \eqref{SSYSSP} in the equivalent form
\begin{equation}\label{ODEH}
\begin{aligned}
    \dot{a} = \frac{(d-1)}{s}P(a,b)H(s), \quad
                        \dot{b} = \frac{1}{s}(a-b) \, ,
                            \end{aligned}
\end{equation}
where the function $H(s)$ is defined by
\begin{equation}\label{HDEF}
H := \frac{a-b}{Q} = \frac{a-b}{s^2 - \Phi_{11}(a,b)} \,, \quad s\in(0,T).
\end{equation}

To prove that $A<B$, we will argue by contradiction. Suppose that
\begin{equation}\label{ABEQ}
 A = B =: \lambda
\end{equation}

\smallskip
\noindent
 {\it Step 1.} We will prove that
\eqref{ABEQ} implies that there exists $\eps>0$ such that
\begin{equation}\label{HMONPROP}
    H(s)>0, \quad \frac{d}{ds}\bigl( s^3 H(s)\bigr) > 0 \,, \quad
    s\in(T-\eps,T) \,.
\end{equation}
and, as a consequence, we must have {\it either}
\begin{equation}\label{HLIM}
\begin{aligned}
&\qquad\lim_{s\to T^-} H(s) = c \quad \mbox{for some} \quad 0<c<\infty \\[-3pt]
&\mbox{\it or}\\[-3pt]
&\qquad\lim_{s\to T^-} H(s) = +\infty.
\end{aligned}
\end{equation}

Indeed, by \eqref{QSIGN} we have $H(s)>0$ for $s\in(0,T)$. Next, by
\eqref{QDEFSP}, \eqref{ODEH} we have
\begin{equation}\label{DQ}
\begin{aligned}
\dot{Q} & \, = \, 2s - \big[ \Phi_{111}(a,b)\dot{a} + (d-1)\Phi_{112}\dot{b}\Big]\\
        &\, = \, 2s -(d-1) H \big[\Phi_{111}(a,b) P(a,b)+\Phi_{112}(a,b)Q\big]\frac{1}{s}\\
        &\, = \,  \big[ Q+\Phi_{11}(a,b)\big]\frac{2}{s}  - (d-1) H \big[\Phi_{111}(a,b) P(a,b) + \Phi_{112}(a,b)Q
        \big]\frac{1}{s}
\end{aligned}
\end{equation}
and  using \eqref{ODEH}, \eqref{HDEF} we obtain
\begin{equation*}
\begin{aligned}
 \frac{dH}{ds}  &= \frac{\dot{a}-\dot{b}}{Q} - \frac{\dot{Q}(a-b)}{Q^2}\\[2pt]
& =\frac{(d-1)}{s} \frac{H}{Q}P(a,b) - \frac{H}{s} - \frac{H}{Q}\dot{Q}\\[2pt]
& = (d-1)\frac{H}{Q} \big[ P(a,b) - \frac{2}{d-1}\Phi_{11}(a,b)\big]\frac{1}{s} \\[2pt]
&\,\quad + (d-1)\frac{H^2}{Q} \big[\Phi_{111}(a,b)P(a,b) + \Phi_{112}(a,b)Q
\big]\frac{1}{s} -\frac{3H}{s}
\end{aligned}
\end{equation*}
Rearranging the terms in the resulting expression, we obtain
\begin{equation}\label{DH}
\begin{aligned}
&\frac{1}{s^2(d-1)}\frac{d}{ds}  \bigl(s^3 H \bigr)= \\[2pt]
&\quad\qquad  =  \frac{H}{Q} \big[ P(a,b) -
\frac{2}{d-1}\Phi_{11}(a,b)\big]   + \frac{H^2}{Q} \big[\Phi_{111}(a,b)P(a,b) + \Phi_{112}(a,b)Q \big]\\[3pt]
&\quad\qquad  = H^2 \bigg(\frac{P(a,b)-\Phi_{11}(a,b)}{a-b} + \Phi_{112}(a,b)\\
& \quad\qquad \qquad\qquad + \frac{1}{Q} \Big[\Phi_{111}(a,b)P(a,b)- \frac{1}{H}\Bigl(\frac{3-d}{d-1}\Bigr)\Phi_{11}(a,b) \Big]\biggr) \SP.\\
\end{aligned}
\end{equation}

\par\smallskip

Now, observe that \eqref{PLIM}, \eqref{ABDEF}, and the assumption \eqref{ABEQ}
imply
\begin{equation}\label{INFPART}
\begin{aligned}
\lim_{s\to T_{-}} \big( \Phi_{111}(a,b)P(a,b) \big)=
\Phi_{111}(\lambda,\lambda)\Phi_{11}(\lambda,\lambda)  <  0 \,.
\end{aligned}
\end{equation}
Recall that $H>0$. Hence by \eqref{QSIGN}, \eqref{QABLIM}$_1$, \eqref{DPHISIGN1}$_1$, \eqref{INFPART}
and $d\in\{2,3\}$ we have
\begin{equation}\label{INFPARTLIM}
 \lim_{s\to T_{-}} \frac{1}{Q} \Big[\Phi_{111}(a,b)P(a,b)- \frac{1}{H}\Bigl(\frac{3-d}{d-1}\Bigr)\Phi_{11}(a,b) \Big] = +\infty.
\end{equation}
Also, by \eqref{ABDEF}, \eqref{ABEQ}, and \eqref{PMPHI11LIM}
\begin{equation}\label{FINITEPART}
\begin{aligned}
  \lim_{s\to T_{-}} \bigg( \frac{P(a,b)-\Phi_{11}(a,b)}{a-b}& + \Phi_{112}(a,b) \bigg)
  =  \frac{1}{2} \Big(3\Phi_{112}(\lambda,\lambda) - \Phi_{111}(\lambda,\lambda)\Big)\,.
     \end{aligned}
\end{equation}

\par\smallskip

Finally, combining \eqref{INFPARTLIM}, \eqref{FINITEPART}, we conclude that there exists
$\eps>0$ such that the right-hand side of \eqref{DH} is positive for all
$s\in(T-\eps,T)$. This establishes \eqref{HMONPROP} and as a consequence \eqref{HLIM}.

\medskip
\noindent
{\it Step 2.}
By \eqref{QABLIM}$_1$ and \eqref{ABEQ}, both the numerator and
the denominator of $H(s)$ satisfy
\begin{equation}\label{HNDLIM}
    \lim_{s\to T_{-}} \bigl(a(s) - b(s) \bigr)=\lim_{s\to T_{-}} Q(s)=0  \, .
\end{equation}
This motivates to consider the ratio
\begin{equation}\label{rDEF}
\begin{aligned}
r(s) &:= \frac{\frac{d}{ds}Q}{\frac{d}{ds}( a - b )}
\\
&\,=\frac{2s -(d-1) H \big[\Phi_{111}(a,b)
P(a,b)+\Phi_{112}(a,b)Q\big]\frac{1}{s}}{(d-1)HP(a,b)\frac{1}{s}-(a-b)\frac{1}{s}}
\\[3pt]
&\,=\frac{ \frac{2}{d-1}\frac{s^2}{H} -
\big[\Phi_{111}(a,b)P(a,b)+\Phi_{112}(a,b)Q\big]}{P(a,b)-\frac{1}{d-1}Q}   \, ,
\end{aligned}
\end{equation}
(where we used \eqref{ODEH}, \eqref{HDEF}, \eqref{DQ})  and to  look for the limiting value of $r(s)$ as
$s\to T_{-}$. To this end we will use \eqref{HLIM} and consider two separate cases:

\par\medskip

{\it Case $1$.} Suppose that \eqref{HLIM}$_1$ holds. Observe that by
\eqref{QABLIM}$_1$
\begin{equation*}\label{QLIMMD}
0=\lim_{s\to T_{-}}Q(s)=\Phi_{11}(\lambda,\lambda)-T^{2}
\end{equation*}
and hence by \eqref{PLIM} and \eqref{ABEQ}
\begin{equation*}
\lim_{s\to T_{-}} P(a,b)= T^2\,.
\end{equation*}
Thus, \eqref{QABLIM}$_1$, \eqref{ABDEF}, \eqref{ABEQ}, and \eqref{INFPART} imply
\begin{equation*}
    \lim_{s \to T^-} r(s) =
   \frac{2}{d-1} \frac{1}{c}-\Phi_{111}(\lambda,\lambda) \, .
\end{equation*}
On the other hand, by l'H\^opital rule, we have
\begin{equation*}
0< \frac{1}{c} = \lim_{s \to T_{-}}
\frac{1}{H(s)} = \lim_{s \to T_{-}} r(s) =  \frac{2}{d-1} \frac{1}{c}-\Phi_{111}(\lambda,\lambda)
\end{equation*}
Since $c\in(0,\infty)$ and  $d\in\{2,3\}$
this leads to a contradiction
\begin{equation*}
0 \SP <  -\SP\Phi_{111}(\lambda,\lambda) \SP c = \biggl(\frac{d-3}{d-1}\biggr) \leq
0\,.
\end{equation*}
Hence \eqref{HLIM}$_1$ cannot hold.

\par\medskip

{\it Case $2$.} Suppose that \eqref{HLIM}$_2$ holds. Then \eqref{QABLIM}$_1$, \eqref{ABDEF} and
\eqref{ABEQ} imply
\begin{equation}\label{rLIM2}
    \lim_{s \to T^-} r(s) = -\Phi_{111}(\lambda,\lambda) \,.
\end{equation}
By \eqref{DPHISIGN1}$_2$, \eqref{HDEF}, \eqref{HLIM}$_2$, \eqref{HNDLIM},
\eqref{rDEF}, \eqref{rLIM2} and l'Hopital's rule we obtain
\begin{equation*}
0 = \lim_{s\to T_{-}} \frac{1}{H(s)} = \lim_{s\to T^-} r(s) = -\Phi_{111}(\lambda,\lambda)  > 0
\end{equation*}
which is again a contradiction. Thus, \eqref{HLIM}$_2$ cannot hold.

Since the hypothesis \eqref{ABEQ} leads to a contradiction, we conclude that $A < B$ for $d=2,3$ and the
proof is completed.
\end{proof}

\section {Rescaling, inner solution, and the dynamic bifurcation diagram}
\label{secbifurc}

The objective of this section is to construct the bifurcation diagram of the dynamically cavitating solution and to determine
the critical stretching at which cavitation occurs.
We first present an outline of the approach that we follow.

We use the cavitating solution constructed in
Theorem \ref{EXISTCPT}  under hypotheses \eqref{spseis}--\eqref{GGROWTH} and hypothesis \eqref{DHATINF} (relating to the boundary data
at the cavity). Recall that this solution, denoted by $(\varphi, v)(s \,;\varphi_0, v_0)$,  depends on two parameters $\varphi_0 > 0$ and $v_0$ in
\eqref{ODEIVPSYS}, and it is defined on a maximal
interval of existence $[0,T)$ with $T=T(\varphi_0,v_0)<\infty$.  By Theorem \ref{UNIQCPT} for every pair $\varphi_0,v_0>0$ there exists a unique
point
\begin{equation}\label{SIGMAMAPDEF}
\sigma=\sigma(\varphi_0,v_0) \in (0,T(\varphi_0,v_0)] \quad \mbox{that satisfies} \quad
p(\sigma;\varphi_0,v_0)=0\,,
\end{equation}
where $p$ is defined in \eqref{SHKCOND} and the identity $p(\sigma) = 0$ corresponds to the Rankine-Hugoniot jump
conditions.  We recall that if $\sigma(\varphi_0,v_0) < T (\varphi_0,v_0)$ then the connection of the cavitating solution to the uniformly deformed
state on the right happens through a shock, while if $\sigma(\varphi_0,v_0) = T (\varphi_0,v_0)$ then the cavitating solution
connects to a uniform deformation in a $C^1$ fashion through a sonic singularity. Also, that the latter possibility is excluded in Theorem \ref{MAINRESULTD3} for dimensions $d=2, 3$, but it might conceivably occur for higher dimensions. In both cases, the transversal principal stretch
$\frac{\varphi(\sigma)}{\sigma}$ at the shock (or sonic singularity) coincides with the value of external stretching associated to the forming cavity.

In view of the above, we define the mapping  $\Lambda(\varphi_0,v_0) : \RR^2_{+} \to \RR$ by
\begin{equation}\label{STRETCHDEF}
\begin{aligned}
\Lambda(\varphi_0,v_0) =  \frac{\varphi(\sigma(\varphi_0,v_0)\SP; \varphi_0,v_0)}{\sigma(\varphi_0,v_0)}.
\end{aligned}
\end{equation}
Referring to the discussion of Section \ref{secCauchy}, we recall that at the cavity the specific volume $v_0$ is connected with
the cavity velocity $\varphi_0$ via the relation
$v_0 = V ( \varphi_0)$,
where $V:[0,\infty) \to (0,\infty)$ is a continuous function that encodes the boundary condition at the cavity and has the form
\begin{equation}
\label{CSTRESSBNDV1}
V(x) :=
\begin{cases}
H = {h'}^{-1} (0) &\mbox{ for stress free cavities},  \\
h'^{-1}(G(x))   &\mbox{ for cavities with content}. \\
\end{cases}
\end{equation}
The diagram of the map
$$
\varphi_0 \mapsto \Lambda(\varphi_0, V(\varphi_0) )
$$
is precisely the bifurcation diagram of a solution with cavity.
{A numerical computation of this diagram appears in Figure \ref{DYNSTAT}; for explanation of numerical results see Appendix \ref{app3}. }
In the sequel, we study various analytical features of the bifurcation diagram. We are particularly interested in determining
the critical stretching $\lambda_{cr}$ at which a cavity opens. This will be captured by the limit
\begin{equation}\label{BIFLIM}
\lim_{\varphi_0\to 0+}\Lambda(\varphi_0,V(\varphi_0))
\end{equation}
and one objective is to determine a formula for the computation of \eqref{BIFLIM} in the dynamic case.

\subsection{Rescaling}

Let $(\varphi,v)(s\SP; \varphi_0,v_0)$  be the solution to \eqref{ODEIVPSYS}, with $\varphi_0,v_0>0$. Numerical experiments indicate
that the functions $\{v(s \SP ;\varphi_0,V(\varphi_0))\}$,
with $V$ defined by \eqref{CSTRESSBNDV1},
converge pointwise to a step function with a jump located at $s=0$ as $\varphi_0 \to 0$; see Figure \ref{DETPIC}. This indicates
an inner layer with respect to $\varphi_0$ and points to resolving the jump as the key in determining the limit \eqref{BIFLIM}.
\begin{figure}[t] \centering
\includegraphics*[width=4.5in, height=3.5 in]{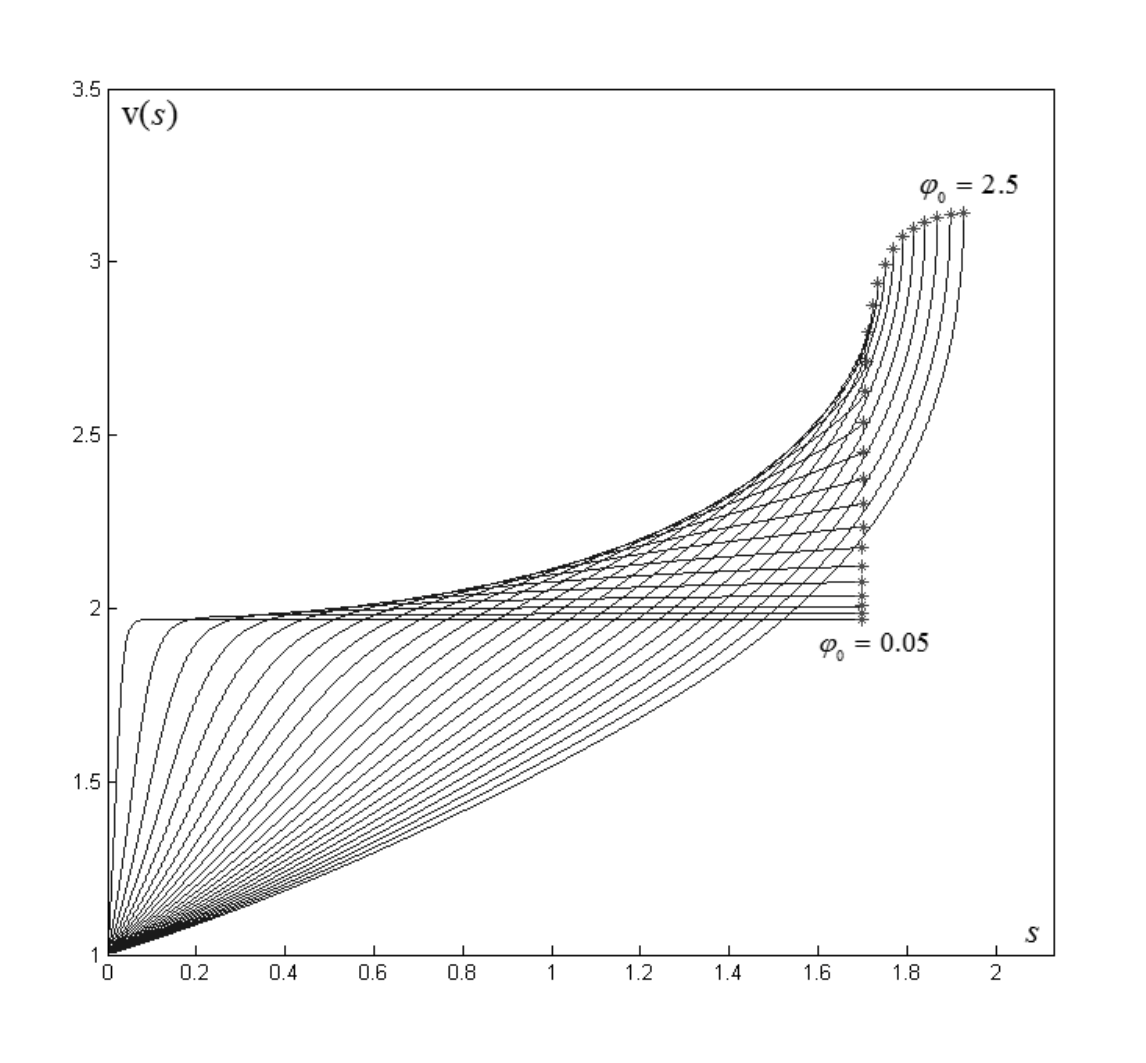}
\caption{
$v(s; \varphi_0, H)$ with $\varphi_0\in[0.05,2.5]$,
$g(x)=\tfrac{1}{2}x^2$, $h(x)=(x-1)\ln(x)$  and stress free cavity.
} \label{DETPIC}
\end{figure}

To capture the behavior near the origin we rescale $(\varphi, v )(s\SP; \varphi_0,v_0)$ with respect to the initial value $\varphi_0>0$
using the scaling transformation
\begin{equation}\label{RESCSOL}
\begin{aligned}
    \psi(\xi; \varphi_0, v_0) := \frac{\varphi(\varphi_0\xi\SP; \varphi_0,
    v_0)}{\varphi_0},\quad
    \delta(\xi; \varphi_0, v_0) := v(\varphi_0\xi\SP; \varphi_0,v_0)\,.
\end{aligned}
\end{equation}
Note that $(\psi, \delta)$ satisfy
\begin{equation*}
\begin{aligned}
\frac{\psi(\xi)}{\xi} = \frac{\varphi(s)}{s}\Bigr|_{s=\varphi_0\xi}\,, \quad
{\psi'}(\xi) = \dot\varphi(s)\Bigr|_{s=\varphi_0 \xi}\,, \quad
\delta(\xi)&=v(s)\Bigr|_{s=\varphi_0 \xi}
\end{aligned}
\end{equation*}
with $'$ denoting the differentiation with respect to $\xi=\frac{s}{\varphi_0}$.  The rescaled function $(\psi, \delta)$
is now defined on the maximal interval of existence $[0,\mathcal{T}(\varphi_0,v_0))$, where
$\mathcal{T}(\varphi_0,v_0) := \frac{1}{\varphi_0} T(\varphi_0,v_0) < \infty$ and satisfies
the initial value problem
\begin{equation}\label{ODEIVPSYSRESC}
\left\{
\begin{aligned}
\psi'(\xi) &= \delta\Big(\frac{\xi}{\psi}\Big)^{d-1}\\
\delta'(\xi) & = \bigg(\frac{d-1}{\psi}\bigg) \frac{ \big(
\frac{\xi}{\psi}\big)^{2d-3}\delta \Big( \delta \big(\frac{\xi}{\psi}\big)^{d} - 1
\Big)\Big[\xi^2 \varphi_0^2  - g''( \delta (\tfrac{\xi}{\psi})^{d-1})
\Big]}{\Bigl[-h''(\delta)+\big(\xi^2 \varphi_0^2
-g''(\delta(\tfrac{\xi}{\psi})^{d-1})\big)\big(\frac{\xi}{\psi}\big)^{2d-2}\Bigr]
}\\[2pt]
&\qquad +\bigg(\frac{d-1}{\psi}\bigg)\frac{ \big( \frac{\xi}{\psi}\big)^{d-2} \Big[g'(
\delta (\tfrac{\xi}{\psi})^{d-1})-g'(\frac{\psi}{\xi})
\Big]}{\Bigl[-h''(\delta)+\big(\xi^2 \varphi_0^2
-g''(\delta(\tfrac{\xi}{\psi})^{d-1})\big)
\big(\frac{\xi}{\psi}\big)^{2d-2}\Bigr]\SP}\\
&\begin{aligned}
\psi(0 \SP; \varphi_0,v_0)&=1\\
\delta(0\SP; \varphi_0,v_0)&=v_0
\end{aligned}
\end{aligned}\right.
\end{equation}
As \eqref{ODEIVPSYS} is equivalent to \eqref{ODESP}, the rescaled function $\psi(\xi; \varphi_0, v_0)$ will also satisfy the
second order differential equation
\begin{equation}\label{RESCODE}
\begin{aligned}
\Big(\varphi_0^2 \xi^2-\Phi_{11}(\psi',\tfrac{\psi}{\xi})\Big) \psi'' =
\frac{d-1}{\xi}\Big(\psi' -
\frac{\psi}{\xi}\Big)P(\psi',\tfrac{\psi}{\xi})\,.
\end{aligned}
\end{equation}
Theorem \ref{CAVSOLEXISTSP} now gives:

\begin{lemma}\label{resccavexist}
Let $d\geq 2$ and $\Phi$ satisfy \eqref{spseis}-\eqref{GGROWTH}. The rescaled
function $(\psi, \delta)$ in \eqref{RESCSOL}
is defined on the maximal interval of existence $[0,\mathcal{T}(\varphi_0,v_0))$, with
$\mathcal{T}(\varphi_0,v_0) := \frac{1}{\varphi_0} T(\varphi_0,v_0)$, and satisfies
\eqref{ODEIVPSYSRESC}, while $\psi\in C^2(0,\mathcal{T})$ solves \eqref{RESCODE}  on $(0,\mathcal{T})$.
Moreover,
\begin{itemize}
\item [$(i)$] \; $\psi'$, $\tfrac{\psi}{\xi}$, $(\psi' -\tfrac{\psi}{\xi})$ are
strictly monotonic and satisfy
\begin{equation}\label{MONTR1}
\psi''(\xi)>0, \quad \frac{d}{d\xi}\Big(\frac{\psi}{\xi}\Big) < 0, \quad
\frac{d}{d\xi}\Big(\psi' - \tfrac{\psi}{\xi}\Big) > 0  \quad \mbox{on} \quad
0<\xi<\mathcal{T}.
\end{equation}

\item[{$(ii)$}] \; For each \SP $0<\xi<\tau<\mathcal{T}$
\begin{equation}\label{MONTR2}
0 <  {\psi'}(\xi)  <  {\psi'}(\tau) < \frac{\psi(\tau)}{\tau}< \frac{\psi(\xi)}{\xi}\,, \quad Q({\psi'},\tfrac{\psi}{\xi},\xi
\varphi_0) <0\,.
\end{equation}

\item[$(iii)$] \; The following limits exists and satisfy \vspace{6pt}
\begin{equation*}
\begin{aligned}
\lim_{\xi\to \mathcal{T}_-}Q({\psi'},\tfrac{\psi}{\xi},\xi \varphi_0)=0, \,\quad
\lim_{\xi\to \mathcal{T}_-} {\psi'} = A \leq \lim_{\xi\to \mathcal{T}_-} \Big( \frac{\psi}{\xi} \Big)  = B  \\
\end{aligned}
\end{equation*}
for some $A,B>0$.
\end{itemize}
\end{lemma}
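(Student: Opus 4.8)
The plan is to obtain Lemma \ref{resccavexist} as an immediate corollary of Theorem \ref{CAVSOLEXISTSP}, exploiting the fact that the rescaling \eqref{RESCSOL} is a \emph{linear} change of the self-similar variable that fixes the singular point $s=0$ and leaves the principal stretches unchanged. Writing $s=\varphi_0\xi$, a one-line chain-rule computation gives
\[
\psi'(\xi)=\dot\varphi(s)\big|_{s=\varphi_0\xi},\qquad
\frac{\psi(\xi)}{\xi}=\frac{\varphi(s)}{s}\Big|_{s=\varphi_0\xi},\qquad
\delta(\xi)=v(s)\big|_{s=\varphi_0\xi},
\]
so the pair $(a,b)=(\dot\varphi,\tfrac{\varphi}{s})$ attached to $\varphi$ and the pair $(\psi',\tfrac{\psi}{\xi})$ attached to $\psi$ are literally the same functions read off at corresponding points. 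First I would substitute \eqref{RESCSOL} into \eqref{ODEIVPSYS}, keeping track of the powers of $\varphi_0$: the extra factor $\varphi_0$ produced by differentiating $v(\varphi_0\xi)$ is absorbed by the prefactor $\tfrac{d-1}{\varphi}=\tfrac{d-1}{\varphi_0\psi}$, every $s$ becomes $\varphi_0\xi$ and every $s^2$ becomes $\varphi_0^2\xi^2$, while $\tfrac{s}{\varphi}=\tfrac{\xi}{\psi}$ and $v=\delta$ are unchanged; this yields exactly \eqref{ODEIVPSYSRESC}. The same substitution in \eqref{ODESP}, equivalently the identity $Q(\psi',\tfrac{\psi}{\xi},\varphi_0\xi)=(\varphi_0\xi)^2-\Phi_{11}(\psi',\tfrac{\psi}{\xi})$ read against \eqref{QDEFSP}, produces \eqref{RESCODE}. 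The initial data $\psi(0)=1$, $\delta(0)=v_0$ follow from $\varphi(0)=\varphi_0$, $v(0)=v_0$ in \eqref{VDEF}.

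Next I would record the interval of existence: since $\xi\mapsto\varphi_0\xi$ is an increasing bijection of $[0,\mathcal T)$ onto $[0,T)$, the maximal interval for $(\psi,\delta)$ is $[0,\mathcal T)$ with $\mathcal T=\tfrac1{\varphi_0}T(\varphi_0,v_0)$, and $T<\infty$ from Theorem \ref{CAVSOLEXISTSP} gives $\mathcal T<\infty$; likewise $\psi\in C^2(0,\mathcal T)$ because $\varphi\in C^2(0,T)$ and the map is smooth with nonvanishing derivative. Finally, properties $(i)$--$(iii)$ transfer verbatim. Because $\psi'(\xi)=\dot\varphi(\varphi_0\xi)$, $\tfrac{\psi(\xi)}{\xi}=\tfrac{\varphi}{s}(\varphi_0\xi)$ and $\psi'(\xi)-\tfrac{\psi(\xi)}{\xi}=(\dot\varphi-\tfrac{\varphi}{s})(\varphi_0\xi)$, each $\tfrac{d}{d\xi}$-derivative equals $\varphi_0$ times the corresponding $\tfrac{d}{ds}$-derivative evaluated at $s=\varphi_0\xi$, so \eqref{MONT1} gives \eqref{MONTR1}; the chain of inequalities \eqref{MONT2}, read at $s=\varphi_0\xi$, $t=\varphi_0\tau$, gives the first part of \eqref{MONTR2}, and \eqref{QSIGN} together with the identity for $Q$ above gives $Q(\psi',\tfrac{\psi}{\xi},\xi\varphi_0)<0$; the limits in $(iii)$ are precisely the limits in \eqref{QABLIM} and \eqref{ABDEF} as $s\to T_-$, which corresponds to $\xi\to\mathcal T_-$.

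I do not expect a genuine obstacle here: the statement is essentially a change-of-variables bookkeeping exercise. The only point that deserves a moment's care is confirming that the prefactor $\varphi_0$ coming from the chain rule in the $\delta$-equation cancels exactly against the $1/\varphi_0$ hidden in $1/\varphi=1/(\varphi_0\psi)$, so that \eqref{ODEIVPSYSRESC} takes the stated form with the $s^2$-terms replaced consistently by $\varphi_0^2\xi^2$; everything else is inherited mechanically from Theorem \ref{CAVSOLEXISTSP}.
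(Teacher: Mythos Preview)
Your proposal is correct and matches the paper's approach: the paper states Lemma \ref{resccavexist} immediately after the sentence ``Theorem \ref{CAVSOLEXISTSP} now gives:'' and offers no separate proof, treating the result as a direct transcription of Theorem \ref{CAVSOLEXISTSP} under the rescaling \eqref{RESCSOL}. Your careful bookkeeping of the chain-rule factors and the identification $(\psi',\tfrac{\psi}{\xi})=(\dot\varphi,\tfrac{\varphi}{s})|_{s=\varphi_0\xi}$ is exactly what is implicitly being invoked.
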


Due to the form of \eqref{RESCODE} the rescaling leads to a regular perturbation problem and it is expected
that the limit has  a globally defined solution. Below, we study this limiting process.

\subsection{Uniform bounds for the rescaled solutions}

In the sequel we impose the condition that for some $\nu > 0$
\begin{equation}\tag{H5}\label{D2HGROWTH}
   \Phi_{11} (x, x) =  g''(x) + x^{2d -2}  h''( x^d ) \ge \nu^2 > 0  \, , \quad \mbox{for $0 < x < \infty$}.
\end{equation}
This condition is fulfilled  if $g$ satisfies $g'' (x) \ge \nu^2 > 0$. The latter assumption is possible for $d \ge 3$,
but it is inconsistent with \eqref{GGROWTH} for $d =2$. An alternative is to impose at infinity the condition
\begin{equation}\tag{H5$^\prime$}\label{D2pHGROWTH}
    \limsup_{x \to \infty } h''(x)x^{2-\frac{2}{d}} > 0
\end{equation}
which together with \eqref{GHPROP1} implies \eqref{D2HGROWTH}.
Hypothesis  \eqref{D2HGROWTH} ensures that $\mathcal{T}(\varphi_0,v_0) \to \infty$ as $\varphi_0 \to 0_+$.

\begin{lemma}\label{SPEEDNULMM}
Let $\Phi$ satisfy \eqref{spseis}-\eqref{GGROWTH}, \eqref{D2HGROWTH}. Let $T(\varphi_0,v_0), \mathcal{T}(\varphi_0,v_0)$ denote the times of existence defined in Theorem \ref{CAVSOLEXISTSP} and Lemma \ref{resccavexist}, respectively. Then,
\begin{align}
 T(\varphi_0,v_0)  = \varphi_0\mathcal{T}(\varphi_0,v_0)  & > \nu\,,  \quad \,\,\, \varphi_0,v_0>0\,.\label{MAXTIMEUPOS}
\end{align}
\end{lemma}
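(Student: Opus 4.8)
The claim is that the maximal existence time $T(\varphi_0,v_0)$ for the cavitating solution of \eqref{ODEIVPSYS} is bounded below by $\nu$, the constant from hypothesis \eqref{D2HGROWTH}. The plan is to use the characterization of $T$ from Theorem \ref{CAVSOLEXISTSP}(iii), namely that $T$ is finite and $Q(a,b,s)\to 0$ as $s\to T_-$, where $Q(a,b,s)=s^2-\Phi_{11}(a,b)$; combined with $Q(s)<0$ on $(0,T)$ this gives $T^2=\lim_{s\to T_-}\Phi_{11}(a(s),b(s))$. So it suffices to show that $\Phi_{11}(a,b)$ stays bounded below by $\nu^2$ along the trajectory, and then pass to the limit.

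First I would exploit the monotonicity structure established in Theorem \ref{CAVSOLEXISTSP}(ii): on $(0,T)$ one has $0<\dot\varphi(s)<\varphi(s)/s$, so $a<b$, and moreover (see \eqref{abbound} and \eqref{ABDEF}) the trajectory satisfies $0<A\le a(s)<b(s)\le B<\infty$ with $a$ increasing and $b$ decreasing. In particular the limits $A,B$ from \eqref{ABDEF} are strictly positive. The key point is to bound $\Phi_{11}(a,b)$ from below using \eqref{D2HGROWTH} evaluated not on the diagonal but along the trajectory. Writing out $\Phi_{11}(a,b)=g''(a)+b^{2d-2}h''(ab^{d-1})$ from \eqref{QDEFSP}, I would argue: since $a<b$, softening (i.e. $h'''<0$ from \eqref{GHPROP2}) gives $h''(ab^{d-1})>h''(b^d)$, so
\begin{equation*}
\Phi_{11}(a,b)=g''(a)+b^{2d-2}h''(ab^{d-1})\ge g''(a)+b^{2d-2}h''(b^d).
\end{equation*}
Now I would like to compare $g''(a)+b^{2d-2}h''(b^d)$ with $\Phi_{11}(x,x)=g''(x)+x^{2d-2}h''(x^d)\ge\nu^2$ for an appropriate $x$. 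This does not quite work termwise since the two $g''$ arguments differ, but here one uses $g''\ge 0$ and $g'''\le 0$ (so $g''$ is nonincreasing): taking $x=b$, the term $b^{2d-2}h''(b^d)=\Phi_{11}(b,b)-g''(b)\ge\nu^2-g''(b)$, hence $\Phi_{11}(a,b)\ge g''(a)+\nu^2-g''(b)\ge\nu^2$ because $g''(a)\ge g''(b)$ (as $a<b$ and $g''$ nonincreasing). This gives $\Phi_{11}(a(s),b(s))\ge\nu^2$ for all $s\in(0,T)$.

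With that uniform lower bound in hand, the conclusion follows: letting $s\to T_-$ in $T^2=s^2>\Phi_{11}(a(s),b(s))$ wait — more carefully, from $Q(s)<0$ we have $s^2<\Phi_{11}(a(s),b(s))$ is false; rather $Q(s)=s^2-\Phi_{11}<0$ means $s^2<\Phi_{11}(a(s),b(s))$ is the wrong sign — let me restate: $Q<0$ gives $s^2<\Phi_{11}(a(s),b(s))$, so actually we need $\Phi_{11}$ bounded below to conclude $s$ is, but $Q(s)\to 0$ means $T^2=\lim_{s\to T_-}\Phi_{11}(a(s),b(s))\ge\nu^2$, whence $T\ge\nu$. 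The identity $T=\varphi_0\mathcal T(\varphi_0,v_0)$ is immediate from the definition $\mathcal T=T/\varphi_0$ in Lemma \ref{resccavexist}. The main obstacle is the slightly delicate comparison in the middle step — making sure one correctly uses both $g'''\le 0$ and $h'''<0$ to reduce the off-diagonal quantity $\Phi_{11}(a,b)$ to the on-diagonal one covered by \eqref{D2HGROWTH}; everything else is bookkeeping with the monotonicity already recorded in Theorem \ref{CAVSOLEXISTSP}.
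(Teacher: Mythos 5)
Your proof is correct and takes essentially the same route as the paper: you establish the uniform lower bound $\Phi_{11}(a,b)\ge\Phi_{11}(b,b)\ge\nu^2$ on the trajectory $\{0<a<b\}$ and then let $s\to T_-$ where $Q\to 0$ gives $T^2\ge\nu^2$. The paper obtains the first inequality in one step by citing $\Phi_{111}<0$ (already recorded in \eqref{DPHISIGN1}), whereas you re-derive it by splitting $\Phi_{11}(a,b)=g''(a)+b^{2d-2}h''(ab^{d-1})$ and using $g'''\le 0$, $h'''<0$ termwise — the same softening content, just unpacked.
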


\begin{proof}
By \eqref{GHPROP2}, \eqref{DPHISIGN1} and \eqref{D2HGROWTH}  we get
\begin{equation}
\label{PHI11UPOS}
    \Phi_{11}(a,b) > \Phi_{11}(b,b)   > \SP \nu^2 \SP >\SP 0
\end{equation}
for all $0< a \leq b$.
Let $(\varphi,v)(s \SP; \varphi_0,v_0)$ be the solution of \eqref{ODEIVPSYS} and $A, B$ the limits defined in \eqref{ABDEF}. Then from \eqref{QDEFSP}, \eqref{QABLIM}, \eqref{ABDEF} and \eqref{RESCSOL} we conclude
\begin{equation*}
\begin{aligned}
\varphi_0 \mathcal{T}(\varphi_0,v_0) =  T(\varphi_0,v_0) = \sqrt{\Phi_{11}(A,B)}
\SP > \SP \nu\,.
\end{aligned}
\end{equation*}
\end{proof}

In the sequel we employ the notation
\begin{align}
f(\xi \SP; \varphi_0, v_0) & =
\Phi_{11}({\psi'},\tfrac{\psi}{\xi})\Big(\frac{\xi}{\psi}\Big)^{2d-2}\notag
 \\&= h''(\delta)+g''(\delta(\tfrac{\xi}{\psi})^{d-1})
\Big(\frac{\xi}{\psi}\Big)^{2d-2} \label{FDEF}
\\
\widehat{Q}(\xi \SP; \varphi_0, v_0) &= \xi^2 \varphi_0^2 - \Phi_{11}(\psi',\tfrac{\psi}{\xi})\notag
\\
&
 = \xi^2 \varphi_0^2  - \Big[ g''(\delta\big(\tfrac{\xi}{\psi}\big)^{d-1}) + \Big(
\frac{\psi}{\xi}\Big)^{2d-2} h''(\delta)\Big] \label{QHDEF}
\\
D(\xi \SP; \varphi_0,v_0) & =  \Big(\frac{\xi}{\psi}\Big)^{2d-3}
  \delta \Big(\delta \Big(\frac{\xi}{\psi} \Big)^d -1 \Big)
\Big(\xi^2 \varphi_0^2 - g''(\delta\big(\tfrac{\xi}{\psi}\big)^{d-1})\Big) \notag \\
& \, \quad + \Big(\frac{\xi}{\psi}\Big)^{d-2}
 \Big( g'(\delta\big(\tfrac{\xi}{\psi}\big)^{d-1}) - g'(\tfrac{\psi}{\xi})\Big)\,, \label{DDEF}
\end{align}
where $(\psi,\delta)(\xi \SP ; \varphi_0, v_0)$ is the solution of
\eqref{ODEIVPSYSRESC}. We note that under Hypothesis \eqref{GGROWTH}
\begin{align}
|g'(x) | \le \SP \widehat{\gamma} \SP \max(1,x^{d-2}) \, , \quad \mbox{for $x\in[0,\infty)$} \, .
\label{GAMMAHDEF}
\end{align}

The next lemma provides bounds on the Cauchy stress.

\begin{lemma}\label{CSTRESSBNDS}
 Assume $d\geq 2$ and $\Phi$ satisfies \eqref{spseis}-\eqref{GGROWTH}, \eqref{D2HGROWTH}. Let $(\psi,\delta)(\xi \SP;\varphi_0,v_0)$ be the solution to \eqref{ODEIVPSYSRESC} defined
on a maximal interval of existence $[0,\mathcal{T}(\varphi_0,v_0))$.  Suppose $\tau>0$ is fixed, $\nu >0$ satisfies \eqref{PHI11UPOS} and
\begin{equation}\label{EPSDEF}
\eps_{\tau}:=\nu \SP \big(2(1+\nu+\nu^{-1}+\tau)\big)^{-1}\,.
\end{equation}
Then, for every
\begin{equation*}
0<\varphi_0 < \eps_{\tau},  \quad 0<v_0<\infty \, ,
\end{equation*}
the interval $[0,\tau] \subset [0,\mathcal{T}(\varphi_0,v_0))$ and
\begin{equation}\label{RESCRADC}
\begin{aligned}
    \widehat{T}_{rad}(\xi \SP; \varphi_0,v_0) &:= \Big[\Phi_{1}({\psi'},\tfrac{\psi}{\xi})\Big(\frac{\xi}{\psi} \Big)^{d-1}\Big] (\xi \SP; \varphi_0,v_0 )\\
    & \; = \Big[h'(\delta)+g'(\delta(\tfrac{\xi}{\psi})^{d-1})\Big( \frac{\xi}{\psi} \Big)^{d-1}\Big](\xi \SP; \varphi_0,v_0 )
\end{aligned}
\end{equation}
satisfies
\begin{equation}\label{RESCRADCBND}
   \Big|\frac{d}{d\xi} \big(\widehat{T}_{rad}(\xi \SP; \varphi_0,v_0) \big) \Big| <
   c_{rad}(1+\tau^{d+1})\,,\quad \xi \in (0,\tau]
\end{equation}
with $c_{rad} :=   6(d-1)(1+g''(0)+\widehat{\gamma})$.
\end{lemma}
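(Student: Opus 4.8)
The plan is to obtain the claimed $C^1$-bound on $\widehat{T}_{rad}$ by a continuation (bootstrap) argument: one shows that as long as the rescaled solution exists on $[0,\tau]$ the derivative bound \eqref{RESCRADCBND} holds, and conversely that this bound — together with the uniform ellipticity \eqref{PHI11UPOS} coming from \eqref{D2HGROWTH} — prevents the solution from leaving the region where the system \eqref{ODEIVPSYSRESC} is non-degenerate before time $\tau$, so that in fact $[0,\tau]\subset[0,\mathcal T(\varphi_0,v_0))$. First I would write down $\frac{d}{d\xi}\widehat{T}_{rad}$ explicitly. Since $\widehat{T}_{rad}=h'(\delta)+g'(\delta(\tfrac{\xi}{\psi})^{d-1})(\tfrac{\xi}{\psi})^{d-1}$, differentiating and using the ODEs $\psi'=\delta(\tfrac{\xi}{\psi})^{d-1}$ for $\psi$ and the second equation of \eqref{ODEIVPSYSRESC} for $\delta$ gives an expression in which the $h''(\delta)\delta'$ term and the terms proportional to $\delta'$ in the $g'$-derivative combine; the key algebraic fact (the same cancellation that produced the desingularized system \eqref{ODEIVPSYS}) is that the factor $f=h''(\delta)+g''(\delta(\tfrac{\xi}{\psi})^{d-1})(\tfrac{\xi}{\psi})^{2d-2}$ from \eqref{FDEF} appears and cancels against a matching factor in $\delta'$, leaving
$$
\frac{d}{d\xi}\widehat{T}_{rad}
= \frac{d-1}{\psi}\Big(\tfrac{\psi}{\xi}\Big)^{?}\big[\text{a ratio with denominator }\widehat Q\text{ replaced by }f\big] + \big(\text{lower-order }g'\text{ terms}\big),
$$
so that the dangerous denominator $\widehat Q$ (which vanishes at a sonic point) is replaced by $f\ge h''(\delta)>0$, which never vanishes. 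Carrying out this computation carefully is the crux: the point is that after cancellation the right-hand side has no small denominator.

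Next I would estimate each surviving term. By \eqref{PHI11UPOS} one has $\Phi_{11}\ge\nu^2$, and on $[0,\tau]$ the monotonicity from Lemma \ref{resccavexist}(i)--(ii) gives $\tfrac{\psi}{\xi}$ decreasing from its value $\psi(\xi)/\xi\to\infty$ as $\xi\to0$ down to a finite limit, $\psi'$ increasing, $0<\psi'<\tfrac{\psi}{\xi}$, and $\psi(\xi)\ge1$; combined with the growth bound \eqref{GAMMAHDEF} on $g'$ and the sign structure $g''>0$, $g'''\le0$, each factor $(\tfrac{\xi}{\psi})^k$, $g''(\cdot)$, $g'(\cdot)(\tfrac{\xi}{\psi})^{d-1}$ is bounded on $[0,\tau]$ by an explicit constant depending only on $d$, $g''(0)$, $\widehat\gamma$ and a power of $\tau$. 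Summing these and tracking the constants yields the bound $c_{rad}(1+\tau^{d+1})$ with $c_{rad}=6(d-1)(1+g''(0)+\widehat\gamma)$; the exponent $d+1$ and the numerical coefficient $6$ are just what the bookkeeping produces (one factor of $\tau$ from an explicit $\xi$ and up to $\tau^d$-type factors from the powers of $\xi/\psi$ when $\psi/\xi$ is near its lower limit). I would do this estimation on the \emph{a priori} time interval where the solution is known to exist and the sonic factor $\widehat Q$ is negative, i.e.\ on $(0,\min(\tau,\mathcal T))$.

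Finally I would close the argument showing $\mathcal T(\varphi_0,v_0)>\tau$ whenever $0<\varphi_0<\eps_\tau$. This is where the specific choice \eqref{EPSDEF} enters. From Lemma \ref{SPEEDNULMM} we have $T(\varphi_0,v_0)=\varphi_0\mathcal T(\varphi_0,v_0)=\sqrt{\Phi_{11}(A,B)}>\nu$; but this only gives $\mathcal T>\nu/\varphi_0$, which is large but we need quantitative control of $\psi,\delta,\psi'$ on $[0,\tau]$ to be sure the solution has not blown up or reached the sonic set. Here one uses that on $[0,\tau]$, $\widehat Q(\xi)=\xi^2\varphi_0^2-\Phi_{11}\le\tau^2\varphi_0^2-\nu^2$, which is strictly negative precisely because $\varphi_0<\eps_\tau\le\nu/\tau$ (the definition of $\eps_\tau$ makes $\tau\varphi_0<\nu$ with room to spare, the extra factors $1+\nu+\nu^{-1}$ absorbing the bounds on $\psi'$ and $\psi/\xi$); hence the system \eqref{ODEIVPSYSRESC} stays uniformly non-degenerate on $[0,\tau]$, and together with the monotonicity bounds (which confine $\psi',\psi/\xi$ to a bounded set once $\widehat Q<0$ is maintained, cf.\ \eqref{abbound}) this forbids finite-time breakdown before $\tau$, so $[0,\tau]\subset[0,\mathcal T)$. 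With existence on $[0,\tau]$ secured, the estimate of the previous paragraph gives \eqref{RESCRADCBND}. The main obstacle, as indicated, is the first step: performing the differentiation of $\widehat T_{rad}$ and verifying the cancellation that removes $\widehat Q$ from the denominator — once that is in hand, the remaining estimates are routine though tedious bookkeeping of the constants.
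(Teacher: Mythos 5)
Your overall plan is close to the paper's, but your description of the key cancellation step is wrong in a way that matters. You claim that after differentiating $\widehat T_{rad}$ and substituting the $\delta'$-equation, "the dangerous denominator $\widehat Q$ is replaced by $f$, which never vanishes," so that "the right-hand side has no small denominator." That is not what happens. Writing the $\delta'$-equation as $f\,\delta' = \frac{d-1}{\psi}D(\xi)\bigl(\frac{\xi^2\varphi_0^2}{\widehat Q}-1\bigr)$, the $-1$ part does produce a $\widehat Q$-free contribution, but the $\frac{\xi^2\varphi_0^2}{\widehat Q}$ part survives. After collecting terms one lands (as in the paper's \eqref{COMPTR2}) on a decomposition $\frac{1}{d-1}\frac{d}{d\xi}\widehat T_{rad} = I_1+I_2+I_3$ in which $I_2 = \bigl(\frac{\xi}{\psi}\bigr)\frac{D(\xi)}{\widehat Q(\xi)}\,\xi\varphi_0^2$ still contains $\widehat Q$ in the denominator. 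This term is not removed; it is controlled by the interplay of the prefactor $\varphi_0^2$ (small because $\varphi_0<\eps_\tau$) and the uniform lower bound $-\widehat Q \ge \nu^2/2$ on $(0,\tau]$ coming from \eqref{PHI11UPOS} and \eqref{EPSDEF}. This is exactly where \eqref{D2HGROWTH} and the specific form of $\eps_\tau$ enter the estimate — not merely in guaranteeing $[0,\tau]\subset[0,\mathcal T)$. If you carried out your plan as stated, you would find the residual $\widehat Q$ term and need this additional mechanism to control it.

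A lesser issue: the continuation/bootstrap you invoke to get $[0,\tau]\subset[0,\mathcal T(\varphi_0,v_0))$ is unnecessary. Lemma \ref{SPEEDNULMM} gives $\mathcal T(\varphi_0,v_0)>\nu/\varphi_0$ outright, with no smallness restriction, and $\varphi_0<\eps_\tau$ forces $\nu/\varphi_0 > 2(1+\nu+\nu^{-1}+\tau)>\tau$, so the inclusion is immediate. You say that $\mathcal T>\nu/\varphi_0$ "only" gives something that is "large" and that quantitative control of $(\psi,\delta,\psi')$ on $[0,\tau]$ is still needed to rule out blow-up or reaching the sonic set, but that reflects a misunderstanding: on $[0,\mathcal T)$ the solution exists by definition of the maximal interval, and $\mathcal T>\tau$ is literally the inclusion you want. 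The extra $(1+\nu+\nu^{-1})$ in $\eps_\tau$ is there to yield the margin $-\widehat Q\ge\nu^2/2$ used in estimating $I_2$, not to "absorb bounds on $\psi'$ and $\psi/\xi$."
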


\begin{proof}
Take $\varphi_0\in(0,\eps_{\tau}),\,v_0\in(0,\infty)$. By \eqref{MAXTIMEUPOS} the maximal time
$\mathcal{T}(\varphi_0,v_0)>\frac{\nu}{\varphi_0}$  thus  enforcing that $[0,\tau] \SP \subset \SP
[0,\mathcal{T}(\varphi_0,v_0))$ and $\widehat{T}_{rad}(\xi \SP; \varphi_0,v_0)$ is
well-defined on $(0,\tau]$.

Next, we compute
\begin{equation}\label{COMPTR1}
\begin{aligned}
\frac{d}{d\xi} \big( \widehat{T}_{rad}(\xi \SP; \varphi_0,v_0) \big) 
&= \frac{d}{d\xi}
\Big(g'(\delta\big(\tfrac{\xi}{\psi}\big)^{d-1})\Big(\frac{\xi}{\psi}\Big)^{d-1}
+h'(\delta)\Big)\\
& = \Big( g''(\delta\big(\tfrac{\xi}{\psi}\big)^{d-1})\Big(\frac{\xi}{\psi}\Big)^{2d-2}
 + h''(\delta) \Big) {\delta'}\\
 & \,\quad +\frac{d-1}{\psi} \Big(\frac{\xi}{\psi}\Big)^{2d-3}
 \delta \Big(1-\delta \Big(\frac{\xi}{\psi} \Big)^d \Big) g''(\delta\big(\tfrac{\xi}{\psi}\big)^{d-1})\\
  & \,\quad
  +\frac{d-1}{\psi} \Big(\frac{\xi}{\psi}\Big)^{d-2}
   \Big(1-\delta \Big(\frac{\xi}{\psi} \Big)^d\Big)
 g'(\delta\big(\tfrac{\xi}{\psi}\big)^{d-1})\,.
\end{aligned}
\end{equation}
Observe that by \eqref{ODEIVPSYSRESC}$_2$, \eqref{DDEF}, and \eqref{QHDEF}
\begin{equation*}
\begin{aligned}
\Big( g''(\delta\big(\tfrac{\xi}{\psi}\big)^{d-1})\Big(\frac{\xi}{\psi}\Big)^{2d-2}
 + h''(\delta) \Big) {\delta'} = \frac{d-1}{\psi} D(\xi)\bigg(\frac{\xi^2
 \varphi_0^2}{\widehat{Q}(\xi)}-1\bigg)
 \end{aligned}
\end{equation*}
and hence \eqref{COMPTR1} reads
\begin{equation}\label{COMPTR2}
\begin{aligned}
\frac{1}{d-1} \SP  \frac{d}{d\xi}  & \big( \widehat{T}_{rad}(\xi \SP; \varphi_0,v_0) \big) \\
&\,=\Big(\frac{\xi}{\psi}\Big)^{2d-2}
  \delta \Big(1 - \delta \Big(\frac{\xi}{\psi} \Big)^d \Big)\SP
\xi \varphi_0^2 + \Big(\frac{\xi}{\psi}\Big)\frac{D(\xi)}{\widehat{Q}(\xi)}\SP \xi
\varphi_0^2\\
 & \,\,\quad +  \frac{1}{\psi}\Big(\frac{\xi}{\psi}\Big)^{d-2}
 \Big( g'(\tfrac{\psi}{\xi}) -
 \delta\Big(\frac{\xi}{\psi}\Big)^{d}g'(\delta\big(\tfrac{\xi}{\psi}\big)^{d-1})\Big) \\
 & \,\SP =: \SP I_1 + I_2 + I_3\,.
\end{aligned}
\end{equation}

We now estimate the right-hand side of \eqref{COMPTR2}. By \eqref{MONTR1}-\eqref{MONTR2}
\begin{equation}\label{PSIPROP}
    1 = \psi(0) < \psi(\xi), \,\quad 0 < \Big( \frac{{\psi'}\xi}{\psi} \Big) = \delta
    \Big( \frac{\xi}{\psi}\Big)^d < 1\,, \quad \xi\in(0,\mathcal{T})
\end{equation}
and hence
\begin{equation}\label{I1EST}
    0 \, \leq\, I_1 =
 \SP \Big(\frac{\xi}{\psi}\Big)^{2d-2} \delta \!
    \Big(1 - \delta \Big(\frac{\xi}{\psi}\Big)^d\Big)
    \xi \varphi_0^2   < \xi^{d-1} \varphi_0^2  \SP
\,\leq \, \tau^{d-1} \varphi_0^2\,,\quad \xi\in(0,\tau]\,.
\end{equation}

Next, by \eqref{GGROWTH}, \eqref{PSIPROP}$_2$, and \eqref{GAMMAHDEF} obtain
\begin{equation}\label{DGPRODBND}
\begin{aligned}
|I_3| &\le \Big|\Big(\frac{\xi}{\psi}\Big)^{d-2} g'(\tfrac{\psi}{\xi}) \Big| +
\Big|\Big(\frac{\xi}{\psi}\Big)^{d-2}\! g'(\delta (\tfrac{\xi}{\psi})^{d-1})\Big|
\\
&\quad = \Big|\Big(\frac{\xi}{\psi}\Big)^{d-2} g'(\tfrac{\psi}{\xi}) \Big| +
\Big|\Big(\frac{{\psi'}\xi}{\psi}\Big)^{d-2} \frac{g'({\psi'})
}{({\psi'})^{d-2}}\Big| \, \leq \, 2 \widehat{\gamma} \SP (1+\tau^{d-2})\,, \quad \xi\in(0,\tau]\,.
\end{aligned}
\end{equation}

{Now, using \eqref{PSIPROP}, we obtain the estimate
\begin{equation*}
\begin{aligned}
 \SP \Big|\Big(\frac{\xi}{\psi}\Big)^{2d-3} \delta \!
    \Big(\delta \Big(\frac{\xi}{\psi}\Big)^d-1\Big) \Big| & =
    \Big(    \frac{\psi}{\xi}\Big)  \Big|\Big(\frac{\xi}{\psi}\Big)^{d-2}\delta \Big(\frac{\xi}{\psi}\Big)^{d} \!  \Big(\delta \Big(\frac{\xi}{\psi}\Big)^d-1\Big) \Big|  \leq \Big(    \frac{\psi}{\xi}\Big) \tau^{d-2}
\end{aligned}
\end{equation*}
which together with \eqref{GHPROP1}, \eqref{GHPROP2}, \eqref{DDEF}, \eqref{PSIPROP}$_2$, and \eqref{DGPRODBND} gives the estimate
\begin{equation}\label{DEST}
\begin{aligned}
\big|D(\xi \SP; \varphi_0,v_0)\big| & \, \leq  \, \Big| \Big(\frac{\xi}{\psi}\Big)^{2d-3}
  \delta \Big(\delta \Big(\frac{\xi}{\psi} \Big)^d -1 \Big)\Big|\big(\xi^2 \varphi_0^2 + g''(0)\big) \\
& \,\quad + \Big|\Big(\frac{\xi}{\psi}\Big)^{d-2} g'(\delta\big(\tfrac{\xi}{\psi}\big)^{d-1})\Big| + \Big|\Big(\frac{\xi}{\psi}\Big)^{d-2}g'(\tfrac{\psi}{\xi})\Big|\\
&\leq \Big( \frac{\psi}{\xi} \Big) (1+\tau^{d}) (\varphi_0^2 + g''(0)) + 2
\widehat{\gamma}\SP(1+\tau^{d-2})\,, \;\; \xi\in(0,\tau]\,.
\end{aligned}
\end{equation} }

%

Next, since $\varphi_0 \in (0,\eps_{\tau}) $, by \eqref{PHI11UPOS} we have
\begin{equation}\label{QLBND}
 -\widehat{Q}(\xi \SP; \varphi_0,v_0) =
    \Phi_{11}({\psi'},\tfrac{\psi}{\xi})-\varphi_0^2 \xi^2 \SP
    > \SP \frac{\nu^2}{2}\,, \quad \xi \in (0,\tau]
\end{equation}
{and therefore, employing \eqref{EPSDEF}, \eqref{DEST} and \eqref{PSIPROP}, we obtain
\begin{equation}\label{I2EST}
\begin{aligned}
|I_2|  = \bigg| \Big(\frac{\xi}{\psi}\Big)\frac{D(\xi)}{\widehat{Q}(\xi)}\SP \xi\varphi_0^2 \bigg| & \leq \frac{2}{\nu^2} \bigg( (1+\tau^{d}) (\varphi_0^2 + g''(0)) + 2
\widehat{\gamma}\SP(1+\tau^{d-2})\Big( \frac{\xi}{\psi} \Big) \bigg)\xi \varphi_0^2\\
& \leq \frac{2 \varphi_0}{\nu^2} \Big( (\tau+\tau^{d+1}) (\varphi_0^2 + g''(0)) + 2
\widehat{\gamma}\SP(\tau^2+\tau^{d})\Big) \varphi_0\\
& \leq 4(1+\tau^{d+1}) (\varphi_0^2 + g''(0)+
\widehat{\gamma})\varphi_0\, \,, \quad \xi \in (0,\tau]\,,
\end{aligned}
\end{equation}
where we used the fact that $\frac{2\varphi_0}{\nu^2}<1$ because $0<\varphi_0<\eps_{\tau}$\,.  Thus by \eqref{DGPRODBND} and \eqref{I2EST} }we conclude
\begin{equation}\label{I23EST}
\begin{aligned}
|I_2|+|I_3| \leq  4  (1+\tau^{d+1}) (\varphi_0^2+g''(0)+ \widehat{\gamma})
 \varphi_0+2 \widehat{\gamma}(1+\tau^{d-2})\,, \,\, \xi \in (0,\tau].
\end{aligned}
\end{equation}
Combining \eqref{COMPTR2}, \eqref{I1EST} and \eqref{I23EST} we obtain
\eqref{RESCRADCBND}.
\end{proof}

\begin{lemma}
\label{BNDCRL}
 Assume $d\geq 2$ and $\Phi$ satisfies \eqref{spseis}-\eqref{D2HGROWTH}.
Let $(\psi,\delta)(\xi \SP; \varphi_0,v_0)$ be the
solution to \eqref{ODEIVPSYSRESC} defined on a maximal interval of existence $[0,\mathcal{T}(\varphi_0,v_0))$. Let $\tau>0$,
$v_M>v_m>0$ be fixed.  Then, for every
\begin{equation*}
0<  \varphi_0<\eps_{\tau},\quad v_m \leq v_0 \leq v_M
\end{equation*}
the interval $[0,\tau] \subset [0,\mathcal{T}(\varphi_0,v_0))$ and \begin{equation*}
\vspace{2pt}
\begin{aligned}
0 < c_1 &< \delta(\xi \SP; \varphi_0,v_0) <c_2, \quad  1 \leq \psi(\xi \SP; \varphi_0,v_0)
<c_3,\quad \forall\xi\in[0,\tau]
\end{aligned}
\end{equation*}
for some $c_1=c_1(\tau,v_m),\,c_2=c_2(\tau,v_M), \, c_3=c_3(\tau,v_M)$ independent of \SP$\varphi_0,v_0$.
\end{lemma}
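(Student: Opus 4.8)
The plan is to first record that $[0,\tau]$ lies in the maximal interval of existence, then to bound the specific volume $\delta$ by controlling the rescaled radial Cauchy stress, and finally to bound $\psi$ by integrating $\psi'$. Since $0<\varphi_0<\eps_\tau$, Lemma~\ref{SPEEDNULMM} gives $\mathcal{T}(\varphi_0,v_0)>\nu/\varphi_0>\nu/\eps_\tau=2(1+\nu+\nu^{-1}+\tau)>\tau$, so $(\psi,\delta)(\cdot\,;\varphi_0,v_0)$ is defined on $[0,\tau]$ and Lemma~\ref{CSTRESSBNDS} applies there. From $\psi(0)=1$ and $\psi'>0$ (Lemma~\ref{resccavexist}) we already have $\psi(\xi)\ge1$ on $[0,\tau]$, and from the monotonicity relations of Lemma~\ref{resccavexist} (equivalently \eqref{PSIPROP}) we have $0<\psi'(\xi)\cdot\tfrac{\xi}{\psi(\xi)}<1$, in particular $0<\psi'(\xi)<\tfrac{\psi(\xi)}{\xi}$, on $(0,\tau]$.

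\emph{Bounding $\delta$.} Starting from the identity $\widehat{T}_{rad}(\xi;\varphi_0,v_0)=h'(\delta)+g'(\psi')\bigl(\tfrac{\xi}{\psi}\bigr)^{d-1}$ of \eqref{RESCRADC} (recall $\delta\bigl(\tfrac{\xi}{\psi}\bigr)^{d-1}=\psi'$), I would use three facts. (a) $\widehat{T}_{rad}(\cdot\,;\varphi_0,v_0)$ extends continuously to $\xi=0$ with value $h'(v_0)$; indeed $\widehat{T}_{rad}(\xi;\varphi_0,v_0)=T_{rad}(\varphi_0\xi)$ by the rescaling, so this follows from \eqref{TATZERO} (or directly: as $\xi\to0_+$, $\psi\to1$, $\delta\to v_0$ and $\bigl(\tfrac{\xi}{\psi}\bigr)^{d-1}\to0$ since $d\ge2$). (b) By Lemma~\ref{CSTRESSBNDS}, $\bigl|\tfrac{d}{d\xi}\widehat{T}_{rad}\bigr|<c_{rad}(1+\tau^{d+1})$ on $(0,\tau]$, so integrating from $0$ gives $|\widehat{T}_{rad}(\xi;\varphi_0,v_0)-h'(v_0)|\le M_\tau:=c_{rad}(1+\tau^{d+1})\tau$ for all $\xi\in[0,\tau]$. (c) Using \eqref{GAMMAHDEF} together with $\psi\ge1$ and $0<\psi'\cdot\tfrac{\xi}{\psi}<1$ (so that $\bigl(\tfrac{\xi}{\psi}\bigr)^{d-1}\le\tau^{d-1}$ and $(\psi')^{d-2}\bigl(\tfrac{\xi}{\psi}\bigr)^{d-1}=\bigl[\psi'\tfrac{\xi}{\psi}\bigr]^{d-2}\tfrac{\xi}{\psi}\le\tau$), one obtains the $\varphi_0,v_0$-independent bound $\bigl|g'(\psi')\bigl(\tfrac{\xi}{\psi}\bigr)^{d-1}\bigr|\le\widehat{\gamma}(\tau+\tau^{d-1})=:K_\tau$ on $(0,\tau]$. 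Combining (a)--(c) with $v_m\le v_0\le v_M$ and the monotonicity of $h'$, one sees that $h'(\delta(\xi))$ lies in the fixed interval $[\,h'(v_m)-M_\tau-K_\tau,\ h'(v_M)+M_\tau+K_\tau\,]$ on $[0,\tau]$. Since \eqref{GHPROP1} and \eqref{DHATINF} make $h'$ a strictly increasing bijection of $(0,\infty)$ onto $\RR$, inverting gives $0<c_1\le\delta(\xi)\le c_2<\infty$ with $c_1:=(h')^{-1}\!\bigl(h'(v_m)-M_\tau-K_\tau\bigr)$ depending only on $\tau,v_m$ and $c_2:=(h')^{-1}\!\bigl(h'(v_M)+M_\tau+K_\tau\bigr)$ only on $\tau,v_M$; shrinking $c_1$ and enlarging $c_2$ slightly yields the strict inequalities.

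\emph{Bounding $\psi$.} With $\delta\le c_2$ available, $\psi'(\xi)=\delta(\xi)\bigl(\tfrac{\xi}{\psi}\bigr)^{d-1}\le c_2\,\xi^{d-1}\le c_2\,\tau^{d-1}$ on $(0,\tau]$ (using $\psi\ge1$ and $\xi\le\tau$), whence $\psi(\xi)=1+\int_0^\xi\psi'(\eta)\,d\eta<1+c_2\,\tau^{d}=:c_3$, depending only on $\tau,v_M$. Together with $\psi\ge1$ this is exactly the assertion.

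\emph{Main obstacle.} There is no real conceptual difficulty here: the hard, $\varphi_0$-uniform part of the estimate is already packaged in the Cauchy-stress derivative bound of Lemma~\ref{CSTRESSBNDS}. The points demanding care are that the order of estimation is forced — $\delta$ must be controlled first through $\widehat{T}_{rad}$, and only afterwards can $\psi$ be bounded by integrating $\psi'$ — and that the term $g'(\psi')\bigl(\tfrac{\xi}{\psi}\bigr)^{d-1}$ has to be estimated uniformly in both $d=2$ and $d\ge3$, which is precisely what the split growth bound \eqref{GAMMAHDEF} delivers.
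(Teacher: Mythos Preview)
Your proof is correct and follows essentially the same route as the paper: use the derivative bound on $\widehat{T}_{rad}$ from Lemma~\ref{CSTRESSBNDS} together with $\widehat{T}_{rad}(0)=h'(v_0)$ to trap $h'(\delta)$ in a $\varphi_0$-independent interval, invert $h'$ via \eqref{GHPROP1}--\eqref{DHATINF} to bound $\delta$, and then integrate $\psi'=\delta(\xi/\psi)^{d-1}$ to bound $\psi$. The only cosmetic differences are in the explicit constants for $K_\tau$ and that you invoke Lemma~\ref{SPEEDNULMM} directly for the inclusion $[0,\tau]\subset[0,\mathcal{T})$, whereas the paper cites Lemma~\ref{CSTRESSBNDS} (which already contains that conclusion).
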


\begin{proof}
Fix $\tau>0$, $v_M>v_m>0$. Take any $\varphi_0\in(0,\eps_{\tau})$, $v_0 \in [v_m,v_M]$ and consider the solution
$(\psi,\delta)(\xi \SP; \varphi_0,v_0)$ to \eqref{ODEIVPSYSRESC}. From \eqref{RESCRADC}
it follows that
\begin{equation*}
    \widehat{T}_{rad}(0; \varphi_0, v_0) := \lim_{\xi\to 0_+} \widehat{T}_{rad}(\xi \SP; \varphi_0, v_0) = h'(v_0)
\end{equation*}
and hence, using \eqref{RESCRADCBND}, we obtain for $\xi\in[0,\tau]$
\begin{equation*}
\begin{aligned}
    \big|\widehat{T}_{rad}&(\xi \SP;\varphi_0, v_0)-\widehat{T}_{rad}(0 \SP;\varphi_0, v_0)\big|\\
     &= \, \big|h'(\delta)+g'(\delta(\tfrac{\xi}{\psi})^{d-1})\Big( \frac{\xi}{\psi} \Big)^{d-1} - h'(v_0)\big|
  <  c_{rad}(1+\tau^{d+1})\tau\,.
\end{aligned}
\end{equation*}
{Then, similarly to \eqref{DGPRODBND}, using \eqref{GAMMAHDEF} and \eqref{PSIPROP}, we get
\begin{equation*}
\begin{aligned}
\Big|\Big(\frac{\xi}{\psi}\Big)^{d-1} g'(\delta(\tfrac{\xi}{\psi})^{d-1})\Big| \, \leq \, \widehat{\gamma} \SP (1+\tau^{d-2})\tau\,, \quad \xi\in(0,\tau]
\end{aligned}
\end{equation*}
and therefore
}
\begin{equation*}
    |h'(\delta(\xi \SP; \varphi_0,v_0))-h'(v_0)| <
    2(c_{rad}+\widehat{\gamma})(1+\tau^{d+2}),\quad \xi \in [0,\tau].
\end{equation*}
By \eqref{GHPROP1}, \eqref{DHATINF},  the inverse $h'^{-1}(z)$ is
well-defined for all $z\in(-\infty,\infty)$ and strictly positive. Hence
\begin{equation*}
 0 < c_1 < \delta(\xi \SP; \varphi_0, v_0) < c_2, \quad \xi \in [0,\tau]
\end{equation*}
with
\begin{equation*}
\begin{aligned}
    c_1(\tau, v_m) &:= h'^{-1}(h'(v_m)-2(c_{rad}+\widehat{\gamma})(1+ \tau^{d+2}))\\
     c_2(\tau, v_M)&:= h'^{-1}(h'(v_M)+2(c_{rad}+\widehat{\gamma})(1+ \tau^{d+2}))\,.
\end{aligned}
\end{equation*}

Next, by \eqref{ODEIVPSYSRESC}$_1$, \eqref{PSIPROP}$_1$ and the bound for $\delta$ we obtain
\begin{equation*}
    0< {\psi'}(\xi \SP; \varphi_0, v_0) = \delta \Big( \frac{\xi}{\psi}\Big)^{d-1} < \delta \tau ^ {d-1} <
    c_2
    \tau^{d-1}, \quad \xi \in (0,\tau]
\end{equation*}
and hence, since $\psi(0)=1$, we conclude
\begin{equation*}
 1 \leq \psi(\xi \SP; \varphi_0, v_0) < c_3(\tau,v_M)     :=1+ c_2 \tau^d \,, \quad \xi\in[0,\tau]\,.
\end{equation*}
\end{proof}

\subsection{The limiting system: $\varphi_0=0$}
In this section we consider the limiting problem $\varphi_0=0$ in \eqref{ODEIVPSYSRESC} and discuss its
connection to the problem of radial solutions for equilibrium elasticity analyzed by \cite{Ball82}.

\subsubsection{Inner solution}

Setting $\varphi_0=0$ into \eqref{ODEIVPSYSRESC} leads to
the initial value problem
\begin{equation}\label{ODEIVPSYSLIM}
\left\{
\begin{aligned}
{\psi_0'} &= \delta_0\Big(\frac{\xi}{\psi_0}\Big)^{d-1}\\
{\delta_0'} & = \bigg(\frac{d-1}{\psi_0}\bigg) \frac{ -\big(
\frac{\xi}{\psi_0}\big)^{2d-3}\delta_0 \Big( \delta_0 \big(\frac{\xi}{\psi_0}\big)^{d} -
1 \Big) g''( \delta_0 (\tfrac{\xi}{\psi_0})^{d-1})
\Big]}{\Bigl[-h''(\delta_0)-g''(\delta_0(\tfrac{\xi}{\psi_0})^{d-1})\big(\frac{\xi}{\psi_0}\big)^{2d-2}\Bigr]
}\\[2pt]
&\qquad +\bigg(\frac{d-1}{\psi_0}\bigg)\frac{ \big( \frac{\xi}{\psi_0}\big)^{d-2}
\Big[g'( \delta_0 (\tfrac{\xi}{\psi_0})^{d-1})-g'(\frac{\psi_0}{\xi})
\Big]}{\Bigl[-h''(\delta_0)-g''(\delta_0(\tfrac{\xi}{\psi_0})^{d-1})
\big(\frac{\xi}{\psi_0}\big)^{2d-2}\Bigr]\SP}\\
&\begin{aligned}
\psi_0(0 \SP; v_0)&=1\\
\delta_0(0\SP; v_0)&=v_0
\end{aligned}
\end{aligned}\right.
\end{equation}
while setting $\varphi_0=0$ into \eqref{RESCODE} gives the second order differential equation
\begin{equation}\label{LIMODE}
\begin{aligned}
-\Phi_{11}({\psi_0'},\tfrac{\psi_0}{\xi})\, {\psi_0''} =
\frac{d-1}{\xi}\Big({\psi_0'} -
\frac{\psi_0}{\xi}\Big)P({\psi_0'},\tfrac{\psi_0}{\xi})\,.
\end{aligned}
\end{equation}

First, we show that  \eqref{ODEIVPSYSLIM} has a globally-defined solution.
This is an {\it inner solution} that describes the behavior of a material with a cavity whose surface moves with infinitely small speed.
\begin{proposition}\label{LIMSOLLMM}
Let $d\geq 2$, let $\Phi$ satisfy \eqref{spseis}-\eqref{GGROWTH}, and  $v_0>0$ be fixed.
Then, there exists a unique global solution
\begin{equation*}
\psi_0(\xi \SP; v_0) \in C^2[0,\infty),\quad \delta_0(\xi \SP; v_0) \in C^1[0,\infty)
\end{equation*}
to the initial value problem \eqref{ODEIVPSYSLIM}
with $\psi_0$ solving the equation \eqref{LIMODE} on $(0,\infty)$.
The solution has the following properties:
\begin{itemize}
\item [$(i)$] \; $\psi_0'$, $\tfrac{\psi_0}{\xi}$, $(\psi_0'
-\tfrac{\psi_0}{\xi})$ are strictly monotonic and satisfy
\begin{align}
\label{MONTL1}
&{\psi_0''}(\xi)>0, \quad \frac{d}{d\xi}\Big(\frac{\psi_0}{\xi}\Big) < 0, \quad
\frac{d}{d\xi}\Big({\psi_0'} - \tfrac{\psi_0}{\xi}\Big) > 0, \quad
\xi\in(0,\infty)\,.
\\
\label{MONTL2}
&0 <  {\psi_0'}(\xi)  < {\psi_0'}(\tau)<  \frac{\psi_0(\tau)}{\tau} < \frac{\psi_0(\xi)}{\xi}\, ,
\quad \mbox{$0<\xi<\tau<\infty$}.
\end{align}

\item[$(ii)$] \; The following limits exist and satisfy
\begin{align}
&0 \, < \, \lim_{\xi\to \infty} {\psi_0'} = \lim_{\xi\to \infty} \Big(
\frac{\psi_0}{\xi}
\Big)=:\Lambda_0(v_0) \, < \, \infty \quad \mbox{for some} \quad \Lambda_0=\Lambda_0(v_0)>0\,. \label{ABLIML}
\end{align}

\item[$(iii)$] $\psi_0,\delta_0$ obey the bounds
\begin{align}
\label{LIMSOLBND1}
\max(1,\Lambda_0\xi) < \psi_0(\xi) & < 1+\Lambda_0 \xi, \quad 0 <
{\psi_0'}(\xi) < \Lambda_0
\\
\label{LIMSOLBND2}
0&< \mu_1 < \delta_0(\xi) <\mu_2\,,
\end{align}
for $\xi\in[0,\infty)$  and some $\mu_1 = \mu_1(v_0)$, $\mu_2=\mu_2(v_0)$. In addition,
\begin{equation}\label{DDELTAINTEGR}
{\delta_0'}(\xi) \in L^1(0,\infty)\,.
\end{equation}
\end{itemize}

\end{proposition}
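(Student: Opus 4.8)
The plan is to mirror the proof of Theorem~\ref{CAVSOLEXISTSP} (and Lemma~\ref{resccavexist}), the decisive simplification being that in \eqref{LIMODE} the coefficient of $\psi_0''$ is $-\Phi_{11}(\psi_0',\tfrac{\psi_0}{\xi})$, which by \eqref{GHPROP1} is strictly negative on all of $\{0<a\le b\}$; hence, unlike the case $\varphi_0>0$ where $Q=s^2-\Phi_{11}$ vanishes at the finite endpoint $T$, the limiting problem has no interior singularity and its solution persists for all $\xi$. First I would note that, exactly as in Lemma~\ref{IVPEXISTLMM}, the only potentially singular term on the right of \eqref{ODEIVPSYSLIM} is $(\tfrac{\xi}{\psi_0})^{d-2}g'(\tfrac{\psi_0}{\xi})$, continuous up to $\xi=0$ by \eqref{GGROWTH}, while the remaining terms are $C^1$ near $(1,v_0)$ since $g''(0)>0$, $h''(v_0)>0$; standard ODE theory then gives a unique $C^1$ solution $(\psi_0,\delta_0)$ on a maximal interval $[0,\mathcal{T}_0)$, with $\psi_0\in C^2$ via \eqref{LIMODE}. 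Since $\tfrac{\psi_0}{\xi}\to\infty$ and $\psi_0'=\delta_0(\tfrac{\xi}{\psi_0})^{d-1}\to0$ as $\xi\to0_+$, one has $0<\psi_0'<\tfrac{\psi_0}{\xi}$ initially; on that regime \eqref{DPHISIGN1} holds ($\Phi_{11}>0$, $\Phi_{111}<0$, $P>0$), so \eqref{LIMODE} gives $\psi_0''>0$, the identity $\tfrac{d}{d\xi}(\tfrac{\psi_0}{\xi})=\tfrac{w}{\xi}$ with $w:=\psi_0'-\tfrac{\psi_0}{\xi}<0$ gives $(\tfrac{\psi_0}{\xi})'<0$, and $w$ solves the \emph{linear} equation $w'=-\tfrac{w}{\xi}\big(\tfrac{(d-1)P}{\Phi_{11}}+1\big)$, so $w$ cannot change sign and all these monotonicities persist on $(0,\mathcal{T}_0)$; this is \eqref{MONTL1}--\eqref{MONTL2}.

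For global existence I would argue by contradiction: if $\mathcal{T}_0<\infty$, then for fixed $\xi_0\in(0,\mathcal{T}_0)$ and $\xi\in[\xi_0,\mathcal{T}_0)$ monotonicity forces $0<\psi_0'(\xi)<\tfrac{\psi_0(\xi)}{\xi}<\tfrac{\psi_0(\xi_0)}{\xi_0}$, hence $\psi_0$ grows at most linearly and $0<\tfrac{\xi}{\psi_0}\le\mathcal{T}_0$; since $\psi_0'$ is increasing, $\delta_0=\psi_0'(\tfrac{\psi_0}{\xi})^{d-1}$ stays in a compact subset of $(0,\infty)$ on $[\xi_0,\mathcal{T}_0)$ and is continuous and positive on $[0,\xi_0]$, so $h''(\delta_0)$ is bounded below and the whole right-hand side of \eqref{ODEIVPSYSLIM} is bounded on $[0,\mathcal{T}_0)$; standard continuation then contradicts maximality, so $\mathcal{T}_0=\infty$, $\psi_0\in C^2[0,\infty)$, $\delta_0\in C^1[0,\infty)$.

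For (ii), $\psi_0'$ is increasing and bounded above (by $\tfrac{\psi_0(\xi_0)}{\xi_0}$ for each $\xi_0>0$), so $\Lambda_0:=\lim_{\xi\to\infty}\psi_0'$ exists with $0<\Lambda_0<\infty$; writing $\psi_0(\xi)=\psi_0(\xi_0)+\int_{\xi_0}^\xi\psi_0'$ and applying Cesàro/Stolz gives $\tfrac{\psi_0}{\xi}\to\Lambda_0$, which is \eqref{ABLIML}. Then \eqref{LIMSOLBND1} follows at once ($0<\psi_0'<\Lambda_0$ and $\psi_0(\xi)=1+\int_0^\xi\psi_0'<1+\Lambda_0\xi$ since $\psi_0'\nearrow\Lambda_0$, while $\psi_0>\max(1,\Lambda_0\xi)$ since $\psi_0$ is increasing and $\tfrac{\psi_0}{\xi}\searrow\Lambda_0$). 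For \eqref{LIMSOLBND2}, $\delta_0=\psi_0'(\tfrac{\psi_0}{\xi})^{d-1}$ is continuous and strictly positive on $[0,\infty)$ with $\delta_0(0)=v_0$ and $\delta_0(\xi)\to\Lambda_0^{d}\in(0,\infty)$, hence bounded and bounded away from $0$, giving $\mu_1(v_0),\mu_2(v_0)$. Finally $\delta_0'=\psi_0''(\tfrac{\psi_0}{\xi})^{d-1}+(d-1)\psi_0'(\tfrac{\psi_0}{\xi})^{d-2}\tfrac{w}{\xi}$, and on $[1,\infty)$ the factors $(\tfrac{\psi_0}{\xi})^{d-1}$, $\psi_0'$, $(\tfrac{\psi_0}{\xi})^{d-2}$ are bounded, so $|\delta_0'|\le C(\psi_0''+\tfrac{|w|}{\xi})$ there; moreover $\int_1^\infty\psi_0''=\Lambda_0-\psi_0'(1)<\infty$, while $\tfrac{(d-1)P}{\Phi_{11}}+1\ge1$ turns the linear equation for $w$ into $\tfrac{|w|}{\xi}\le w'$, so $\int_1^\infty\tfrac{|w|}{\xi}\le -w(1)<\infty$; combined with continuity of $\delta_0'$ on $[0,1]$ this proves \eqref{DDELTAINTEGR}.

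I expect the main obstacle to be the global-existence step: one must verify that none of $\psi_0$, $\psi_0'$, $\delta_0$, nor the denominator of \eqref{ODEIVPSYSLIM}, degenerates in finite time — the monotonicity just established is exactly what confines them, and the fact that $\Phi_{11}$ never vanishes (in contrast to $Q$ in Theorem~\ref{CAVSOLEXISTSP}) is what permits continuation all the way to $\xi=\infty$. The $L^1$ bound \eqref{DDELTAINTEGR} is the other mildly delicate point, and it is handled through the linear differential inequality for $w=\psi_0'-\tfrac{\psi_0}{\xi}$.
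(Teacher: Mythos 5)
Your proof is correct and mirrors the paper's overall strategy: reduce to the first-order system in $(a_0,b_0)=(\psi_0',\psi_0/\xi)$, use the sign conditions \eqref{DPHISIGN1} and the linear first-order equation satisfied by $w=a_0-b_0$ to obtain the monotonicities \eqref{MONTL1}--\eqref{MONTL2} and a trapping region, and then continuation for global existence. You depart in two small places. For (ii), the paper integrates the linear ODE for $b_0-a_0$ explicitly to get the pointwise decay $(b_0-a_0)(\xi)\le (b_0-a_0)(1)/\xi$ (recorded as \eqref{derivbou}) and deduces $A=B$ from it; you instead observe that $\psi_0'$ is monotone bounded, hence convergent, and apply Ces\`aro/Stolz to identify $\lim \psi_0/\xi$ with the same limit — a slightly more elementary route that does not need the explicit solution formula. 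For \eqref{DDELTAINTEGR}, the paper uses the pointwise estimates $|\tfrac{d}{d\xi}(\psi_0/\xi)|\le C/\xi^2$ and $\psi_0''\le C/\xi^2$ (both consequences of \eqref{derivbou}), while you instead note $\int_1^\infty\psi_0''=\Lambda_0-\psi_0'(1)$ by the fundamental theorem of calculus and bound $\int_1^\infty |w|/\xi$ by $-w(1)$ via the differential inequality $w'\ge |w|/\xi$ — again avoiding pointwise decay. Both variants are valid; the paper's version has the advantage of actually producing the quantitative $O(1/\xi)$ decay \eqref{derivbou}, which is reused elsewhere in the analysis, whereas yours is lighter but yields only the integrability statement.
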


\begin{proof}
The first part of the proof proceeds along the lines of the proof of Theorem \ref{CAVSOLEXISTSP}.
Fix $v_0>0$. Using \eqref{GGROWTH},   there exists a unique local solution   $(\psi_0,\delta_0)$ of
\eqref{ODEIVPSYSLIM}. Clearly, $\psi_0$ satisfies \eqref{LIMODE}, and if we set
$$
a_0 = \psi'_0 \, , \quad b_0 = \frac{\psi_0}{\xi} \, ,
$$
$(a_0, b_0)$ also satisfies the system of ordinary differential equations
\begin{equation}
\label{sysab}
\begin{aligned}
a_0' &=  - \frac{d-1}{\xi} (a_0 - b_0 ) \frac{P(a_0, b_0)}{\Phi_{11}(a_0,b_0)}
\\
b_0' &= \frac{1}{\xi} (a_0 - b_0)
\end{aligned}
\end{equation}
By construction, the solution $(a_0, b_0)$ starts above the diagonal line of equilibria $a_0 = b_0$ for \eqref{sysab}.
Using \eqref{DPHISIGN1} we see that the monotonicity properties \eqref{MONTL1} and \eqref{MONTL2} hold, that
the solution $(a_0, b_0)$ is globally defined, and that
the following  limits exist
\begin{equation*}
\lim_{\xi \to \infty} {\psi_0'} = A \, \leq \, \lim_{\xi \to \infty}
\Big(\frac{\psi_0}{\xi}\Big) = B,\quad \lim_{\xi\to \infty} \delta_0 = AB^{d-1}
\end{equation*}
for some finite $A,B>0$.

From \eqref{sysab} we deduce that
$$
\frac{d}{d\xi} (b_0 - a_0 ) + \frac{1}{\xi} \Big ( 1 +  (d-1) \frac{P(a_0, b_0)}{\Phi_{11}(a_0,b_0)} \Big ) (b_0 - a_0) = 0
$$
and using again \eqref{DPHISIGN1}
\begin{equation}
\label{derivbou}
\begin{aligned}
0 < (b_0 - a_0 )(\xi)  &=  (b_0 - a_0 ) (1) \exp \Big  \{  - \int_1^\xi \frac{1}{\zeta} \Big ( 1 +  (d-1) \frac{P(a_0, b_0)}{\Phi_{11}(a_0,b_0)} \Big ) d\zeta \Big \}
\\
&\le \frac{ (b_0 - a_0 ) (1)}{\xi}
\end{aligned}
\end{equation}
Hence, $A=B =: \Lambda_0$, and the monotonicity properties \eqref{MONTL1} yield the bounds
\begin{equation*}
1 < \psi_0\,, \quad
0 < \psi'_0  < \Lambda_0\,,\quad
\Lambda_0 \xi < \psi_0 < 1 + \Lambda_0 \xi
\end{equation*}
thus providing \eqref{LIMSOLBND1} and \eqref{LIMSOLBND2}.
Finally, recalling \eqref{ODEIVPSYSLIM}$_1$, we have
\begin{equation*}
    {\delta_0'}
    ={\psi_0''}\Big(\frac{\psi_0}{\xi}\Big)^{d-1}+(d-1){\psi_0'}\Big(\frac{\psi_0}{\xi}\Big)^{d-2}\frac{d}{d\xi}\Big(\frac{\psi_0}{\xi}\Big)\,.
\end{equation*}
Moroever, \eqref{ABLIML}, \eqref{derivbou} and \eqref{LIMODE} imply
$$
\left | \frac{d}{d\xi}\Big(\frac{\psi_0}{\xi} \Big ) \right |  \le  \frac{C}{\xi^2} \, , \quad 0 < \psi_0'' \le  \frac{C}{\xi^2} \, , \quad
\mbox{for  $\xi \ge 1$},
$$
and thus $\delta_0'$ is integrable on $[1,\infty)$. Since
${\delta}_0\in C^1[0,\infty)$, we conclude with \eqref{DDELTAINTEGR}.
\end{proof}

\par\smallskip

For future purposes we introduce the notation
\begin{align}
f_0(\xi \SP; v_0) &=
\Phi_{11}({\psi_0'},\tfrac{\psi_0}{\xi})\Big(\frac{\xi}{\psi_0}\Big)^{2d-2} \notag\\
&=h''(\delta_0)+g''(\delta_0(\tfrac{\xi}{\psi_0})^{d-1})
\Big(\frac{\xi}{\psi_0}\Big)^{2d-2}\label{FZDEF}\\
D_0(\xi \SP; v_0) &= -\Big( \frac{\xi}{\psi_0}\Big)^{2d-3}\delta_0 \Big( \delta_0
\Big(\frac{\xi}{\psi_0}\Big)^{d} - 1 \Big) g''( \delta_0 (\tfrac{\xi}{\psi_0})^{d-1}) \notag\\
& \;\;\quad + \Big( \frac{\xi}{\psi_0}\Big)^{d-2} \big[g'( \delta_0
(\tfrac{\xi}{\psi_0})^{d-1})-g'(\tfrac{\psi_0}{\xi}) \big] \,.
\label{DZDEF}
\end{align}


\subsubsection{Connections to equilibrium elasticity. Representations of $\Lambda_0$}\label{STATCONNECT}

In \cite{Ball82} J. Ball considers the boundary-value problem for the equations of equilibrium elasticity
\begin{equation}\label{STATPDE}
\left\{
\begin{aligned}
\mathrm{div} \SP \bigg(\frac{\del W}{\del F}(\nabla{y}) \bigg) &= 0\,, \quad \quad x\in \mathcal{B}=\{x\in\RR^d: |x|<1\} \\[2pt]
y(x) &= \lambda x\,, \quad \quad x\in\del \mathcal{B}
\end{aligned}\right.
\end{equation}
for some stretch $\lambda>0$ and studies radial solutions to \eqref{STATPDE} that have the form
\begin{equation}\label{RADSOL}
  y(x) = \frac{w(|x|)}{|x|}\, \quad \mbox{with} \quad w(R):[0,\infty) \to [0,\infty)\,.
\end{equation}
The amplitude $w$ of such solution satisfies the boundary-value problem
\begin{equation}\label{STATRPDE}
\left\{
\begin{aligned}
\frac{d}{dR} \Big(R^{d-1} \Phi_1\big(w',\frac{w}{R}\big) \Big) - (d-1)R^{d-2} \Phi_2\big(w',\frac{w}{R}\big)&=0\\
w(1) & = \lambda\,.
\end{aligned}
\right.
\end{equation}

\par\smallskip

It is proved in \cite[Theorem 7.9]{Ball82} that a solution $w_{\lambda}$ of \eqref{STATRPDE} with $w_{\lambda}(0)>0$ and $T_{rad}[w_{\lambda}](0)=0$ exists if and only if $\lambda > \lambda_{cr}$ where
\begin{equation}\label{STATSTRETCH}
  \lambda_{cr}:= \lim_{R\to \infty} \frac{w(R)}{R}
\end{equation}
and $w(R)$ is any solution to radial elastostatics \eqref{STATRPDE}$_1$ with $w(0)>0$ and $T_{rad}[w](0)=0$ (such solutions exist by \cite[Theorems 7.3, 7.7]{Ball82}); here
\begin{equation*}
  T_{rad}[w](R) := \Phi_1\big(w',\frac{w}{R}\big)\Big(\frac{R}{w}\Big)^{d-1}
\end{equation*}
denotes the radial component  of the Cauchy stress. Moreover, the solution $w_{\lambda}$ generates a cavitating weak solution to \eqref{STATPDE} via the formula \eqref{RADSOL}.

\par\smallskip

Let $(\psi_0,\delta_0)(\xi\SP; v_0)$ be the solution of \eqref{ODEIVPSYSLIM}. Then, recalling \eqref{PDEF}, observe that \eqref{LIMODE} translates into \begin{equation*}
\begin{aligned}
\frac{d}{d\xi} \big(\xi^{d-1} \Phi_1\big(\psi_0',\frac{\psi_0}{\xi}\big) \big) - (d-1)\xi^{d-2} \Phi_2\big(\psi_0',\frac{\psi_0}{\xi}\big)=0\,.
\end{aligned}
\end{equation*}
Thus, $\psi_0(\xi_0 \SP ; v_0)$ is a solution to \eqref{STATRPDE}, with $\lambda=\psi_0(1 \SP; v_0)$.  By \eqref{CSTRESSBND1}
\begin{equation*}
  T_{rad}[\psi_0( \cdot \SP;  v_0)](0) = 0 \quad \mbox{if and only if}\quad v_0=H\,
\end{equation*}
and hence  \eqref{ABLIML}, \eqref{STATSTRETCH} imply that the {\em critical stretch} $\lambda_{cr}$  for radial equilibrium elasticity
 (in the stress-free case) satisfies
\begin{equation}\label{CONNECT}
  \lambda_{cr} = \Lambda_0(H)\,.
\end{equation}


\par\medskip

\noindent{\bf Representations of $\Lambda_0$.} We briefly discuss some of the representations of $\Lambda_0(v_0)$ defined in \eqref{ABLIML}.
Suppose $d\geq 2$, $\Phi$ satisfies \eqref{spseis}-\eqref{GGROWTH} and $(\psi_0,\delta_0)(\xi\SP; v_0)$ is the solution
of \eqref{ODEIVPSYSLIM}. Then \eqref{ODEIVPSYSLIM}$_1$ and \eqref{ABLIML} imply
\begin{equation*}
 \delta_0(\xi \SP ; v_0) = \Big[ {\psi_0'}\Big(\frac{\psi_0}{\xi}\Big)^{d-1}\Big](\xi\SP ; v_0)
  \rightarrow (\Lambda_0(v_0))^d \quad
 \mbox{as} \quad \xi\to \infty\,.
\end{equation*}
Thus, integrating \eqref{ODEIVPSYSLIM}$_2$ and recalling \eqref{DDELTAINTEGR}, we conclude
\begin{equation}\label{REPRFORM1}
    \Lambda_0(v_0) = \sqrt[d]{v_0+\int^{\infty}_0 (1-d)\bigg[\frac{D_0}{\psi_0 f_0}\bigg](\xi\SP; v_0) \, d\xi }\,,
\end{equation}
with $f_0$, $D_0$ defined by \eqref{FZDEF}, \eqref{DZDEF}. Since the limiting system \eqref{ODEIVPSYSLIM} is non-singular, the integral in \eqref{REPRFORM1} is well-defined; its rate of convergence is $O(\xi^{-1})$.

Another representation can be obtained by differentiating
\begin{equation*}
\begin{aligned}
    T_{rad}[\psi_0( \xi \SP;  v_0)] &= \Big[ h'(\delta_0)+g'(\delta_0(\tfrac{\xi}{\psi_0})^{d-1})\Big( \frac{\xi}{\psi_0} \Big)^{d-1}\Big](\xi; v_0)\,
\end{aligned}
\end{equation*}
which, in view of \eqref{ODEIVPSYSLIM}$_2$, gives
\begin{equation}\label{ODET0}
\begin{aligned}
\frac{d}{d\xi}  & \Big( T_{rad}[\psi_0( \xi \SP;  v_0)] \Big) =
 \Big\{ \frac{d-1}{ \psi_0}\Big(\frac{\xi}{\psi_0}\Big)^{ d-2 }
 \Big[  g'(\tfrac{\psi_0}{\xi}) -
 \delta\Big(\frac{\xi}{\psi_0}\Big)^{d}g'(\delta_0\big(\tfrac{\xi}{\psi_0}\big)^{d-1})\Big]\Big\} (\xi \SP ; v_0)\,.
\end{aligned}
\end{equation}
(We note that \eqref{ODET0} can be directly obtained by setting $\varphi_0 = 0$ in \eqref{COMPTR2}).
Integrating the above identity over $(0,\xi)$, letting $\xi \to \infty$ and using \eqref{ABLIML} leads to
\begin{equation*}
\begin{aligned}
\chi(\Lambda_0(v_0)) =
 h'(v_0) + \int_0^{\infty} \!\!\biggl \{ \frac{d-1}{\psi_0}\Big(\frac{\xi}{\psi_0}\Big)^{d-2} \,
 \Big[ g'(\tfrac{\psi_0}{\xi}) -
 \delta_0\Big(\frac{\xi}{\psi_0}\Big)^{d}g'(\delta_0\big(\tfrac{\xi}{\psi_0}\big)^{d-1})\Big] \bigg\}(\xi\SP; v_0) \, d\xi
\end{aligned}
\end{equation*}
where
\begin{equation*}
  \chi(x) :=  \Phi_{1}  (x,x)x^{1-d}= h'(x^d)+g'(x)x^{1-d}, \quad x\in(0,\infty)\,.
\end{equation*}

Suppose that
\begin{equation}\label{chiMONT}\tag{H6}
\chi'(x)>0 \quad \mbox{and} \quad \limsup_{x \to 0_+} \big(h'(x)x\big) < 0.
\end{equation}
In that case, the inverse $\chi^{-1}$ is  well-defined and hence
\begin{equation}\label{REPRFORM2}
\begin{aligned}
 &\Lambda_0(v_0)= \\
 & \quad \chi^{-1}\bigg(h'(v_0) + \int_0^{\infty} \bigg[ \frac{d-1}{\xi}\Big(\frac{\xi}{\psi_0}\Big)^{d-2}\!\!
 \Big( g'(\tfrac{\psi_0}{\xi}) -
 \delta_0\Big(\frac{\xi}{\psi_0}\Big)^{d}g'(\delta_0\big(\tfrac{\xi}{\psi_0}\big)^{d-1})\bigg](\xi;v_0) \SP d\xi\bigg)\,.
\end{aligned}
\end{equation}
For further ideas on the representation of $\Lambda_0(v_0)$ we refer the reader to \cite{Ball82}.

{From the representation formulas \eqref{REPRFORM1}, \eqref{REPRFORM2} one can derive lower bounds for the critical stretch for cavitation
$\Lambda_0 (v_0)$. We consider first the formula \eqref{REPRFORM2} and impose in addition to \eqref{chiMONT} the
hypothesis
\begin{equation}\label{BE}\tag{H7}
 \big( g'(x)x \big)' \, \geq 0 \, , \quad x\in(0,\infty)  \, .
\end{equation}
The latter is the expression of the Baker-Ericksen inequality for stored energies of class \eqref{spseis} and is implied by rank-one convexity
(see Proposition \ref{appball}).
Using \eqref{REPRFORM2}  in conjunction to \eqref{ODEIVPSYSLIM}$_1$, \eqref{MONTL2},
\eqref{chiMONT} and \eqref{BE} we obtain
\begin{equation}
\label{critstretch1}
\Lambda_0(v_0) \,> \, \chi^{-1}(h'(v_0)) \,.
\end{equation}
}

Next we consider the representation formula \eqref{REPRFORM1} and impose the hypothesis
\begin{equation}\label{DDGX}\tag{H8}
 \big( g''(x)x \big)' \, \geq 0 \, , \quad x\in(0,\infty)  \, .
\end{equation}
Recall that $D_0$ is defined in \eqref{DZDEF} and set $a=\dot{\psi}_0$, $b=\frac{\psi_0}{\xi}$.
Using \eqref{DDGX}, and the facts $a<b$ and $g''' \leq 0$, we obtain
\begin{align*}
  D_0 
      &=  - b^{2-d} a ( ab^{-1} -1 ) g''(a) + b^{2-d}(g'(a)-g'(b))\\
      &=  b^{1-d}(a-b) \Big( -a g''(a) + b\frac{g'(a)-g'(b)}{a-b} \Big)\\
      & \, \leq \, b^{1-d}(b-a) (ag''(a) - b g''(b))  \\
      & \, \leq \, 0 \, .
 \end{align*}
In turn,   \eqref{REPRFORM1}, \eqref{FZDEF} and \eqref{GHPROP1}  imply the lower bound
\begin{equation}
\label{critstretch2}
\Lambda_0(v_0) > \sqrt[d]{v_0}\, .
\end{equation}

\subsection{Convergence to the limiting solution} \label{secCONVLIMSOL}

In this section we study the convergence of solutions  $(\psi,\delta) = (\psi,\delta)(\xi \SP; \varphi_0, V(\varphi_0))$
of the system \eqref{ODEIVPSYSRESC}  to solutions $(\psi_0,\delta_0) = (\psi_0,\delta_0)(\xi \SP; V(0))$ of the system
\eqref{ODEIVPSYSLIM}.
Using the notations \eqref{FDEF}-\eqref{DDEF}, \eqref{FZDEF}-\eqref{DZDEF}, we rewrite the equations \eqref{ODEIVPSYSRESC}$_2$,
\eqref{ODEIVPSYSLIM}$_2$ as follows
\begin{equation}\label{DDELTAODESHORT}
\begin{aligned}
{\delta'} &= -\bigg[\frac{d-1}{\psi} \Big( 1 - \frac{\varphi_0^2
\xi^2}{\widehat{Q}}\Big)\frac{D}{f}\bigg](\xi; \varphi_0, V(\varphi_0))  \\
{\delta_0'} &= -\bigg[\frac{d-1}{\psi_0}
\frac{D_0}{f_0}\bigg](\xi \SP; V(0))\,.
\end{aligned}
\end{equation}
Recall also that the initial data satisfy
\begin{equation}
\label{compdata}
\begin{aligned}
 \psi(0 \SP; \varphi_0,V(\varphi_0)) -  \psi_0(0 \SP; V(0)) &= 0  \, , \quad
 \\
 \delta(0 \SP; \varphi_0,V(\varphi_0)) -  \delta_0(0 \SP; V(0))  &= V(\varphi_0)-V(0) \, .
\end{aligned}
\end{equation}

\begin{proposition}
\label{proprescl}
Assume $d\geq 2$, $\Phi$ satisfies \eqref{spseis}-\eqref{D2HGROWTH}, and {$V:[0,\infty)\to (0,\infty)$ is a continuous function} defined by \eqref{CSTRESSBNDV1}. Let $(\psi,\delta)$, $(\psi_0,\delta_0)$ be the solutions of \eqref{ODEIVPSYSRESC}, \eqref{ODEIVPSYSLIM}, respectively. Let $\tau>0$ be fixed. Then, there exists $\widehat{\eps}_{\tau}>0$ such that for every
\begin{equation*}
0<\varphi_0 < \widehat{\eps}_{\tau}
\end{equation*}
 the interval $[0,\tau]\subset[0,\mathcal{T}(\varphi_0,V(\varphi_0)))$ and
\begin{equation}\label{ENERGYGROWTH}
\begin{aligned}
\Big( \psi(\xi \SP; \varphi_0 , V(\varphi_0)) - \psi_0(\xi \SP; V(0))\Big)^2
&+ \Big(\delta(\xi \SP ; \varphi_0 , V(\varphi_0))-\delta_0(\xi \SP; V(0))\Big)^2
\\
& \, \leq
 \widehat{C}\SP\Big( \big(V(\varphi_0)-V(0)\big)^2 + \varphi_0^4\Big)\,, \qquad  \forall \xi\in[0,\tau]
\end{aligned}
\end{equation}
with the constant $\widehat{C}=\widehat{C}(\tau,V(0))>0$ and independent of $\varphi_0$. As a consequence,
\begin{equation}\label{CONVRG}
(\psi,\delta)(\xi\SP ; \varphi_0, V(\varphi_0))  \; \to \;  (\psi_0,\delta_0)(\xi\SP ; V(0))\quad \mbox{as} \quad \varphi_0  \to 0
\end{equation}
uniformly on $[0,\tau]$.
\end{proposition}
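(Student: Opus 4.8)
The plan is to set up a Gronwall-type estimate for the difference of the two solutions on the fixed interval $[0,\tau]$, exploiting the fact that after rescaling by $\varphi_0$ the system \eqref{ODEIVPSYSRESC} is a regular (rather than singular) perturbation of \eqref{ODEIVPSYSLIM}. First I would fix $\tau>0$ and choose $\widehat{\eps}_\tau \le \eps_\tau$ (with $\eps_\tau$ from \eqref{EPSDEF}) small enough that, by Lemma \ref{SPEEDNULMM} and Lemma \ref{BNDCRL}, the interval $[0,\tau]$ lies inside the maximal interval of existence for every $\varphi_0 \in (0,\widehat{\eps}_\tau)$, and that on $[0,\tau]$ one has uniform (in $\varphi_0, v_0$ ranging over a compact neighbourhood of $V(0)$) bounds $1 \le \psi < c_3$, $0 < c_1 < \delta < c_2$ — and correspondingly $1 \le \psi_0 < c_3$, $0 < c_1 < \delta_0 < c_2$ for the limit solution. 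On this set all the quantities appearing in \eqref{FDEF}-\eqref{DDEF} and \eqref{FZDEF}-\eqref{DZDEF} are controlled: by \eqref{PHI11UPOS} and \eqref{QLBND} the denominators $f$, $\widehat{Q}$, $f_0$ stay bounded away from zero (note $f \ge h''(\delta) \ge$ positive constant since $h''>0$ and $\delta$ is in a compact set, using \eqref{GHPROP1}), and the numerators $D$, $D_0$ together with $g'$, $g''$ evaluated at the relevant arguments are bounded, using \eqref{GGROWTH}, \eqref{GAMMAHDEF} and continuity of $g', g'', h''$ on compact sets.

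Next I would write, using \eqref{DDELTAODESHORT} and $\psi' = \delta(\xi/\psi)^{d-1}$, $\psi_0' = \delta_0(\xi/\psi_0)^{d-1}$,
\begin{equation*}
\psi(\xi) - \psi_0(\xi) = \int_0^\xi \Big( \delta\big(\tfrac{\eta}{\psi}\big)^{d-1} - \delta_0\big(\tfrac{\eta}{\psi_0}\big)^{d-1}\Big)\, d\eta \, ,
\end{equation*}
\begin{equation*}
\delta(\xi) - \delta_0(\xi) = \big(V(\varphi_0) - V(0)\big) - \int_0^\xi \bigg( \Big[\tfrac{d-1}{\psi}\Big(1 - \tfrac{\varphi_0^2\eta^2}{\widehat{Q}}\Big)\tfrac{D}{f}\Big] - \Big[\tfrac{d-1}{\psi_0}\tfrac{D_0}{f_0}\Big]\bigg)\, d\eta \, .
\end{equation*}
The key step is then a Lipschitz estimate: on the compact region identified above, each of the maps $(\xi,\psi,\delta)\mapsto \delta(\xi/\psi)^{d-1}$ and $(\xi,\psi,\delta)\mapsto \tfrac{d-1}{\psi}\tfrac{D}{f}$ is $C^1$, hence Lipschitz in $(\psi,\delta)$ with a constant $L=L(\tau,V(0))$ uniform in $\varphi_0$; moreover the explicit $\varphi_0$-dependence enters only through the factor $\big(1 - \tfrac{\varphi_0^2\eta^2}{\widehat{Q}}\big)$ (whose deviation from $1$ is $O(\varphi_0^2)$ by \eqref{QLBND}, since $|\varphi_0^2\eta^2/\widehat{Q}| \le 2\tau^2\varphi_0^2/\nu^2$) and through the appearance of $\varphi_0^2\eta^2$ inside $D$ and $f$ (again contributing an $O(\varphi_0^2)$ perturbation relative to $D_0$, $f_0$, with a constant depending on $\tau$). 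Collecting these, with $E(\xi) := |\psi(\xi)-\psi_0(\xi)|^2 + |\delta(\xi)-\delta_0(\xi)|^2$, one gets
\begin{equation*}
E(\xi) \le \widehat{C}_1 \big( (V(\varphi_0)-V(0))^2 + \varphi_0^4 \big) + \widehat{C}_2 \int_0^\xi E(\eta)\, d\eta \, , \qquad \xi \in [0,\tau] \, ,
\end{equation*}
using \eqref{compdata} for the initial data and the Cauchy–Schwarz inequality on the integral terms; Gronwall's lemma then yields \eqref{ENERGYGROWTH} with $\widehat{C} = \widehat{C}(\tau,V(0))$. Finally, \eqref{CONVRG} follows because $V$ is continuous at $0$ (so $V(\varphi_0)\to V(0)$), whence the right-hand side of \eqref{ENERGYGROWTH} tends to $0$ as $\varphi_0\to 0$, uniformly in $\xi\in[0,\tau]$.

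The main obstacle I anticipate is purely bookkeeping rather than conceptual: verifying cleanly that the denominators $f$ and $\widehat{Q}$ (and $f_0$) are bounded below by a positive constant \emph{uniformly} in $\varphi_0$ on $[0,\tau]$, and that the $\varphi_0$-dependent terms buried inside $D$ and $f$ genuinely contribute only $O(\varphi_0^2)$ corrections with $\tau$-dependent (not $\varphi_0$-dependent) constants. This is where Lemma \ref{CSTRESSBNDS}, Lemma \ref{BNDCRL} and the choice \eqref{EPSDEF} of $\eps_\tau$ do the real work — in particular the bound \eqref{QLBND} is exactly what prevents $\widehat{Q}$ from vanishing — and one must be careful that all constants are taken over the compact $v_0$-range $[v_m,v_M]$ containing $V(0)$ and $V(\varphi_0)$ for small $\varphi_0$, which is legitimate precisely because $V$ is continuous at $0$.
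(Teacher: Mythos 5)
Your proposal is correct and follows essentially the same route as the paper: establish uniform bounds on $[0,\tau]$ for both $(\psi,\delta)$ and $(\psi_0,\delta_0)$ via Lemma \ref{SPEEDNULMM}, Lemma \ref{CSTRESSBNDS} and Lemma \ref{BNDCRL} (with $\widehat\eps_\tau$ chosen so that $V(\varphi_0)$ stays in a compact range near $V(0)$), then derive a Gronwall inequality for $E(\xi)=(\psi-\psi_0)^2+(\delta-\delta_0)^2$, with source terms $(V(\varphi_0)-V(0))^2$ from the data and $O(\varphi_0^4)$ from the explicit $\varphi_0^2\xi^2$ perturbation of the vector field. The paper executes the Gronwall step in differential form via the energy identity \eqref{ENERGYID} and carries out the Lipschitz estimate explicitly term by term (estimates \eqref{FFZDIFF}--\eqref{J7EST2}, with a case split $d=2$ vs.\ $d\ge 3$ for the $g'$-terms), whereas you package those same estimates as ``the right-hand side is $C^1$, hence Lipschitz on the compact region''; that packaging is legitimate but a touch quick at $\xi=0$, since the appearance of $g'(\psi/\xi)$ with $\psi/\xi\to\infty$ means one must actually invoke \eqref{GGROWTH} together with the consequence \eqref{GGRCONS} of $g'''\le 0$ to see that $\big(\tfrac{\xi}{\psi}\big)^{d-2}g'(\tfrac{\psi}{\xi})$ and its $\psi$-derivative stay bounded --- precisely the bookkeeping the paper carries out.
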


\begin{proof}
Fix $\tau>0$. In view of continuity of $V$, there exists $\tilde{\eps}>0$ such that
\begin{equation}\label{VBOUNDZ}
\frac{1}{2} V(0) < V(\varphi_0) < 2V(0)\,, \quad \forall \varphi_0 \in (0,\tilde{\eps})\,.
\end{equation}
Set $\widehat{\eps}_{\tau}:=\min(\eps_{\tau},\tilde{\eps})$, with $\eps_{\tau}$  defined in \eqref{EPSDEF}. Then, Lemma \ref{BNDCRL},
\eqref{LIMSOLBND1}$_1$, \eqref{LIMSOLBND2}, \eqref{VBOUNDZ}  imply
\begin{equation*}
[0,\tau]\subset [0,\mathcal{T}(\varphi_0,V(\varphi_0)))\, \quad \mbox{for every} \quad \varphi_0 \in (0,\widehat{\eps}_{\tau})
\end{equation*}
and there exist constants $m_1,m_2,m_3>0$ that depend on $\tau,V(0)$  and are independent of $\varphi_0$ such that
\begin{equation}\label{SOLBNDS}\vspace{2pt}
\begin{aligned}
\qquad 0 < \SP m_1 \SP & < \SP \delta(\xi \SP; \varphi_0,V(\varphi_0)), \, \delta_0(\xi \SP; V(0)) \SP < \SP m_2\\[3pt]
\qquad 1 \SP  & \leq \SP \psi(\xi \SP; \varphi_0,V(\varphi_0)),\, \psi_0(\xi \SP;V(0)) \SP
< \SP m_3
\end{aligned}
\end{equation}
for all $\varphi_0\in(0,\widehat{\eps}_{\tau})$, $\xi \in [0,\tau]$.

\par\medskip

Next, we fix $\varphi_0 \in (0,\widehat{\eps}_{\tau})$ and derive an energy identity that monitors the distance between the
solutions $(\psi,\delta)$ and $(\psi_0,\delta_0)$ of the systems \eqref{ODEIVPSYSRESC}, \eqref{ODEIVPSYSLIM}, respectively.
First, we subtract \eqref{ODEIVPSYSLIM}$_1$ from \eqref{ODEIVPSYSRESC}$_1$ to get the identity
\begin{equation}\label{DPSIDIFF}
\begin{aligned}
{\psi'} - {\psi_0'} &= \delta \Big( \frac{\xi}{\psi}\Big)^{d-1} - \delta_0 \Big(
\frac{\xi}{\psi_0}\Big)^{d-1}\\
&= (\delta - \delta_0) \Big(\frac{\xi}{\psi}\Big)^{d-1} +
(\psi_0-\psi)\frac{\delta_0}{\psi_0} \Big(\frac{\xi}{\psi}\Big)^{d-1} \sum_{i=0}^{d-2}
\Big( \frac{\psi}{\psi_0} \Big)^i\,.
\end{aligned}
\end{equation}
Subtracting the two equations in \eqref{DDELTAODESHORT} gives
\begin{equation}\label{DDELTADIFF}
\begin{aligned}
\frac{1}{d-1} (\delta' - \delta_0')  = \frac{(D_0-D)}{\psi_0 f_0}+\frac{(\psi -
\psi_0)}{\psi \psi_0 f_0}D  + \frac{(f-f_0)}{\psi f f_0}D  + \frac{ \SP \varphi_0^2\SP
\xi^2}{\widehat{Q} \SP\psi f}D\,.
\end{aligned}
\end{equation}
Multiplying \eqref{DPSIDIFF} by $(\psi-\psi_0)$ and \eqref{DDELTADIFF} by $(\delta-\delta_0) (d-1)$ and adding the results we obtain
\begin{equation}\label{ENERGYID}
\begin{aligned}
 \frac{d}{d \xi} & \Big( \frac{1}{2}(\psi - \psi_0)^2 + \frac{1}{2}(\delta-\delta_0)^2
\Big)  \\
&=(\psi-\psi_0)(\delta - \delta_0) \Big(\frac{\xi}{\psi}\Big)^{d-1} \! -
(\psi-\psi_0)^2\frac{\delta_0}{\psi_0} \Big(\frac{\xi}{\psi}\Big)^{d-1} \sum_{i=0}^{d-2}
\Big( \frac{\psi}{\psi_0} \Big)^i\\
&\qquad  (d-1)\bigg[- (D-D_0)(\delta-\delta_0)\frac{1}{\psi_0 f_0}
+(\psi -\psi_0)(\delta-\delta_0)\frac{D}{\psi \psi_0 f_0}\\
&\qquad\qquad\qquad  + (f-f_0)(\delta-\delta_0)\frac{D}{\psi f f_0}+ (\delta-\delta_0)\frac{D \SP
\varphi_0^2 \SP \xi^2}{\widehat{Q} \SP \psi f}\bigg]\,,\qquad \xi\in(0,\tau]\,.
\end{aligned}
\end{equation}

\par\smallskip

We now estimate the right-hand side of \eqref{ENERGYID}. By \eqref{SOLBNDS}
we get for $\xi \in (0,\tau]$
\begin{equation}\label{CFFBND1}
\begin{aligned}
0<\Big(\frac{\xi}{\psi}\Big)^{d-1} \!\leq \tau^{d-1}\,, \quad \Big|\frac{\delta_0}{\psi_0}
\Big(\frac{\xi}{\psi}\Big)^{d-1} \sum_{i=0}^{d-2} \Big(
\frac{\psi}{\psi_0} \Big)^i \Big| \, \leq \, \tau^{d-1}m_2 \sum_{i=0}^{d-2}m_3^i\,.
\end{aligned}
\end{equation}
Similarly, by \eqref{GHPROP2}, \eqref{QLBND}, \eqref{FDEF}, \eqref{FZDEF}, and \eqref{SOLBNDS}, we obtain


\begin{equation}\label{CFFBND2}
\begin{aligned}
\Big| \frac{1}{\psi_0 f_0}\Big| +\Big| \frac{1}{\psi \psi_0 f_0}\Big|+\Big|
\frac{1}{\psi
 ff_0}\Big| + \Big| \frac{\xi^2}{\widehat{Q}\SP\psi f}\Big| \, \leq  \, \SP\frac{\max(1,h''(m_2))}{(h''(m_2))^2} \Big( 3+\frac{2\tau^2}{\nu^2}\Big)\,.
\end{aligned}
\end{equation}
Also, using \eqref{GHPROP2}, \eqref{DDEF}, \eqref{SOLBNDS} and the inequalities \eqref{PSIPROP}$_2$, \eqref{DGPRODBND}, we obtain
\begin{equation}\label{DBND}
\begin{aligned}
\big|D(\xi \SP; \varphi_0, V(\varphi_0))\big| \, \leq \, \tau^{2d-3} m_2 (\tau^2 \varphi_0^2 +
g''(0)) + 2 \widehat{\gamma}\SP(1+\tau^{d-2})\,.
\end{aligned}
\end{equation}
Combining \eqref{ENERGYID}--\eqref{DBND}, and using Young's inequality, we get for
$\xi\in(0,\tau]$
\begin{equation}\label{ENERGYEST1}
\begin{aligned}
\frac{d}{d \xi} & \Big((\psi - \psi_0)^2 + (\delta-\delta_0)^2\Big)  \\
& \quad \leq \, C\Big((\psi-\psi_0)^2 + (\delta-\delta_0)^2 +(f-f_0)^2+(D-D_0)^2+
\varphi_0^4\Big)
\end{aligned}
\end{equation}
with $C=C(\tau,V(0))>0$ independent of $\varphi_0$.

\par\smallskip

By \eqref{SOLBNDS}, \eqref{DPSIDIFF}
and \eqref{CFFBND1} for all $\xi\in(0,\tau]$
\begin{equation}\label{FFZDIFF}
\begin{aligned}
\big|f(\xi &\SP;   \varphi_0,  V(\varphi_0))-f_0(\xi\SP; V(0))\big|\\
&\,\quad \leq \, |h''(\delta)-h''(\delta_0)|+ g''(\delta_0(\tfrac{\xi}{\psi_0})^{d-1})
\Big|\Big(\frac{\xi}{\psi}\Big)^{ 2d-2 }-\Big(\frac{\xi}{\psi_0}\Big)^{2d-2}\big|\\
& \;\;\qquad +\Big(\frac{\xi}{\psi}\Big)^{2d-2} |g''(\delta(\tfrac{\xi}{\psi})^{d-1})
-g''(\delta_0(\tfrac{\xi}{\psi_0})^{d-1})| \\
&\, \quad \leq \, |\delta-\delta_0| \SP
\sup_{x\in[m_1,\,m_2]}|h'''(x)| + \tau^{2d-2} |\psi_0-\psi|\,   g''(0) \sum_{i=0}^{2d-3} m_3^i\\
& \,\,\qquad+\tau^{3d-3} \Big(|\delta - \delta_0| + |\psi_0-\psi| m_2 \sum_{i=0}^{d-2} m_3
^i \Big) \sup_{x\in[0,m_2\tau^{d-1}]} |g'''(x)|\,.
\end{aligned}
\end{equation}

Next, by \eqref{DDEF} and \eqref{DZDEF}
\begin{equation}\label{DDZDIF}
\begin{aligned}
&|D(\xi \SP; \varphi_0, V(\varphi_0))  - D_0(\xi \SP; V(0))|\\[3pt]
&\quad  \leq \Bigg\{\,\Big|\Big(\frac{\xi}{\psi}\Big)^{2d-3}
  \delta \Big(\delta \Big(\frac{\xi}{\psi} \Big)^d -1 \Big)
\, \xi^2 \varphi_0^2 \Big|\\
& \;\quad\qquad + \Big| \Big(\frac{\xi}{\psi_0}\Big)^{2d-3} -
\Big(\frac{\xi}{\psi}\Big)^{2d-3}\Big| \Big| \delta_0 \Big(\delta_0
\Big(\frac{\xi}{\psi_0} \Big)^d -1
\Big) g''(\delta_0\big(\tfrac{\xi}{\psi_0}\big)^{d-1}) \Big|\\
& \;\quad\qquad + \Big| \delta_0  - \delta \Big| \Big|
\Big(\frac{\xi}{\psi}\Big)^{2d-3}\Big(\delta_0 \Big(\frac{\xi}{\psi_0} \Big)^d -1
\Big) g''(\delta_0\big(\tfrac{\xi}{\psi_0}\big)^{d-1}) \Big|\\
& \;\qquad\quad + \Big| \delta_0 \Big(\frac{\xi}{\psi_0} \Big)^d  - \delta
\Big(\frac{\xi}{\psi} \Big)^d  \Big| \Big| \delta
\,\Big(\frac{\xi}{\psi}\Big)^{2d-3}g''(\delta_0\big(\tfrac{\xi}{\psi_0}\big)^{d-1})
\Big|\\
& \;\qquad\quad + \Big| g''(\delta_0\big(\tfrac{\xi}{\psi_0}\big)^{d-1})  -
g''(\delta\big(\tfrac{\xi}{\psi}\big)^{d-1}) \Big| \Big|  \Big(
\delta\Big(\frac{\xi}{\psi} \Big)^d - 1\Big)\delta
\,\Big(\frac{\xi}{\psi}\Big)^{2d-3}\Big|\Bigg\}\\
& \;\qquad\quad + \Big| \Big(\frac{\xi}{\psi}\Big)^{d-2}
g'(\delta\big(\tfrac{\xi}{\psi}\big)^{d-1})-\Big(\frac{\xi}{\psi_0}\Big)^{d-2}
g'(\delta_0\big(\tfrac{\xi}{\psi_0}\big)^{d-1})\Big|\\
& \;\qquad\quad + \Big| \Big(\frac{\xi}{\psi}\Big)^{d-2} g'(\tfrac{\psi}{\xi}) -
\Big(\frac{\xi}{\psi_0}\Big)^{d-2} g'(\tfrac{\psi_0}{\xi}) \Big|=: J_1+J_2+J_3\,.
\end{aligned}
\end{equation}
Using \eqref{GHPROP2}, \eqref{PSIPROP}$_2$, \eqref{SOLBNDS}, \eqref{DPSIDIFF}, and \eqref{CFFBND1} we obtain
\begin{equation}\label{J1J5EST}
\begin{aligned}
J_1 & \, \leq   \,\tau^{2d-1} m_2 \SP \varphi_0^2 + \tau^{2d-3}|\psi_0-\psi| g''(0) \SP m_2 \!
\sum_{i=0}^{2d-4}m_3^i + \tau^{2d-3} g''(0) |\delta_0-\delta|\\
& \;\quad + \,m_2 \,\tau^{3d-3}g''(0)\,\Big(|\delta -
\delta_0| +|\psi_0-\psi| m_2 \sum_{i=0}^{d-1}m_3^i\Big)  \\
& \;\quad + m_2\tau^{3d-4}\Big(|\delta - \delta_0| + |\psi_0-\psi| m_2
\sum_{i=0}^{d-2}m_3^i\Big) \sup_{x\in[0,m_2\tau^{d-1}]} |g'''(x)|.
\end{aligned}
\end{equation}

To estimate terms $J_2$, $J_3$ we consider two separate cases for the constant $d$.

\par\smallskip

\noindent{\it Case $ d=2$.} First, using \eqref{GHPROP2},
 \eqref{DPSIDIFF}, \eqref{CFFBND1}, we obtain
\begin{equation}\label{J6EST1}
\begin{aligned}
J_2 & 
  \leq \, g''(0) \,\tau^{d-1} \Big(|\delta - \delta_0| + |\psi_0-\psi|\,m_2
\sum_{i=0}^{d-2}m_3^i \Big)\,.
\end{aligned}
\end{equation}
Then by
\eqref{GHPROP1}, \eqref{GHPROP2}, and \eqref{GGROWTH} we obtain for $x\in[0,\infty)$
\begin{equation}\label{D2GXBND}
  0 \SP \leq \SP g''(x)x \SP \leq \SP \int_0^1 \, g''(sx)x\, ds = g'(x)-g'(0)\, \leq \gamma-g'(0)\,.
\end{equation}
Then, using \eqref{GHPROP2}, \eqref{D2GXBND}, and the fact that $\psi,\psi_0 \geq
1$, we obtain
\begin{equation}\label{J7EST1}
\begin{aligned}
J_3  \, = \, \Big| g'(\tfrac{\psi}{\xi})-g'(\tfrac{\psi_0}{\xi})\Big| \, \leq \,
g''(\tfrac{1}{\xi})\Big|\frac{\psi}{\xi} - \frac{\psi_0}{\xi}\Big| \, \leq \,
(\gamma-g'(0))\,|\psi - \psi_0|.
\end{aligned}
\end{equation}

\par\smallskip

\noindent{\it Case $ d\geq 3$.} In that case, using \eqref{SOLBNDS} and \eqref{LIMSOLBND1}, we get
\begin{equation}\label{J6EST2}
\begin{aligned}
J_2 &  \leq \, g''(0) \,\tau^{2d-3}
\Big(|\delta - \delta_0| + |\psi_0-\psi| \,m_2 \sum_{i=0}^{d-2}m_3^i\Big)\,\\
& \; \quad
+   \tau^{d-2} |\psi_0 - \psi|  \sum_{i=0}^{d-3}m_3^i  \sup_{x\in[0,\Lambda_0]} |g'(x)|
\end{aligned}
\end{equation}
and, similarly, using \eqref{GHPROP2} and the bound \eqref{DGPRODBND}, we obtain
\begin{equation}\label{J7EST2}
\begin{aligned}
J_3  \, & \leq \, \Big| \Big(\frac{\xi}{\psi} \Big)^{d-2}  - \Big(\frac{\xi}{\psi_0}
\Big)^{d-2}\Big| \big| g'(\tfrac{\psi}{\xi}) \big|
+\Big(\frac{\xi}{\psi_0}\Big)^{d-2}\big|g'(\tfrac{\psi}{\xi})-g'(\tfrac{\psi_0}{\xi})\big|
\\
&\leq \Big(|\psi_0-\psi| \sum_{i=0}^{d-3}m_3^i\Big)\Big| \Big(\frac{\xi}{\psi}
\Big)^{d-2}g'(\tfrac{\psi}{\xi}) \Big|+\xi^{d-2} g''(0) \Big|\frac{\psi}{\xi}
-\frac{\psi_0}{\xi}\Big|
\\
& \leq \Big(|\psi_0-\psi| \sum_{i=0}^{d-3}m_3^i\Big)
\,2\widehat{\gamma}\,(1+\tau^{d-2}) +\tau^{d-3} g''(0) |\psi-\psi_0|\,.
\end{aligned}
\end{equation}

Combining the estimates \eqref{ENERGYEST1}--\eqref{J7EST2} we obtain
\begin{equation}\label{ENERGYEST2}
\begin{aligned}
\frac{d}{d \xi} & \Big( (\psi - \psi_0)^2 + (\delta-\delta_0)^2\Big) \leq
C\Big((\psi-\psi_0)^2 + (\delta-\delta_0)^2 + \varphi_0^4\Big)\,, \quad \xi\in(0,\tau]
\end{aligned}
\end{equation}
with $C=C(\tau,v_0)>0$ independent of $\varphi_0$.
Then  Gronwall's  lemma and \eqref{compdata} yield
\eqref{ENERGYGROWTH} and  \eqref{CONVRG}.
\end{proof}


\subsection{The critical stretching for dynamic bifurcation}

By Proposition \ref{LIMSOLLMM} for each $v_0>0$ there exists a
unique $\Lambda_0=\Lambda_0(v_0)$ such that
\begin{equation}\label{BIFPOINT}
0\,<\,\Lambda_0(v_0) =
    \lim_{\xi\to\infty}\frac{\psi_0(\xi\SP;v_0)}{\xi}=\lim_{\xi\to\infty}{{\psi_0'}(\xi\SP;v_0)}\,,
\end{equation}
where $(\psi_0,\delta_0)(\xi\SP; v_0)$ is a global solution of \eqref{ODEIVPSYSLIM}. In this section, we will show that $\Lambda_0(V(0))$, with $V$ defined by \eqref{CSTRESSBNDV1}, is the critical stretching for  {\it dynamic bifurcation} from the uniformly deformed state
for the system \eqref{ODEIVPSYS},  \eqref{CSTRESSBND2}.

\begin{theorem}\label{BIFPNTTHM}

Assume $d\geq 2$, $\Phi$ satisfies \eqref{spseis}-\eqref{D2HGROWTH}, and $V$ is defined by \eqref{CSTRESSBNDV1}. Let $(\varphi,v)$ be as in Theorem \ref{CAVSOLEXISTSP}, and let $\sigma$, $\Lambda$, $\Lambda_0$  be defined by \eqref{SIGMAMAPDEF}, \eqref{STRETCHDEF}, and \eqref{BIFPOINT}, respectively. Then,

\begin{itemize}
\item [$(i)$]
\begin{equation}\label{STRCHLIM}
\lim_{\varphi_0 \to 0_+} \Lambda(\varphi_0,V(\varphi_0)) = \Lambda_0(V(0))
\end{equation}

\item [$(ii)$] The strength of the shock and its speed satisfy
\begin{equation}\label{SHSTRLIM}
\lim_{\varphi_0\to 0_+} \Big[ \frac{\varphi}{s}-\dot{\varphi} \Big](\sigma(\varphi_0,V(\varphi_0))
\SP; \varphi_0,V(\varphi_0)) = 0
\end{equation}
\begin{equation}\label{SHSPMAXTLIM}
\begin{aligned}
\lim_{\varphi_0\to 0_+} \sigma(\varphi_0,V(\varphi_0)) =    \sqrt{\Phi_{11}(\Lambda_0(V(0)),\Lambda_0(V(0))) } =:  \sigma_0 >0.
\end{aligned}
\end{equation}

\item [$(iii)$] The solutions of \eqref{ODEIVPSYS} satisfy
\begin{equation}\label{SOLLIM}
\begin{aligned}
&\lim_{\varphi_0\to 0_+} \varphi(s \SP;\varphi_0,V(\varphi_0)) = \Lambda_0(V(0)) s \, , \quad 0 \le s < \sigma_0
\\
&\lim_{\varphi_0\to 0_+} v(s \SP;\varphi_0,V(\varphi_0)) =
\begin{cases}
V(0), & s=0\\
\big[\Lambda_0(V(0))\big]^d, & 0<s < \sigma_0
\end{cases}
\end{aligned}
\end{equation}

\end{itemize}
\end{theorem}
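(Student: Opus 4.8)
\emph{Strategy.} The plan is to combine the monotonicity of the cavitating profile (Theorem~\ref{CAVSOLEXISTSP}, Lemma~\ref{resccavexist}) with the convergence of the rescaled solutions on compact sets (Proposition~\ref{proprescl}) and the behaviour at infinity of the inner solution (Proposition~\ref{LIMSOLLMM}). Throughout I abbreviate $v_0=V(\varphi_0)$, write $(a,b)=(\dot\varphi,\varphi/s)$ for the cavitating solution and $\widehat\sigma(\varphi_0):=\sigma(\varphi_0,V(\varphi_0))/\varphi_0$ for the connection point in the rescaled variable, so that, since $\varphi$ is continuous across the shock, $\Lambda(\varphi_0,V(\varphi_0))=b(\sigma)=\psi(\widehat\sigma;\varphi_0,V(\varphi_0))/\widehat\sigma$. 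The anticipated difficulty is that $\widehat\sigma(\varphi_0)\to\infty$ as $\varphi_0\to0$, so one cannot evaluate the convergence of Proposition~\ref{proprescl} at $\widehat\sigma$; the remedy is that the monotone functions $\psi'$ (increasing) and $\psi/\xi$ (decreasing) trap $\Lambda(\varphi_0,V(\varphi_0))$ between their values at any \emph{fixed} $\xi=\tau$.

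\emph{Step 1: the connection point stays away from zero.} Evaluating the Rankine--Hugoniot relation $p(\sigma)=0$ (see \eqref{SHKCOND}, \eqref{pDEF}) and applying the mean value theorem to $t\mapsto\Phi_1(t,b(\sigma))$ gives $\sigma^2=\Phi_{11}(c^\ast,b(\sigma))$ for some $c^\ast\in[a(\sigma),b(\sigma)]$ (in the sonic case $a(\sigma)=b(\sigma)=c^\ast$ this is immediate). Hypothesis \eqref{D2HGROWTH}, in the form \eqref{PHI11UPOS}, then yields $\sigma(\varphi_0,V(\varphi_0))\ge\nu>0$, hence $\widehat\sigma(\varphi_0)\ge\nu/\varphi_0\to+\infty$. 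In particular, given any fixed $\tau>0$, one has $\tau<\widehat\sigma(\varphi_0)$ for all sufficiently small $\varphi_0$.

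\emph{Step 2: the squeeze, and parts (i)--(ii).} Fix $\tau>0$. For $\varphi_0$ small enough that $\tau<\widehat\sigma(\varphi_0)$ and that Proposition~\ref{proprescl} applies on $[0,\tau]$, the monotonicity relations \eqref{MONTR1}--\eqref{MONTR2} (with $\varphi_0\tau<\sigma$) give
\[
\psi'(\tau;\varphi_0,V(\varphi_0))\ \le\ a(\sigma)\ \le\ b(\sigma)\ =\ \Lambda(\varphi_0,V(\varphi_0))\ \le\ \tfrac{1}{\tau}\psi(\tau;\varphi_0,V(\varphi_0)).
\]
Letting $\varphi_0\to0$ and using Proposition~\ref{proprescl} together with the identity $\psi'=\delta(\xi/\psi)^{d-1}$ (and $\psi\ge1$) yields $\psi_0'(\tau)\le\liminf_{\varphi_0\to0}\Lambda\le\limsup_{\varphi_0\to0}\Lambda\le\psi_0(\tau)/\tau$; then letting $\tau\to\infty$ and invoking \eqref{ABLIML} gives $\Lambda(\varphi_0,V(\varphi_0))\to\Lambda_0(V(0))$, which is \eqref{STRCHLIM}. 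The same chain forces $a(\sigma)\to\Lambda_0(V(0))$, and since $b(\sigma)=\Lambda(\varphi_0,V(\varphi_0))\to\Lambda_0(V(0))$ this is \eqref{SHSTRLIM}. Feeding $a(\sigma),b(\sigma)\to\Lambda_0(V(0))$ back into $\sigma^2=\Phi_{11}(c^\ast,b(\sigma))$ and using continuity of $\Phi_{11}$ and \eqref{PHI11UPOS} gives \eqref{SHSPMAXTLIM} with $\sigma_0>0$.

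\emph{Step 3: part (iii), and remarks.} Fix $s\in(0,\sigma_0)$. Since $\sigma(\varphi_0,V(\varphi_0))\to\sigma_0>s$, for small $\varphi_0$ the point $s$ lies in the smooth cavitating range $(0,\sigma)$, so $a(s)=\psi'(s/\varphi_0;\varphi_0,V(\varphi_0))$ and $b(s)=\varphi_0\,\psi(s/\varphi_0;\varphi_0,V(\varphi_0))/s$ are well defined; running the identical squeeze now between $\xi=\tau$ and $\xi=s/\varphi_0$ (using $\varphi_0\tau<s<\sigma$, then $\varphi_0\to0$, then $\tau\to\infty$) forces $a(s)\to\Lambda_0(V(0))$ and $b(s)\to\Lambda_0(V(0))$. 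Hence $\varphi(s;\varphi_0,V(\varphi_0))=s\,b(s)\to\Lambda_0(V(0))\,s$ and $v(s;\varphi_0,V(\varphi_0))=a(s)\,b(s)^{d-1}\to\Lambda_0(V(0))^d$, while $v(0;\varphi_0,V(\varphi_0))=V(\varphi_0)\to V(0)$ by continuity of $V$; together with $\varphi(0;\varphi_0,V(\varphi_0))=\varphi_0\to0$ this is \eqref{SOLLIM}. The argument uses only $d\ge2$, so Theorem~\ref{MAINRESULTD3} plays no role; the one genuinely nontrivial ingredient is Step~1 together with the realisation in Step~2 that the escape of $\widehat\sigma$ to infinity is harmless precisely because the monotone quantities $\psi'$ and $\psi/\xi$ sandwich $\Lambda$ at every fixed $\tau$, and both sandwiching bounds converge to $\Lambda_0(V(0))$ after the double limit $\varphi_0\to0$, $\tau\to\infty$.
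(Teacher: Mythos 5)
Your proof is correct and follows essentially the same route as the paper's: the uniform lower bound $\sigma(\varphi_0,V(\varphi_0))\geq\nu$ from \eqref{D2HGROWTH} and the Rankine--Hugoniot relation, the monotonicity of the rescaled quantities $\psi'$ and $\psi/\xi$ to control $\Lambda$ at a fixed $\tau$, Proposition~\ref{proprescl} for convergence on compacts, and \eqref{ABLIML} to close the double limit. The only difference is one of bookkeeping: the paper exploits the quantitative decay $0<\psi(\tau)/\tau-\Lambda<1/\tau$ (uniformly in $\varphi_0$, from integrating the bound $0<\psi/\xi-\psi'<1/\xi$) to run an explicit $\varepsilon$-argument, while you use the qualitative sandwich $\psi'(\tau)\le\Lambda\le\psi(\tau)/\tau$ and then a $\liminf$/$\limsup$ double limit in $\varphi_0\to0$, $\tau\to\infty$; the two are interchangeable here since both bounding sequences converge to $\Lambda_0(V(0))$.
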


\begin{proof}
We recall that  $(\varphi , v)(s ; \varphi_0 , V(\varphi_0))$ are defined for $s \in [0, \sigma(\varphi_0, V(\varphi_0))]$ and solve the initial value problem
\eqref{ODEIVPSYS} with $v_0 = V(\varphi_0)$.  At the point $\sigma := \sigma(\varphi_0, V(\varphi_0))$ there is a shock (or sonic singularity).
If we have a shock at $\sigma$, then \eqref{SHKCOND}, \eqref{SIGMAMAPDEF}, \eqref{MONT2}, \eqref{GHPROP2} and \eqref{D2HGROWTH} imply
\begin{equation}
\label{SSBND}
\begin{aligned}
\sigma (\varphi_0, V(\varphi_0))
&= \left .  \sqrt{ \frac{\Phi_1( \dot \varphi, \frac{\varphi}{s} )-
    \Phi_1(\frac{\varphi}{s}, \frac{\varphi}{s})}{\dot \varphi -  \frac{\varphi}{s} } } \; \;
    \right  |_{s = \sigma}
\ge
 \left . \sqrt{ \Phi_{11}  (\frac{\varphi}{s}, \frac{\varphi}{s})  } \right  |_{s = \sigma}  \ge \nu\,,
 \quad \forall \varphi_0 > 0 \, .
 \end{aligned}
\end{equation}
If $\sigma$ is a sonic singularity then the same conclusion follows from Theorem \ref{UNIQCPT} (ii), in conjunction with \eqref{QDEFSP} and
\eqref{QABLIM}.

By \eqref{RESCSOL} the rescaled functions $(\psi, \delta) (\xi ; \varphi_0, V(\varphi_0))$ are well-defined for $\xi \in [0, \xi^* (\varphi_0) ]$, where
\begin{equation}\label{XISHK}
    \xi^*(\varphi_0) := \frac{\sigma(\varphi_0,V(\varphi_0))}{\varphi_0} \, ,
\end{equation}
and satisfy the initial value problem \eqref{ODEIVPSYSRESC}. Note that by \eqref{STRETCHDEF} and \eqref{RESCSOL}
\begin{equation}\label{STRETCHRESC}
\Lambda(\varphi_0,V(\varphi_0)) =
\bigg[\SP \frac{\varphi}{s}  \SP\bigg] (  \sigma (\varphi_0) ; \varphi_0,V(\varphi_0) ) =
\bigg[\SP \frac{\psi}{\xi}\SP\bigg ] \big(\xi^*(\varphi_0),\varphi_0,V(\varphi_0)\big)  \, .
\end{equation}

Consider the function $\psi = \psi (\xi ; \varphi_0 , V(\varphi_0))$ and recall that $\psi - \xi \psi' > 0$ and $\frac{d}{d\xi} (\psi - \xi \psi' ) < 0$.
Hence
\begin{equation}
\label{derivdecay}
0 < \frac{\psi}{\xi} - \psi' < \frac{1}{\xi} \, , \quad 0 < \xi \le \xi^* (\varphi_0) \,
\end{equation}
{and therefore
\begin{equation}\label{DPSIDECAY}
    0 < \, -\frac{d}{d\xi} \bigg(\frac{\psi(\xi\SP ; \varphi_0,V(\varphi_0))}{\xi}\bigg)  = \frac{1}{\xi} \bigg(\frac{\psi}{\xi} - \psi' \bigg)  < \, \frac{1}{\xi^2}\,, \quad
    0 < \xi \le \xi^* (\varphi_0)\,.
\end{equation} }
Then \eqref{STRETCHRESC} and  \eqref{DPSIDECAY} imply
\begin{equation}
\label{DIST2}
\begin{aligned}
&0 \, <  \, \frac{\psi(\tau \SP; \varphi_0,V(\varphi_0))}{\tau} - \Lambda(\varphi_0,V(\varphi_0))  \,
 = \, -  \int_{\tau}^{\xi^*(\varphi_0)} \frac{d}{d\xi}\left ( \frac{\psi}{\xi} \right ) \, d\xi  \, <
\, \frac{1}{\tau} \, , \quad 0 < \tau \le \xi^* (\varphi_0) \, .
\end{aligned}
\end{equation}

Next, we consider the functions $(\psi_0 , \delta_0) (\xi ; V(0))$ defined on $[0, \infty)$ and satisfying \eqref{ODEIVPSYSLIM} with $v_0 = V(0)$ .
A similar argument for $\psi_0$  shows that
\begin{equation}\label{DIST3}
\begin{aligned}
0 \, <  \, \frac{\psi_0(\tau \SP; V(0))}{\tau} - \Lambda_0(V(0))  \, <  \, \int_\tau^\infty \frac{1}{\xi^2} \, d\xi =  \frac{1}{\tau} \, ,
\quad 0 < \tau < \infty \, .
\end{aligned}
\end{equation}

We proceed to show \eqref{STRCHLIM}. Fix $\eps>0$ and select $\tau=({3}/{\eps})$ and $\bar \alpha = \frac{\eps \nu}{3}$. If we
restrict $\varphi_0 \in [0, \bar \alpha]$ then \eqref{SSBND}, \eqref{XISHK} imply that the interval
$$
[0, \tau] \subset [0, \xi^* (\varphi_0)] \subset [0,\mathcal{T}(\varphi_0,V(\varphi_0)))
$$
Proposition \ref{proprescl} then implies, by restricting $\alpha$ further (if necessary), that for $\varphi_0 \in [0, \alpha]$
\begin{equation}\label{DIST1}
    \Big| \frac{\psi(\tau \SP; \varphi_0,V(\varphi_0))}{\tau}-
    \frac{\psi_0(\tau\SP;V(0))}{\tau}\Big|< \frac{\eps}{3} \, .
\end{equation}
Then combining \eqref{DIST1}, \eqref{DIST2} and \eqref{DIST3} we arrive at the desired
$$
| \Lambda (\varphi_0 , V(\varphi_0) ) - \Lambda_0 ( V(0) ) | < \eps \, .
$$

We next use the scaling transformation \eqref{RESCSOL} to re-express the inequalities \eqref{DIST2} and \eqref{derivdecay}
into the forms
\begin{align}
\label{sollim1}
0 <  \frac{ \varphi (s) }{s} - \Lambda (\varphi_0, V(\varphi_0) ) < \frac{\varphi_0}{s} \, , \quad 0 < s \le \sigma(\varphi_0 , V(\varphi_0)) \, ,
\\
\label{sollim2}
0 < \frac{ \varphi (s) }{s} - \dot \varphi (s) < \frac{\varphi_0}{s} \, , \quad 0 < s \le  \sigma(\varphi_0 , V(\varphi_0)) \, .
\end{align}
From these, in conjunction with \eqref{STRCHLIM}, we deduce \eqref{SHSTRLIM} and \eqref{SOLLIM}. Finally,
\eqref{SHSPMAXTLIM} follows by passing to the limit $\varphi_0 \to 0_+$ in \eqref{SSBND} using \eqref{sollim1} and \eqref{sollim2}.
\end{proof}

 \begin{figure}[t] \centering
\includegraphics*[width=4.5in, height=3in]{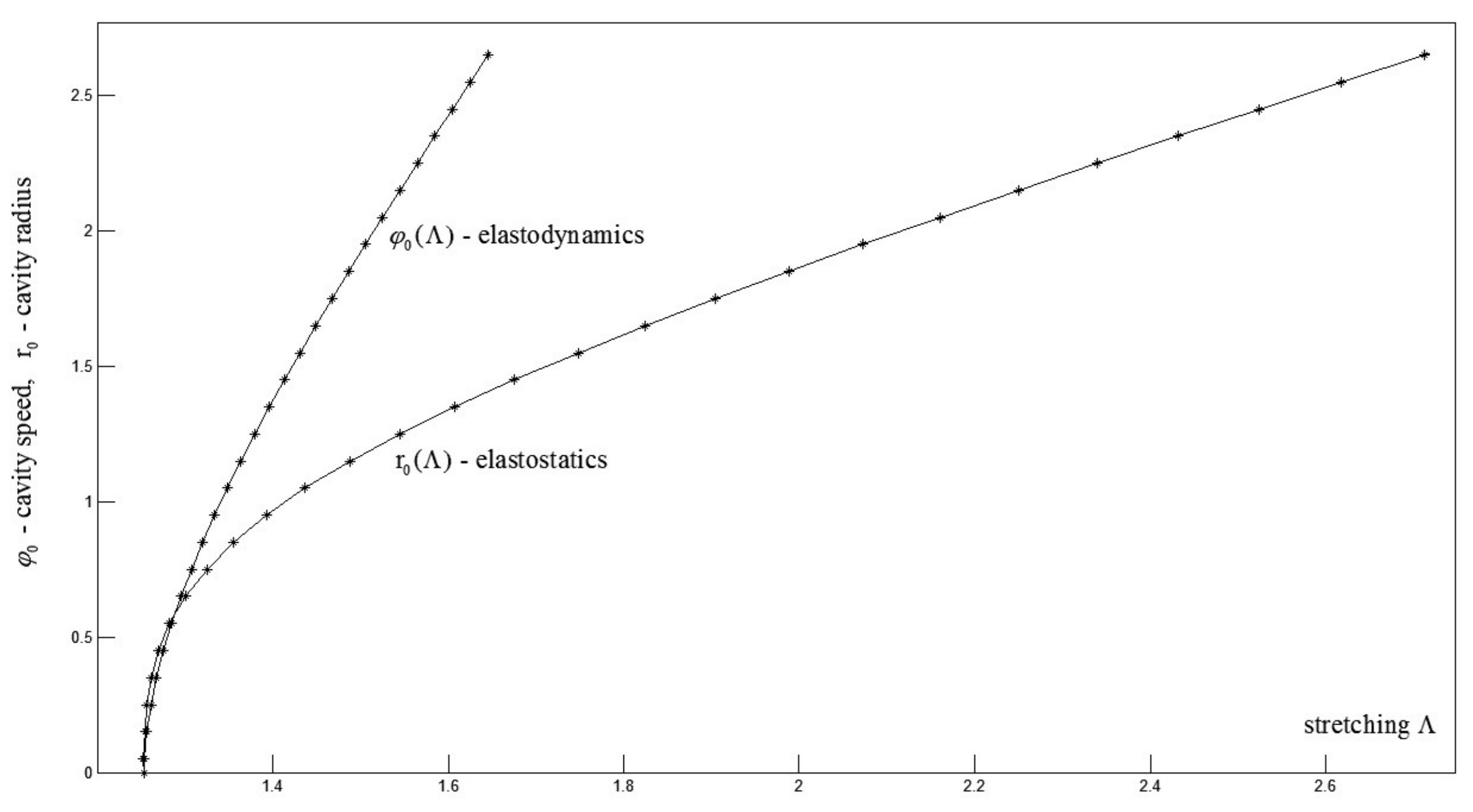}
\caption{ Bifurcation curves for statics and dynamics: $g(x)=\frac{1}{2}x^{2}\,,h(x)=(x-1)\ln(x)$, $T_{rad}(0)=0.$} \label{DYNSTAT}
\end{figure}

\begin{remark}\rm
Given a relation $T_{rad}(0 \SP; \varphi_0, v_0) = G(\varphi_0)$, with $G$  continuous at $\varphi_0=0$,  Theorem \ref{BIFPNTTHM} states that the critical point for a cavity with content is given by
  \begin{equation}\label{CRITCONT}
\Lambda_0(h'^{-1}(G(0))) = \lim_{\varphi_0 \to 0_+} \Lambda(\varphi_0,h'^{-1}(G(\varphi_0)))\,.
 \end{equation}
 For a stress-free cavity ($G(\varphi_0)\equiv 0$) the result \eqref{CRITCONT} means that
\begin{equation*}
  \lim_{\varphi_0 \to 0_+} \Lambda(\varphi_0,H) = \Lambda_0(H) = \lambda_{cr}\,,
\end{equation*}
where $\lambda_{cr}$ is the {\em critical stretching} associated with cavitating solutions for equilibrium elasticity
analyzed in \cite[Section 7.5]{Ball82}; see also Section \ref{STATCONNECT}. Thus, the critical values of the dynamic and equilibrium bifurcation diagrams
(in the stress-free case) coincide. In Fig. \ref{DYNSTAT} the bifurcation diagrams for the dynamic and the equilibrium radial elasticity
are compared numerically for stress-free cavities.
\end{remark}

{\section{Conclusions, open problems}

As already mentioned, this article complements \cite{Sp, Sp2} and completes a theory of dynamically cavitating weak solutions
for \eqref{RPDEINTRO}, the equations of dynamic,  radial elasticity for isotropic materials. The solutions are self-similar and satisfy
\eqref{SSIMODE}, \eqref{initcav} and \eqref{SSIMTRIV}. The emerging solution is a regular weak solution of the equations of radial elasticity,
which consists of an opening cavity at the center, followed by a smoothly varying part, and then by a precursor to the cavity shock,
that connects the smooth part of the solution to a uniform deformation at the far field.

There are two techniques for constructing the wave pattern that forms the cavitating solution.
According to one approach,  introduced in \cite{Sp} and followed here, one starts from the cavity center
and proceeds to connect through a shock to the uniformly deformed state.
An alternative,  followed in \cite{Sp2}, is to start from the outer part of the wave, the uniform deformation at the far field,
and construct a wave pattern followed by a shock and the smooth part of the cavitating solution,
using a shooting argument to guarantee  that the radial Cauchy stress achieves the value zero (and thus a cavity) at the center.
Both techniques have their advantages, however the first approach provides a way of handling any cavitating solution,
and it serves as a starting point to carry out the analysis of the dynamic bifurcation curve. It turns out that there is a critical
stretching $\lambda_{cr}$ for obtaining cavitating solutions and it coincides with the critical stretching predicted by the associated
equilibrium elasticity problem.

The resulting solution offers a striking example of non uniqueness for the equations of radial isotropic elastodynamics,
because, as noted in \cite[Thm 7.2]{Sp},  the total mechanical energy of the solution with the cavity is less than the total energy
of the homogeneous deformation, In the parlance of conservation law theory, this provides an example of non-uniqueness
of entropy weak solutions (for polyconvex energies) due to point-singularities at the cavity.
As opening a cavity decreases the energy, this provides an autocatalytic mechanism for material failure.
This  paradox was resolved in \cite{GT14}, where the question is raised whether remaining at the level of weak solutions
is sufficient for describing singular phenomena like cavitation or shear bands,
that lie at the limits (perhaps even outside) of continuum modeling.
The notion of singular limiting induced by continuum ({\it slic})-solution is introduced,
according to which a singular solution is a slic-solution if it can be realized as limit of  spatial averagings
of the singular weak solution.
This definition is tested  for the cavitating solution,
and it turns out that local spatial averaging produces a surface energy cost at the opening cavity that
renders the uniform deformation the energetically preferred solution (see \cite{GT14}).

An important open problem is thus to come up with a solution concept that accounts for surface energies on singular objects.
The calculus of variations literature has done steps in this direction ({\it e.g.}  \cite{MSP,HM10} and references therein)
but there is  currently no available notion of solution that accounts for surface energies on singular objects and is applicable at
the dynamic level.

Another open problem is the following: Despite the existing constructions of cavitating weak solution obtained in
\cite{Sp,Sp2} and here, there is no simple understanding of the mechanism of loss of stability
of the uniform deformation that leads to cavitation, or how such a criterion relates to the critical stretching computed here.
This is an important research direction for the dynamic problem that needs to be understood.

}

\section{Appendix}

\subsection{Gradients of radial functions}

Consider a radial function $y:\RR^d \to \RR^d$ of the form
\begin{equation}\label{RADY}
y(x) = w(R) \frac{x}{R}, \quad R := |x| \quad \mbox{with} \quad w(R):(0,\infty) \to \,
\RR \, .
\end{equation}

\begin{theorem}[J. Ball \cite{Ball82}]\label{GRADYTHM}
Let $d>1$, let $1 \leq p < \infty$ and $y$ be given by \eqref{RADY}. Then:

\begin{itemize}
\item[$(i)$] $y \in L^p_{loc}(\RR^d)$  if and only if
\begin{equation*}
    \int^{\rho}_0 \, R^{d-1} |w(R)|^p dR \, < \infty  \quad \mbox{for all} \quad \rho \in
    (0,\infty)\,.
\end{equation*}

\item[$(ii)$] $y \in W^{1,p}_{loc}(\RR^d)$  if and only if \SP $w(R)$ is absolutely
continuous on $(0,\infty)$ and
\begin{equation}\label{DYINTEGR}
    \int^{\rho}_0 \, R^{d-1} \bigg( |w_R|^p + \Big| \frac{w}{R}\Big|^p \bigg) dR \, <
    \infty \, \quad \mbox{for all} \quad  \rho \in (0,\infty)\,.
\end{equation}

\item[$(iii)$] If \eqref{DYINTEGR} holds for (say) $\rho =1$ then
\begin{equation}\label{TRACE}
  \big | w(R) \big |^p R^{d-p}  \to 0 \quad \mbox{as $R \to 0$}.
\end{equation}

\item[$(iv)$] If  $y \in W^{1,1}_{loc}(\RR^d)$  then
\begin{equation}\label{GRADY}
    \nabla y(x) = w_R  \frac{x \otimes x}{R^2}   +  \frac{w}{R}  \Big ( {\Id} - \frac{x \otimes  x}{R^2}  \Big ) \quad
    \mbox{in  $\cD'(\RR^d)$  and a.e. $x\in \RR^d$} \,.
\end{equation}
\end{itemize}
\end{theorem}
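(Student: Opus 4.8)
The plan is to handle the four parts in order, the recurring theme being that for $d\ge 2$ the origin is a removable singularity for the Sobolev condition. Part (i) is immediate: since $|y(x)|=|w(|x|)|$ and $y$ is measurable, passing to polar coordinates gives $\int_{B_\rho}|y|^p\,dx=\omega_{d-1}\int_0^\rho R^{d-1}|w(R)|^p\,dR$, which is exactly the stated equivalence.

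For (ii) I would first prove the "only if" direction. If $y\in W^{1,p}_{loc}(\RR^d)$ then $y\in W^{1,p}$ on every annulus $\{a<|x|<b\}$, $0<a<b$; restricting to almost every radial ray (Fubini / ACL characterization) shows $w$ is absolutely continuous on $(a,b)$, so the classical computation yields $\nabla y=w_R\,\frac{x\otimes x}{R^2}+\frac{w}{R}\bigl(\Id-\frac{x\otimes x}{R^2}\bigr)$ a.e.\ on the annulus, and this coincides with the weak gradient there. Integrating $|\nabla y|^p=\bigl(w_R^2+(d-1)(w/R)^2\bigr)^{p/2}$ in polar coordinates and letting $a\to 0_+$ by monotone convergence gives \eqref{DYINTEGR}. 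For the "if" direction, assume $w$ is absolutely continuous and \eqref{DYINTEGR} holds. Part (i) together with the elementary bound $R^{d-1}\le\rho^p R^{d-1-p}$ for $0<R<\rho$ gives $y\in L^p_{loc}$, while the a.e.-defined field $F:=w_R\,\frac{x\otimes x}{R^2}+\frac{w}{R}\bigl(\Id-\frac{x\otimes x}{R^2}\bigr)$ lies in $L^p_{loc}$ because $|F|\lesssim|w_R|+|w|/R$. It then remains to show $F$ is the distributional gradient of $y$: away from the origin this is classical integration by parts, and to cross the origin I would insert a cutoff $\eta_\eps$ that vanishes on $B_\eps$, equals $1$ off $B_{2\eps}$, with $|\nabla\eta_\eps|\le C/\eps$; applying the off-origin identity to $\eta_\eps\phi$ and letting $\eps\to 0$ leaves the error term $\bigl|\int y\,\phi\,\partial_i\eta_\eps\bigr|\le\frac{C}{\eps}\|\phi\|_{\infty}\int_{\eps\le|x|\le2\eps}|y|\,dx$, and since $R^{d-1}|w(R)|\le 2\eps\,R^{d-1}|w(R)|/R$ on $(0,2\eps)$, Hölder bounds this by $C\,\eps^{d/p'}\bigl(\int_0^{2\eps}R^{d-1}|w/R|^p\,dR\bigr)^{1/p}\to 0$, since $d/p'\ge 0$ and the last integral is the tail of a convergent one. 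This proves $y\in W^{1,p}_{loc}$ and incidentally identifies its gradient.

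For (iii) I would estimate the oscillation of $w$ near the origin. Writing $|w(R)-w(r)|\le\int_r^R|w_s|\,ds$ and inserting the weight $s^{(d-1)/p}$, Hölder gives $|w(R)-w(r)|\le E(R)\bigl(\int_r^R s^{-(d-1)/(p-1)}\,ds\bigr)^{(p-1)/p}$ for $p>1$, where $E(R):=\bigl(\int_0^R s^{d-1}|w_s|^p\,ds\bigr)^{1/p}\to 0$ as $R\to 0_+$. A short case analysis then finishes: if $p<d$ the inner integral is $\sim c\,r^{(p-d)/(p-1)}$, whence $|w(r)|^p r^{d-p}\le(CE(R))^p+o(1)$ and letting $R\to 0_+$ yields \eqref{TRACE}; if $p>d$ the Cauchy criterion shows $w$ extends continuously to the origin and $R^{d-p}\to 0$; at the endpoint $p=d$ one uses in addition $\int_0^1|w|^d R^{-1}\,dR<\infty$ on dyadic intervals (together with the vanishing oscillation) to force $|w(R)|\to 0$; and the case $p=1$ is handled identically using $s^{1-d}\le r^{1-d}$ for $s\ge r$.

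Part (iv) then follows quickly: by (ii) with $p=1$, $w$ is absolutely continuous on $(0,\infty)$, so the classical formula $\nabla y=w_R\,\frac{x\otimes x}{R^2}+\frac{w}{R}\bigl(\Id-\frac{x\otimes x}{R^2}\bigr)$ holds a.e.\ on $\RR^d\setminus\{0\}$; since $y\in W^{1,1}_{loc}(\RR^d)$ its distributional gradient is an $L^1_{loc}$ field that agrees with this one off the single point $\{0\}$, hence a.e.\ on $\RR^d$, which is \eqref{GRADY}. The main obstacle is the step in (ii) showing that the point singularity at the origin contributes nothing to the distributional gradient — that is, that the weighted integrability \eqref{DYINTEGR} genuinely upgrades the off-origin Sobolev regularity to $W^{1,p}_{loc}(\RR^d)$; this is precisely the statement that a point has zero $W^{1,p}$-capacity for $d\ge 2$, and its quantitative form is the decay $\eps^{-1}\int_{B_{2\eps}}|y|\,dx\to 0$ used above. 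The case analysis in (iii) is the fiddliest but otherwise routine part.
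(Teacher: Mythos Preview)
Your proposal is essentially correct and more detailed than the paper's own exposition, but the methods you use for (iii) and for crossing the origin in (ii)/(iv) differ from the paper's.

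For (iii), the paper works directly with the integral identity
\[
w(R)\,R^{d-1}=\int_0^R\Big(w_\rho+(d-1)\tfrac{w}{\rho}\Big)\rho^{d-1}\,d\rho,
\]
obtained by differentiating $w(\rho)\rho^{d-2}$ and integrating. For $p=1$ the right-hand side is the tail of the convergent integral \eqref{DYINTEGR}, so $|w(R)|R^{d-1}\to 0$ drops out in one line; the paper then says the case $p>1$ is ``similar''. Your approach instead controls the oscillation $|w(R)-w(r)|$ by a weighted H\"older inequality and does a case split on the sign of $p-d$. Both are valid; the identity above is slicker because it uses the two pieces of \eqref{DYINTEGR} simultaneously and avoids the case analysis, whereas your route is the standard Morrey-type estimate. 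One genuine slip in your write-up: in the case $p>d$ you claim ``$R^{d-p}\to 0$'', but in fact $R^{d-p}\to\infty$ as $R\to 0^+$. What you need there is that the integrability of $R^{d-1}|w/R|^p$ forces $w(0)=0$, and then your H\"older estimate with $r=0$ gives $|w(R)|\le C\,E(R)\,R^{(p-d)/p}$, hence $|w(R)|^pR^{d-p}\le C\,E(R)^p\to 0$.

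For the removability of the origin, the paper argues (iv) by excising the ball $B_\eps$, integrating by parts on the annulus, and killing the resulting \emph{surface} term $\int_{\partial B_\eps}|y|\,dS\sim |w(\eps)|\eps^{d-1}$ using the trace decay (iii). You instead use a volume cutoff $\eta_\eps$ and kill the error via the $L^p$-integrability of $R^{d-1}|w/R|^p$ directly, without invoking (iii). Both are standard capacity-zero arguments; yours has the mild advantage of not relying on (iii), while the paper's makes the role of the trace condition \eqref{TRACE} explicit (this is the reason (iii) is stated at all).
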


For the proofs of (i), (ii) and (iv) we refer to \cite{Ball82}.

The proof of (iii) for $p=1$ goes as follows:
{Using the identity
$$
w(R) R^{d-2} = w(\rho) \rho^{d-2} + \int_\rho^R \Big (  w_s (s) s^{d-2} + (d-2) \frac{w(s) }{s} s^{d-2}  \Big ) ds \, ,
\quad \mbox{ for $0 < \rho < R$},
$$
we integrate over $(0,R)$ and use Fubini to obtain
\begin{equation}
\label{fundthcalc}
w(R) R^{d-1} = \int_0^R  \big ( w_\rho + (d-1) \frac{w}{\rho}  \big ) \rho^{d-1} d\rho
\end{equation}
This identity holds for smooth functions; then using a density argument one establishes the
identity for functions $w \in W^{1,1}_{loc} (\RR^d)$.

For $p=1$, since the integral in \eqref{DYINTEGR} is finite, we have
$$
F(R) := \int_0^R   \big ( w_\rho + (d-1) \frac{w}{\rho}  \big )  d\rho  \to 0  \quad \mbox{as $R \to 0$}
$$
(in fact the function $F(R)$ is absolutely continuous as function of $R$). Hence (iii) follows from \eqref{fundthcalc} for $p=1$.
}
The case $p > 1$ is done by a similar argument.

 In order  to compute the distributional derivative of $\nabla y$ in (iv),  one may follow  the usual process of
deleting a ball of small radius $\eps > 0$ around the origin, using the formula of integration by parts and passing to the limit $\eps \to 0$.
Then, the contribution from the surface of the ball will vanish precisely because of \eqref{TRACE} and thus no delta mass appears
in the formula \eqref{GRADY} for dimensions $d \ge 2$.

\subsection{Stored energies} \label{app1}

We collect here certain properties of the stored energies that are used throughout this study.
As already mentioned, frame indifference and isotropy are equivalent to expressing the stored energy as
\begin{equation*}
W(F) = \Phi(v_1,v_2,\dots,v_d)
\end{equation*}
where $ \Phi:\RR^d_{++} \to \RR$ is a symmetric function of its arguments and $v_1,\dots,v_d$ are the eigenvalues of
the positive square root $(F^{\T}F)^{\frac{1}{2}}$, the so called  principal stretches \cite{Antman, Tr}.

The stored energy $W(F)$ is said to be {\it rank-1 convex} if
\begin{equation}\label{R1CNV}
W(\tau F+(1-\tau)G ) \, \leq \, \tau  W(F) + (1-\tau) W(G),
\end{equation}
for $0<\tau<1$ and for $F, G \in \mddplus{d}$ such that $F-G= \xi  \otimes \nu$
for some nonzero $\xi, \nu \in \RR^d$. If the inequality in \eqref{R1CNV} is strict, then $W$ is called
{\it strictly rank-1 convex}.

It is easy to check that for $W\in C^2(\mddplus{d})$ rank-1 convexity is equivalent to the {\it Legendre-Hadamard
condition}, that is
\begin{equation*}
    \frac{\del^2 W(F)}{\del F_{i\alpha}\del{F_{j\beta}}} \xi^i \nu_{\alpha} \xi^j \nu_{\beta}  \geq 0 \, ,
    \quad  \mbox{$\forall \, F \in \mddplus{d}$ and $\forall \, \xi, \nu \in \RR^d - \{0\}$}.
\end{equation*}

For isotropic rank-1 convex functions, the stored energy $\Phi$ must satisfy certain monotonicity properties:

\begin{proposition}[J. Ball \cite{Ball82}]
\label{appball}
Let $W\in C^1(\mddplus{d})$ be strictly rank-1 convex and isotropic. Then:
\begin{itemize}
\item[$(i)$] $\frac{\del \Phi}{\del v_i}(v_1,\dots,v_d)$ is a strictly increasing function
of $v_i$  when  $v_j$, $j\neq i$ are kept fixed. If in addition $W\in C^2(\mddplus{d})$ then
$\frac{\del^2 \Phi}{\del v_i^2}(v_1,\dots,v_d)>0$.

\item[$(ii)$] The Baker-Ericksen inequalities hold, that is
\begin{equation}
\label{bakererick}
    \Bigg[\frac{v_i \frac{\del \Phi}{\del v_i} - v_j \frac{\del \Phi}{\del v_j}}{v_i - v_j} \Bigg] \, > \, 0
    \quad \mbox{for} \quad i \neq j, \quad v_i \neq v_j \,.
\end{equation}
    \end{itemize}
\end{proposition}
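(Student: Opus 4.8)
The plan is to use isotropy to reduce the statement to the behaviour of $\Phi$ along a few carefully chosen rank-one segments. The basic observation is that the restriction of $W$ to diagonal matrices $\mathrm{diag}(v_1,\dots,v_d)$ with $v_k>0$ equals $\Phi(v_1,\dots,v_d)$, so that (strict) rank-one convexity of $W$ becomes (strict) convexity of $\Phi$ along the image of any affine family of diagonal matrices whose increments are rank-one. Throughout I would use that $W(F)=\Phi(v_1,\dots,v_d)$ depends only on the singular values of $F$, hence $W$ is invariant under $F\mapsto QFR$ for all $Q,R\in O(d)$, while keeping in mind that the orientation constraint $\det F>0$ restricts the admissible perturbations.

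For part $(i)$ I would fix $(v_1,\dots,v_d)\in\RR^d_{++}$ and an index $i$ and consider the path $D(t):=\mathrm{diag}(v_1,\dots,v_i+t,\dots,v_d)$ for $t>-v_i$. Since $D(t_1)-D(t_2)=(t_1-t_2)\,e_i\otimes e_i$ has rank one and $\det D(t)>0$, strict rank-one convexity forces $t\mapsto W(D(t))=\Phi(v_1,\dots,v_i+t,\dots,v_d)$ to be strictly convex; hence $\Phi$ is strictly convex in $v_i$ with the other arguments frozen, so $\partial\Phi/\partial v_i$ is strictly increasing in $v_i$, and if $W\in C^2$ then $\partial^2\Phi/\partial v_i^2\ge 0$ (the strict inequality again requiring the sharper reasoning discussed below).

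For part $(ii)$ I would fix $i\ne j$ with $v_i>v_j$ and perturb only the $(i,j)$ off-diagonal slot, setting $F(\epsilon):=\mathrm{diag}(v_1,\dots,v_d)+\epsilon\,e_i\otimes e_j$. This leaves every diagonal entry and the determinant unchanged, so $F(\epsilon)\in\mddplus{d}$, and the two singular values $s_1(\epsilon)\ge s_2(\epsilon)$ of the $(i,j)$-block satisfy $s_1s_2=v_iv_j$ and $s_1^2+s_2^2=v_i^2+v_j^2+\epsilon^2$; since $v_i\ne v_j$ they are even and real-analytic near $\epsilon=0$, with $s_1(0)=v_i$, $s_2(0)=v_j$, $s_1'(0)=s_2'(0)=0$, $s_1''(0)=v_i/(v_i^2-v_j^2)$ and $s_2''(0)=-v_j/(v_i^2-v_j^2)$. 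Because $F(\epsilon)$ and $F(-\epsilon)$ have the same singular values while $F(\epsilon)-F(-\epsilon)$ is rank one, strict rank-one convexity applied at the midpoint $F(0)$ gives $\Phi(v_1,\dots,v_d)<\Phi(\dots,s_1(\epsilon),\dots,s_2(\epsilon),\dots)$ for every $\epsilon\ne0$ (symmetry of $\Phi$ makes the placement of $s_1,s_2$ irrelevant). Expanding the right-hand side to second order in $\epsilon$ using $\Phi\in C^1$ and the data above gives
\[
0\le \Phi\bigl(\dots,s_1(\epsilon),\dots\bigr)-\Phi(v_1,\dots,v_d)=\frac{\epsilon^2}{2(v_i^2-v_j^2)}\Bigl(v_i\,\tfrac{\partial\Phi}{\partial v_i}-v_j\,\tfrac{\partial\Phi}{\partial v_j}\Bigr)+o(\epsilon^2),
\]
so that $\bigl(v_i\,\partial\Phi/\partial v_i-v_j\,\partial\Phi/\partial v_j\bigr)/(v_i^2-v_j^2)\ge0$, which is the Baker--Ericksen quotient after dividing by $v_i+v_j>0$.

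The routine parts are the singular-value Taylor expansion and the first-order expansion of $\Phi$. The hard part will be twofold: first, honouring the constraint $\det F>0$, which is precisely why one cannot simply exchange two singular values and must instead use a determinant-preserving shear, forcing the singular values to move only quadratically in $\epsilon$; and second, upgrading the two non-strict inequalities just derived to the strict monotonicity of $\partial\Phi/\partial v_i$ and the strict Baker--Ericksen inequality, since passing to the limit $\epsilon\to0$ loses strictness. For that last step I would invoke the sharper argument of \cite{Ball82} rather than reconstruct it here.
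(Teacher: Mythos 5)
The paper cites this result to Ball \cite{Ball82} without reproducing the proof, so there is no in-text argument of the paper to compare against. Your approach --- restricting $W$ to diagonal matrices for part $(i)$ and to determinant-preserving shears for part $(ii)$, both of which produce rank-one increments --- is the natural and standard one, and your singular-value computation for the shear $F(\epsilon)=\mathrm{diag}(v_1,\dots,v_d)+\epsilon\,e_i\otimes e_j$ is correct: indeed $s_1''(0)=v_i/(v_i^2-v_j^2)$, $s_2''(0)=-v_j/(v_i^2-v_j^2)$, and the resulting second-order expansion gives $\tfrac{\epsilon^2}{2(v_i^2-v_j^2)}\big(v_i\,\partial\Phi/\partial v_i - v_j\,\partial\Phi/\partial v_j\big)+o(\epsilon^2)$. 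The first claim of $(i)$, strict monotonicity of $\partial\Phi/\partial v_i$ in $v_i$, is in fact fully established by your diagonal-path argument, since a $C^1$ strictly convex function of one variable automatically has a strictly increasing derivative; you need not be tentative about that piece.

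The gap you flag for the remaining strict inequalities is, however, genuine, and it is worth being precise about why it cannot be closed by the same device. Strict rank-one convexity in the chord sense defined in the paper does not imply the strong Legendre--Hadamard condition even for $C^2$ energies: the scalar model $g(t)=t^4$ is strictly convex yet $g''(0)=0$. Consequently the second-order limit $\epsilon\to0$ yields only $\partial^2\Phi/\partial v_i^2\ge 0$ and the non-strict Baker--Ericksen quotient $\ge 0$, and there is no way to upgrade to strict inequalities from this expansion alone --- the $\epsilon^2$-coefficient in $W(F(\epsilon))-W(F(0))$ can vanish while $\epsilon\mapsto W(F(\epsilon))$ remains strictly convex. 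Deferring the strict versions to \cite{Ball82} is therefore the right call, but you should note that the missing step is not a routine refinement of your Taylor argument: it requires either a genuinely different line of reasoning or a strengthening of the rank-one-convexity hypothesis to something like strong ellipticity along rank-one directions.
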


Throughout this study we work with stored energies of the special form
\begin{equation}\tag{H0}
\begin{aligned}
&\Phi (v_1, v_2, ... , v_d) =  \sum_{i =1}^d g(v_i) + h (v_1 v_2 \dots v_d)   \\
\end{aligned}
\end{equation}
where the functions $g(x)\in C^3[0,\infty)$ and $h(x)\in C^3(0,\infty)$.
{    One easily computes their derivatives,
\begin{align*}
\Phi_{11} &= g''(v_1) + (v_2 ... v_d)^2 h''(v)
\\
\Phi_{12} &= (v_3 ... v_d) h'(v) + v_1 v_2 (v_3 ... v_d)^2 h''(v)
\\
P = \Phi_{12} + \frac{\Phi_1 - \Phi_2}{v_1 - v_2} &= \frac{ g' (v_1) - g' (v_2)}{ v_1 - v_2} + (v_3 ... v_d) v h'' (v)
\\
\Phi_{111} &= g''' (v_1) + (v_2 ... v_d)^3 h''' (v)
\\
\Phi_{112} &= v_2 (v_3 ... v_d)^2 \big [ 2 h''(v) + v h''' (v) \big ] \, .
\end{align*}
where $v = v_1 v_2 ... v_d$.
}

Due to the form of the principal stretches for radial motions (see \eqref{GRADY}), in the problem of cavitation it is often needed to work for
$(v_1, ... , v_d)$ taking values of the form $(a, b, ..., b)$ or on the diagonal $(b, b, ... , b)$.
The symmetry  of $\Phi$ entails  certain properties on the diagonals:
\begin{align}
\label{phiprop1}
\frac{\del \Phi}{\del v_i} (a, b, ... , b) &= \frac{\del \Phi}{\del v_j} (a, b, ... , b) \quad \mbox{for $i , j =2 , ... d, \; i \ne j$, $\forall a, b > 0$}
\\
\label{phiprop2}
\frac{\del \Phi}{\del v_1} (b, b, ... , b) &= \frac{\del \Phi}{\del v_j} (b, b, ... , b) \quad \mbox{for $ j \ne1$, $\forall b > 0$}
\end{align}
When working with stored energies computed along the sets $(a,b, ... , b)$
we will often use the short hand notation
\begin{equation}
\label{aform1}
\begin{aligned}
\Phi_1  (a, b) &\equiv \frac{\del \Phi}{\del v_1} (a, b, ... , b) \, , \quad
\Phi_2  (a, b) \equiv \frac{\del \Phi}{\del v_j} (a, b, ... , b) \, , \quad  j = 2, ..., d
\\
\Phi_{11}  (a, b) &\equiv \frac{\del^2 \Phi}{\del v_1^2} (a, b, ... , b) \, , \quad
\Phi_{12} (a, b) \equiv \frac{\del^2  \Phi}{\del v_1 \del v_j } (a, b, ... , b) \, , \; j = 2, ..., d
\end{aligned}
\end{equation}
and so on.

The quantity
\begin{equation*}
P(a,b) :=
\begin{cases}
\Phi_{12}( a,b, ... , b) + \frac{(\Phi_1 - \Phi_2)(a,b, ... , b)}{a-b} & a<b\\
\Phi_{11}(b,b,\dots,b), & a=b
\end{cases}
\end{equation*}
appears in the defining differential equation \eqref{ssode}. Using \eqref{aform1} one checks that
\begin{equation}\label{PLIM}
    \lim_{\substack{a\to \lambda- \\ b\to \lambda+}} P(a,b) =
    \lim_{\substack{a\to \lambda- \\ b\to \lambda+}}
    \Phi_{11}(a,b,\dots,b)= \Phi_{11}(\lambda,\dots,\lambda)
\end{equation}
and thus  $P(a,b)$ is continuous up to the diagonal on the set $\{ (a,b) : 0 < a \le b \}$. Furthermore, using
\eqref{phiprop1}, \eqref{phiprop2} and Taylor expansions around the diagonal we easily see that
\begin{equation}\label{PMPHI11LIM}
\!\lim_{\substack{a\to \lambda- \\ b\to \lambda+}}
\frac{P(a,b)-\Phi_{11}(a,b)}{a-b}
 = \frac{1}{2} \Bigl( \Phi_{112}(\lambda,\lambda) - \Phi_{111}(\lambda,\lambda)\Bigr).
\end{equation}

We list some formulas based on \eqref{spseis} that are used in the text:
\begin{align*}
\Phi_{11}(a,b) &= g''(a) + b^{2d-2} h''(ab^{d-1})
\\
\Phi_{12}(a,b) &= b^{d-2} h'(ab^{d-1}) + a b^{2d-3} h''(ab^{d-1})
\\
P(a,b) = \Phi_{12} + \frac{\Phi_1 - \Phi_2}{a-b} &= \frac{ g' (a) - g' (b)}{ a - b} + ab^{2d-3} h'' (ab^{d-1})
\\
Q(a,b,s) =  s^2 - \Phi_{11} & =
 s^2 - \big[ g''(a) + b^{2d-2}h''\big(ab^{d-1})\big]\\
\Phi_{111}(a,b) &= g''' (a) + b^{3d-3} h'''(ab^{d-1})
\\
\Phi_{112}(a,b) & = b^{2d-3} \big [ 2 h''(ab^{d-1}) + ab^{d-1} h''' (ab^{d-1}) \big ] \\
  R(a,b,s)&=
\begin{cases}
\frac{\Phi_{1}(a,b)-
    \Phi_{1}(b,b)}{a-b} - s^2, \, &a<b\\
\Phi_{11}(b,b)-s^2, \, &a=b\\
\end{cases} \notag \\
&=
\begin{cases}
\Big(\frac{g'(a)-g'(b)}{a-b}\Big) + b^{2d-2} \Big(\frac{h'(ab^{d-1})-h'(b^d)}{ab^{d-1}-b^d}\Big) - s^2, \, &a<b\\
g''(b)+b^{2d-2}h''(b^d)-s^2, \, &a=b\,.\\
\end{cases}\notag
\end{align*}

\medskip
\noindent
{ {\bf List of Hypotheses}.
For the reader's convenience, we collect the hypotheses  used in the analysis of the dynamic bifurcation problem:
\begin{align}
&\Phi (v_1, v_2, ... , v_d) =  \sum_{i =1}^d g(v_i) + h (v_1 v_2 \dots v_d)   \,,\quad \, g\in C^3[0,\infty)\,, \, h  \in C^3(0,\infty) \tag{H0} \\
& g''(x)  > 0,  \quad  h''(x)  > 0   \, , \quad
 \lim_{x \to 0 } h(x) = \lim_{x \to \infty} h(x) = +\infty \tag{H1}\\
 &g'''(x) \leq  0,  \quad h'''(x) < 0 \tag{H2}\\
 &\lim_{x\to\infty} \bigg(\frac{g'(x)}{x^{d-2}}\bigg) = \gamma \geq 0\,\tag{H3}\\
 &h'(x) \to -\infty \; \;  \mbox{as} \quad x \to 0_+ \, ,
\qquad h'(x) \to +\infty \; \;  \mbox{as} \quad x \to +\infty   \tag{H4} \\
&\Phi_{11} (x, x) =  g''(x) + x^{2d -2}  h''( x^d ) \ge \nu^2 > 0   \,.\tag{H5}
\end{align}
 \eqref{spseis}-\eqref{GGROWTH} and \eqref{DHATINF}$_2$ play a role in the existence of a weak solution with cavity,
 while, in addition to them, \eqref{DHATINF}$_1$  and \eqref{D2HGROWTH} are used in
the dynamic bifurcation problem.
An  example of stored energy that satisfies \eqref{spseis}-\eqref{D2HGROWTH} is :
\hfil\break
\noindent{ Case $d\geq3$:} $g(x),h(x)$ in \eqref{spseis} are selected by
 \begin{equation}\label{GEX3d}
  g(x) = \sum_{k=1}^K a_{k} (x+\eps_k)^{\alpha_k} \quad \mbox{with} \quad  1< \alpha_k \leq 2\,, \,\, a_k,\eps_k>0\, ,
\end{equation}
and
\begin{equation}\label{HEX3d}
  h(x) = \sum_{m=1}^M b_{m} x^{\beta_m} + \sum_{n=1}^N c_n \SP x^{-\mu_n}\, \quad \mbox{with} \quad 1<\beta_m \leq 2\, , \,\, \mu_n>0\,, \,b_m,c_n >0\,.
\end{equation}

\noindent{ Case $d\geq2$:}  $g(x)$ is selected by
\begin{equation}\label{GEX2d}
  g(x) = ax + \sum_{k=1}^K \frac{b_k}{(x+\eps_k)^{\alpha_k}}\quad \mbox{with} \quad \alpha_k,\eps_k,b_k>0\,,\, a \geq 0\,
\end{equation}
while $h(x)$ is the same as in \eqref{HEX3d}.

We note that $\eps_k$, $k=1,\dots,K$, in \eqref{GEX3d} and \eqref{GEX2d},  are chosen positive in order to satisfy the requirement that $g\in C^3[0,\infty)$. Thus,  $g''(x)$ cannot blow up as $x \to 0_+$ in view of the requirement that $\eps_k>0$. This restricts the class of stored energies as compared to the class of Ogden materials \cite{OG72}; see also \cite[p.\SP 593]{Ball82}. Also we note that $h(x)$ defined in \eqref{HEX3d} satisfies $\limsup_{x \to \infty } h''(x^d)x^{2d-2} > 0$ for $d \geq 2$ and this together with the fact that $g''(x)>0$, $x\in[0,\infty)$, gives \eqref{D2HGROWTH}.

In section \ref{STATCONNECT}, we also used the hypotheses
\begin{align}
&\frac{d}{dx}\big(h'(x^d)+g'(x)x^{1-d}\big)>0, \quad \limsup_{x \to 0_+} (h'(x)x) < 0\tag{H6}\,\\
& \big(g'(x)x\big)' \, > 0 \tag{H7} \,  \\
& \big( g''(x)x \big)' \, \geq 0  \tag{H8}
\end{align}
These play a very limited role, solely in establishing  bounds for the critical stretching $\lambda_{cr}$ of the equilibrium elasticity critical
stretching. Namely,  \eqref{chiMONT} and  \eqref{BE} are used for
obtaining the bound \eqref{critstretch1}, while \eqref{DDGX} is used in the derivation of the lower bound \eqref{critstretch2}.
 For stored energies of class \eqref{spseis}, \eqref{BE} expresses the Baker-Ericksen inequality \eqref{bakererick}.
}

\subsection{Numerical computations} \label{app3}

{
We now briefly discuss the numerical computation used to plot the graphs  of $v(s \SP ; \varphi_0, H)$ in Fig.\SP\ref{DETPIC} and the bifurcation curves
in Fig.\SP\ref{DYNSTAT}.
To obtain the solution $(\varphi,v)(s \SP; \varphi_0,v_0)$  of  \eqref{ODEIVPSYS} we perform numerical computations employing the original (equivalent) system \eqref{ssys}.
As \eqref{ssys} has a geometric singularity at the origin, we  initiate the solution using an analytical argument to depart from the singularity at $s=0$,
and once we are off the singularity we continue by using a numerical solver. Below is the explanation of the approach used.

\par\smallskip

If  $(\varphi,v)(s \SP; \varphi_0,v_0)$ solves \eqref{ODEIVPSYS} then
\begin{equation*}
 (a,b)(s \SP; \varphi_0,v_0) = (\dot{\varphi},\tfrac{\varphi}{s})(s \SP; \varphi_0,v_0)
\end{equation*}
solves \eqref{ssys} and satisfies
\begin{equation*}
\lim_{s\to 0_+} s b(s) = \varphi_0\,, \quad \lim_{s\to 0_+} (a b^{d-1})(s) =v_0\,.
\end{equation*}
Moreover, in view of \eqref{GGROWTH},
\begin{equation*}\label{DETLIMZERO}
c_0:=\lim_{s\to 0_+} \frac{d}{ds} \bigl( \dot{\varphi}
(\tfrac{\varphi}{s})^{d-1} \bigr) = \lim_{s\to 0_+} \dot{v}(s)  = \frac{(d-1)\gamma_0}{\varphi_0 h''(v_0)}\,,\quad \gamma_0=\left\{
\begin{aligned}
\gamma\,, \quad & d \geq 3\\
\gamma-g'(0), \quad  & d=2\,.
\end{aligned}\right.
\end{equation*}
Thus
\begin{equation}\label{DPHIDNEARZERO}
\frac{d}{ds}\big(\varphi^d(s)\big) = d \big[\dot{\varphi}\varphi^{d-1}\big](s)= d\big(v_0 s^{d-1} + c_0 s^d + o(s^d)\big)
\quad \mbox{for $s << 1$}
\end{equation}
and hence
\begin{equation}\label{PHINEARZERO}
    \varphi(s)=\sqrt[d]{\varphi_0^d+v_0 s^d+\Big(\frac{d}{d+1}\Big)c_0 s^{d+1}
    + o(s^d)}  \quad \quad \mbox{for $s << 1$}.
\end{equation}

\par\smallskip

Given $\varphi_0,v_0>0$, we construct the solution $(\varphi,v)(s \SP; \varphi_0,v_0)$ as follows:  We pick  a sufficiently small $s_0>0$
and select the approximate values at $s=s_0>0$ (following \eqref{DPHIDNEARZERO}, \SP \eqref{PHINEARZERO}) by
\begin{equation*}\label{IDATANEARZERO}
\begin{aligned}
    \widehat{b}(s_0)&=\frac{1}{s_0}\sqrt[d]{\varphi_0^d+v_0 s_0^d+\Big(\frac{d}{d+1}\Big)c_0 s_0^{d+1}}\\[3pt]
      \widehat{a}(s_0)&=\big(v_0 s^{d-1} + c_0 s^d \big)\big(s_0 \widehat{b}(s_0)\big)^{1-d}\,.
\end{aligned}
\end{equation*}
We use these as initial data at $s_0$ and then solve numerically \eqref{ssys} (using the standard MATLAB solver \texttt{ode15})
on the interval $[s_0,\widehat{T})$, where $\widehat{T} $ is the  maximal interval of existence of the approximate solution
$(\widehat{a},\widehat{b})$. At $s=\widehat{T}$ computations break down due to the singularity $Q=0$.

\par\smallskip

We construct the dynamic bifurcation curve in Fig.\SP 2 as follows. For a stored energy $\Phi$ with $g(x)=\frac{1}{2}x^2$, $h(x)=(x-1)\ln(x)$,
we fix $v_0=H$, where $H>0$ is the unique number that satisfies $h'(H)=0$  corresponding to a stress free cavity.
Then, we pick $\varphi_0\in [0.05,2.7]$ and $s_0>0$ and compute the numerical solution
\begin{equation*}\label{NSOL}
 \{(\widehat{a}_n,\widehat{b}_n)(s_0,\varphi_0,H)\}_{n=0}^N \,, \quad \mbox{on the mesh} \quad s_0<s_1<\dots<s_N=\widehat{T}\,,
\end{equation*}
that approximates $(\widehat{a},\widehat{b})(s \SP; s_0,\varphi_0,H)$ as described in the previous paragraph.
Finally, we determine the point $s_{n_*} \in (s_0,s_N]$ that best fits the condition
\begin{equation*}
  s_{n_*} \approx \sqrt{ \frac{\Phi_1( \widehat{a}_{n_*}, \widehat{b}_{n_*} )-
    \Phi_1(\widehat{b}_{n_*}, \widehat{b}_{n_*} )}{\widehat{a}_{n_*}- \widehat{b}_{n_*} } } \; \; \approx \sigma (\varphi_0, V(\varphi_0))\,,
\end{equation*}
which corresponds to the Rankine-Hugoniot condition. This in turn provides the approximate value of the stretching
\begin{equation*}
\Lambda(\varphi_0,H) \approx \widehat{b}_{n_*}(s_0,\varphi_0,H) \,.
\end{equation*}
This  procedure is repeated for a sequence of values $\varphi_0$ values in the interval $[0.05,2.7]$ and gives the dynamic bifurcation curve in Fig.\SP 2.

\par\smallskip

The bifurcation curve for elastostatics (corresponding to the boundary value problem \eqref{STATRPDE} in Section \ref{STATCONNECT})
is constructed in an analogous fashion. The only difference is that  solutions are now computed for the modified system \eqref{ssys}
(obtained by replacing the term $s^2-\Phi_{11}$ in \eqref{ssys} by the term $-\Phi_{11}$) on the interval $(s_0,1]$.
The value $\widehat{b}(1\SP; s_0,\varphi_0,H)$ then gives the stretching $\lambda(\varphi_0,H)$ at the boundary of the unit ball.
The curve in Fig.\SP 2 is the graph of $\lambda(\varphi_0,H)$ with $\varphi_0\in[0.05,2.7]$.
}

\end{document}